\newtheorem{theorem}{Theorem}[section]
\newtheorem{lemma}[theorem]{Lemma}
\newtheorem{proposition}[theorem]{Proposition}
\newtheorem{corollary}[theorem]{Corollary}
\newtheorem{Problem}[theorem]{Problem}
\newtheorem{remark}[theorem]{Remark}
\theoremstyle{definition}
\numberwithin{equation}{section}
\DeclareMathOperator{\Ker}{{\mathrm{Ker}}}
\DeclareMathOperator{\image}{{\mathrm{Im}}}
\DeclareMathOperator{\soc}{{\mathrm{soc}}}
\DeclareMathOperator{\rad}{{\mathrm{rad}}}
\newcommand{\Char}{{\mathrm{char}}}
\newcommand\triv{{\bf 1}}
\newcommand\Specht{{\mathcal {S}}}
\DeclareMathOperator{\Hom}{{\mathrm{Hom}}}
\DeclareMathOperator{\End}{{\mathrm{End}}}
\DeclareMathOperator{\Ind}{{\mathrm{Ind}}}
\DeclareMathOperator{\Res}{{\mathrm{Res}}}
\DeclareMathOperator{\IBR}{{\mathrm{IBr}}}
\DeclareMathOperator{\Sym}{{\mathrm{Sym}}}
\DeclareMathOperator{\Alt}{{\mathrm{Alt}}}
\newcommand{\CL}{{\mathcal C}}
\newcommand{\SCL}{{\mathcal S}}
\newcommand{\F}{{\mathbb {F}}} 
\newcommand{\Z}{{\mathbb {Z}}} 
\newcommand{\C}{{\mathbb {C}}}
\newcommand{\K}{{\mathbb {K}}}
\newcommand{\MB}{\mathbf{M}} 
\newcommand{\bfr}{\mathfrak{b}} 
\newcommand{\la}{\langle} 
\newcommand{\ra}{\rangle}
\newcommand{\SSS}{{\mathsf {S}}}
\newcommand{\AAA}{{\mathsf {A}}}
\DeclareMathOperator{\rank}{{\mathrm {rank}}} 	
\DeclareMathOperator{\Stab}{{\mathrm {Stab}}} 
\DeclareMathOperator{\sign}{{\mathrm {sign}}}
\DeclareMathOperator{\sgn}{{\mathrm {sgn}}} 	
\newdimen\hoogte    \hoogte=12pt
\newdimen\breedte   \breedte=14pt
\newdimen\dikte     \dikte=0.5pt
\newenvironment{Young}{\begingroup
       \def\vr{\vrule height0.89\hoogte width\dikte depth 0.2\hoogte}
       \def\fbox##1{\vbox{\offinterlineskip
                    \hrule height\dikte
                    \hbox to \breedte{\vr\hfill##1\hfill\vr}
                    \hrule height\dikte}}
       \vbox\bgroup \offinterlineskip \tabskip=-\dikte \lineskip=-\dikte
            \halign\bgroup &\fbox{##\unskip}\unskip  \crcr }
       {\egroup\egroup\endgroup}
\def\diagram#1{\relax\ifmmode\vcenter{\,\begin{Young}#1\end{Young}\,}\else%
              $\vcenter{\,\begin{Young}#1\end{Young}\,}$\fi}
\begin{document}
\title[Representations of the alternating group]
{Representations of the alternating group which are irreducible over subgroups. 
II}
\author{\sc Alexander Kleshchev}
\address
{Department of Mathematics\\ University of Oregon\\
Eugene\\ OR~97403, USA}
\email{klesh@uoregon.edu}
\author{\sc Peter Sin}
\address{Department of Mathematics\\University of Florida\\ 
Gainesville
FL 32611-8105\\ USA}
\email{sin@ufl.edu}
\author{\sc Pham Huu Tiep}
\address
{Department of Mathematics\\
University of Arizona\\ Tucson\\ AZ~85721, USA} 
\email{tiep@math.arizona.edu}
\date{}
\thanks{2000 {\em Mathematics Subject Classification:} 20C20, 20C30, 20B35, 20B15.\\
\indent The first author was supported by the NSF (grant DMS-1161094) and the Humboldt Foundation. The second author was partially supported by the Simons Foundation 
(grant \#204181). The third author was supported by the NSF 
(grants DMS-0901241 and DMS-1201374). 
}


\begin{abstract}
We prove that non-trivial representations of the alternating group 
$\AAA_n$ are reducible over a primitive proper subgroup which is isomorphic to some alternating group $\AAA_m$. A similar result is established for finite simple classical groups embedded in
$\AAA_n$ via their standard rank $3$ permutation representations. 
\end{abstract}

\maketitle
\section{Introduction}
If $\Gamma$ is a transitive permutation group with a point stabilizer $X$ then $\Gamma$ is primitive if and only if $X<\Gamma$ is a maximal subgroup. So studying primitive permutation groups is equivalent to studying maximal subgroups.
In most problems
involving a finite primitive group $\Gamma$, the Aschbacher-O'Nan-Scott theorem \cite{AS} allows one to concentrate on the 
case where $\Gamma$ is almost quasi-simple, i.e. $L\lhd \Gamma/Z(\Gamma)\leq \operatorname{Aut}(L)$ for a non-abelian simple group $L$. 
The results of 
Liebeck-Praeger-Saxl \cite{LPS} and Liebeck-Seitz \cite{LiS} then allow one to 
assume furthermore that $\Gamma$ is a finite classical group. 

In the latter case, the
possible structure of maximal subgroups $X$ is described by 
Aschbacher's theorem \cite{A}: if $X<\Gamma$ is maximal then $X$ belongs to
\begin{equation}\label{EM}
\SCL\cup \bigcup^{8}_{i=1}\CL_{i},
\end{equation}
where $\CL_{1},\dots,\CL_{8}$ are
collections of certain explicit natural subgroups of $\Gamma$, and $\SCL$ is the collection
of almost quasi-simple groups that act absolutely irreducibly on the natural module for the classical group $\Gamma$. 

It is not true, however, that every subgroup $X$ in (\ref{EM}) is actually maximal in
$\Gamma$. For $X \in \cup^{8}_{i=1}\CL_{i}$, the maximality of $X$ has been determined by
Kleidman-Liebeck \cite{KL}. 
So let $X \in \SCL$. If $X$ is {\em not} maximal then $X < G < \Gamma$ for a certain maximal subgroup  $G$ in $\Gamma$. The most challenging case to handle is when $G \in \SCL$ as well. This motivates the following problem, where $\F$ is an algebraically closed
field of characteristic $p \geq 0$: 

\begin{Problem}\label{restr}
Classify all triples $(G,V,X)$ where 
$G$ is an almost quasi-simple finite group, $V$ 
is an $\F G$-module of dimension greater than one, and $X$ is a proper subgroup of $G$ such that 
the restriction $\Res^G_X V$ is irreducible.
\end{Problem}

Many results have been obtained concerning various cases of Problem
\ref{restr} --- see for instance  \cite{KT2} and references therein. In this paper, we are mostly
interested in the case $G$ is is the alternating group $G=\AAA_n$ or the symmetric group $\SSS_n$. In this case, under the 
assumption $p > 3$ (or $p = 0$), Problem \ref{restr} has been solved in \cite{BK,KS2} --- see also \cite{KT1} for double-covers $\hat\AAA_n$ and $\hat\SSS_n$ and \cite{Saxl,KW} for the characteristic zero case. 
A number of techniques employed in these papers unfortunately break down in 
the case $(G,X) = (\AAA_n,\AAA_m)$ and $p = 2,3$ (and especially when
$X$ is a primitive subgroup of $G$). On the other hand, this case is
of crucial importance in a number of applications. The purpose of this paper
is to resolve this important case,  and our main result is:

\begin{theorem}\label{main}
Let $X \cong \AAA_m$ be a primitive subgroup of $\AAA_n$ with $n > m \geq 9$.
Let $\F$ be an algebraically closed field of arbitrary characteristic and  $V$ be a non-trivial $\F\AAA_n$-module. Then $V$ is reducible over $X$.
\end{theorem}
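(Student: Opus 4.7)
The plan is to suppose for contradiction that $V$ is a non-trivial irreducible $\F\AAA_n$-module whose restriction $V|_X$ is irreducible; this reduction is justified because for any proper submodule $W\subsetneq V$, $W|_X$ is a proper $\F X$-submodule of $V|_X$. The primitive embedding $X=\AAA_m\hookrightarrow\AAA_n$ corresponds to a core-free maximal subgroup $H<\AAA_m$ of index $n$; by the O'Nan--Scott theorem applied to $\AAA_m$ (with $m\ge 9$), the possibilities for $H$ are (i) intransitive, $H=(\SSS_k\times\SSS_{m-k})\cap\AAA_m$ acting on $k$-subsets, $n=\binom{m}{k}$, $2\le k\le m/2$; (ii) imprimitive, $H=(\SSS_a\wr\SSS_b)\cap\AAA_m$, $ab=m$, $a,b>1$; and (iii) primitive of affine, diagonal, product, or almost-simple type. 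These three families will be treated separately.

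The first tool is a dimension comparison. Irreducibility of $V|_X$ gives $\dim V\le M_p(m)$, the maximum dimension of an irreducible $\F\AAA_m$-module, while the classical lower bound yields $\dim V\ge n-2$ for any non-trivial $V\in\IBR(\AAA_n)$. In families (ii) and (iii), and in (i) once $k$ exceeds an explicit threshold, $n-2$ dominates $M_p(m)$ (quickly in characteristic zero via $M_0(m)\le\sqrt{m!}$, with analogous modular bounds), and one obtains a contradiction directly.

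The main obstacle is the intransitive case with $k$ small, above all $k=2$, where $n=\binom{m}{2}$ is polynomial in $m$ and dimension bounds alone are too coarse. Here I would perform a two-sided analysis of the stabiliser $Y:=X\cap\AAA_{n-1}=(\SSS_k\times\SSS_{m-k})\cap\AAA_m$. From inside $X=\AAA_m$, $V|_Y$ is described by the branching $\AAA_m\downarrow Y$ together with modular Littlewood--Richardson coefficients. From inside $\AAA_n$, Kleshchev's modular branching rule controls $V|_{\AAA_{n-1}}$ (hence $V|_Y$) in terms of good/cogood-node removals on the partition $\lambda$ labelling $V$, and the Jantzen--Seitz criterion sharply restricts $\lambda$ when $V|_{\SSS_{n-1}}$ is close to irreducible. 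Matching the composition factors of $V|_Y$ computed the two ways forces combinatorial constraints on $\lambda$ that are incompatible with $\dim V\le M_p(m)$.

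The argument is completed using Brauer character values at specific elements of $X$. A $3$-cycle $g\in X$ has explicit cycle type on the $n$ points (for $k=2$, type $(3^{m-2},1^{\binom{m-3}{2}})$), so the Murnaghan--Nakayama formula in $\AAA_n$ pins the trace of $g$ on $V$ in terms of $\lambda$, and this value must coincide with that of an irreducible Brauer character of $\AAA_m$; iterating with a double transposition produces an over-determined system. The principal technical difficulty is characteristic $p\in\{2,3\}$, where several tools of \cite{BK,KS2,KT1} (the natural module $D^{(n-1,1)}$, sharp branching, vanishing of certain self-extensions) degenerate; here we expect to need finer modular branching results and explicit $\Ext^1$-computations, supplemented by direct case checks for the smallest base alternating groups.
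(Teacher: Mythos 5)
The central difficulty of the theorem is the case $p=2$ with $X$ embedded via its action on $k$-subsets with small $k$ (and via set partitions), and your proposal does not contain a viable method for it. Two concrete problems:

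\textbf{The dimension comparison in families (ii) and (iii) fails.} You claim that in the partition case and the primitive-stabiliser case the inequality $n-2 > M_p(m)$ holds, with $M_p(m)\le\sqrt{m!/2}$. This is false asymptotically for both families. For (ii) with $b=2$, $n=\binom{m}{m/2}/2\sim 2^m/\sqrt{\pi m}$, while $\sqrt{m!/2}\sim (m/e)^{m/2}$ grows much faster. Even in (iii), Bochert only gives $n\ge\lfloor(m+1)/2\rfloor!/2\sim(m/2e)^{m/2}$, which is $(1/\sqrt 2)^m$ times $\sqrt{m!}$. So a contradiction does not follow directly; one needs the much stronger lower bound $\dim V\ge 2^{(n-6)/4}$ valid only for $\F\AAA_n$-modules \emph{not extending} to $\SSS_n$ (Proposition~\ref{ext1} of the paper), which is enough to force $V$ to extend to $\SSS_n$ when $n\ge m(m-1)/2$. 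Even after that reduction, case (ii) (action on partitions) requires the full machinery discussed below, and case (iii) requires reducing to $V\cong D^{(n-1,1)}$ or $D^{(n-2,2)}$ via James's degree bound and arguing via the permutation character. Nothing resembling ``a contradiction directly''.

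\textbf{The Murnaghan--Nakayama step breaks in positive characteristic.} The Murnaghan--Nakayama rule computes ordinary character values of $\SSS_n$, not Brauer character values, and the hardest case here is $p=2$. Brauer characters of symmetric groups are not given by any known rim-hook combinatorics on $\lambda$ --- they are only accessible through the (largely unknown) decomposition matrix. So ``the Murnaghan--Nakayama formula in $\AAA_n$ pins the trace of $g$ on $V$'' has no content precisely where you need it, and your ``over-determined system'' does not materialise. Similarly, the Jantzen--Seitz criterion governs when $D^\lambda|_{\SSS_{n-1}}$ is irreducible, but the subgroup $Y=X\cap\AAA_{n-1}$ is not well-aligned with $\AAA_{n-1}$; you give no mechanism for converting $\AAA_{n-1}$-branching information into constraints on $V|_Y$. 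Your closing sentence openly defers the entire $p\in\{2,3\}$ case to unspecified ``finer modular branching results and explicit $\Ext^1$-computations''; but that case is essentially the whole theorem, since $p\ge 5$ is handled by earlier work of Brundan--Kleshchev and Kleshchev--Sheth.

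The paper's route for $p=2$, $2\mid n$ is genuinely different and is the crux: it compares $d_r(V):=\dim\Hom_{\F\SSS_n}(M_r,\End_\F V)$ for the permutation modules $M_1$ and $M_3$ on $1$- and $3$-subsets (Theorem~\ref{hom}: $d_3(V)>d_1(V)$ unless $V$ is trivial or basic spin), determines the precise socle-series structure of $M_2$, $M_3$ over $\F_2\SSS_n$ (Section~5), and combines these into a reduction theorem (Theorem~\ref{reduction}): $\Res^{\SSS_n}_X V$ is reducible whenever $X$ is perfect, transitive, has at least three orbits on $2$-subsets or nonzero invariants in $\Specht_2$, and has ``enough'' extra orbits on $3$-subsets (the quantities $e_3(X), h(X)$). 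Verifying this combinatorial criterion for $\AAA_m$ acting on $k$-subsets and partitions, plus a separate low-$m$ check and the primitive-stabiliser bound, completes the proof. No branching-rule matching or character-value computation is needed, and there is no explicit $\Ext^1$ computation in the sense you anticipate. You would need to either discover this permutation-module mechanism or supply an alternative of comparable power; the sketch as written does not.
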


The bound $m\geq 9$ is the best possible --- see Remark~\ref{RSmallCases} and Lemma \ref{small}. 

We emphasize that our methods also apply to many other primitive subgroups of $\AAA_n$. To illustrate this, in this paper we handle the simple classical groups $X$ that embed in $\AAA_n$ via their standard rank $3$ permutation representations:
\begin{theorem}\label{r3-main}
Let $X = L/Z(L)$ be a finite simple classical group, where $L$ is one of the following 
group: $SL_d(q)$, $SU_d(q)$, or $Sp_d(q)'$ with $d \geq 4$, and $\Omega^\pm_d(q)$ with 
$d \geq 5$. Let $W$ denote the natural $d$-dimensional module for $L$, and let $X$ be embedded
in $\Sym(\Omega) = \SSS_n$ via its rank $3$ permutation action on the set $\Omega$ of 
$2$-dimensional subspaces of $W$ in the case $L = SL_d(q)$, and of $1$-dimensional singular 
subspaces of $W$ otherwise. If $V$ is any $\F\AAA_n$-module of dimension $>1$, then 
$\Res^{\AAA_n}_{X}V$ is reducible. 
\end{theorem}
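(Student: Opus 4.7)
The plan is to combine a crude dimension lower bound (which eliminates ``generic'' $\lambda$) with a finer analysis exploiting the rank $3$ hypothesis (which eliminates the finite list of remaining $\lambda$). We may assume $V = D^\lambda$ is an irreducible $\F\AAA_n$-module with $\lambda$ a partition of $n$ other than $(n)$, since reducible $V$ are trivially reducible on restriction. The partitions are stratified by how far $\lambda$ is from $(n)$: the farther, the faster $\dim D^\lambda$ grows.

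\textbf{Reducing to a finite list of small partitions.} Let $b(X)$ denote the maximum dimension of an irreducible $\F X$-module. For a simple classical group $X = L/Z(L)$ of natural dimension $d$ over $\F_q$, $b(X)$ is bounded polynomially in $q^d$, while the rank $3$ degree $n = |\Omega|$ is itself a $q$-binomial (or the count of singular points), so $b(X)$ is polynomial in $n$ of relatively small degree. On the other hand, the standard lower bounds on $\dim D^\lambda$ in terms of the ``depth'' of $\lambda$ (via hook length formulas in characteristic zero and Jantzen--Seitz / James--style bounds in the modular case, adjusted by the Mullineux involution) grow super-polynomially in $n$ once $\lambda$ differs from $(n)$ in more than a few cells. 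Comparing the two bounds reduces the problem to a short explicit list of ``small'' partitions, typically $(n-1,1), (n-2,2), (n-2,1^2), (n-3,3), (n-3,2,1), (n-3,1^3)$ and their conjugates $\lambda'$.

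\textbf{Exploiting rank $3$.} The rank $3$ hypothesis is equivalent to $X$ having three orbits on $\Omega\times\Omega$, and this forces the permutation module $\F\Omega$, as an $\F X$-module, to have at most $3$ composition factors counted with multiplicity. Analogously, small orbit counts for $X$ on $k$-tuples of distinct elements (for $k=2,3$) bound the number of composition factors of the restrictions of the Young permutation modules $M^{(n-2,2)}$, $M^{(n-2,1^2)}$, $M^{(n-3, 3)}$, etc.\ to $\F X$. Since each of the small partitions from the list above indexes a composition factor of one of these $M^\mu$, the restrictions $D^\lambda|_X$ have only a bounded number of constituents. To upgrade ``few'' to ``more than one'', we identify the possible irreducible summands of $\F\Omega|_X$ whose dimensions are explicit $q$-analog expressions in the rank $3$ parameters and verify that $\dim D^\lambda$ never coincides with any single one of them for $d,q$ in our range. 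The same idea, combined with the explicit structure of $\text{Sym}^2\F\Omega$ and $\wedge^2\F\Omega$ as $\F X$-modules, handles $\lambda\in\{(n-2,2),(n-2,1^2)\}$ and the partitions of depth $3$.

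\textbf{Main obstacle.} The main technical difficulty will be the modular cases, especially $p$ dividing $|X|$ or small values of $p$. Here $\F\Omega|_X$ fails to be semisimple, the trivial constituent can appear with multiplicity greater than one, and $D^\lambda$ ceases to coincide with the Specht module $S^\lambda$; one must carefully track composition factor multiplicities using orbit counts rather than character sums, and adapt the dimension comparison to the modular decomposition matrices of the small classical groups involved. A secondary obstacle is the handful of boundary cases of small $(d,q)$ where the generic dimension pruning of Step~2 is not strong enough to cut the list of candidate partitions down to the short list above; these will require individual verification, most likely via explicit (Brauer) character tables of the relevant classical groups, and will form the bulk of the case analysis in the proof.
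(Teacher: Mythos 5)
Your dimension-pruning step broadly matches the paper's opening move, but it is employed differently: the paper compares $\dim V$ with $|X|^{1/2}$ to obtain $\dim V<2^{(n-8)/4}$, which by Propositions~\ref{ext1} and \ref{ext2} forces $V$ to extend to $\SSS_n$, and then cites the earlier results of Kleshchev--Sheth to dispose of $p=3$ and $p=2\nmid n$ entirely, leaving only $p=2\mid n$. It does \emph{not} reduce to a short explicit list of ``small'' partitions; indeed your claim that $b(X)$ is polynomial in $n$ of uniformly bounded degree is not quite accurate (for $SL_d(q)$ the exponent grows with $d$), so the list of surviving partitions would grow with $d$. More importantly, the heart of your proposal --- counting composition factors of $\F\Omega|_X$, $\Sym^2\F\Omega|_X$, $\wedge^2\F\Omega|_X$ and matching dimensions against $\dim D^\lambda$ --- is a genuinely different, and much heavier, path than the paper takes. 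You correctly flag the modular structure of $\F\Omega|_X$ as the main obstacle, but you leave it unresolved; the paper is designed precisely to sidestep it.

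The device you are missing is the Hom-space comparison and the reduction theorem (Theorems~\ref{hom} and \ref{reduction}). Writing $d_r(V)=\dim\End_{\F\SSS_{n-r,r}}(\Res\,V)$, the paper proves that for $p=2\mid n$ every irreducible $\F\SSS_n$-module other than the trivial and the basic spin satisfies $d_3(V)>d_1(V)$; the small dimension bound already excludes basic spin. Theorem~\ref{reduction} then asserts that for any perfect transitive $X\leq\SSS_n$ with $e_3(X)\geq h(X)+1$ and $\Specht_2^X\neq 0$, the module $V$ is reducible over $X$. The hypotheses involve only orbit counts of $X$ on $1$-, $2$-, $3$-element subsets of $\Omega$ (the $f_r(X)$, $e_r(X)$) and $\dim H^1(X,Q)$ for $Q$ a quotient of the natural permutation module --- i.e.\ essentially $\dim\Hom(X_1,\F)$ for a point stabilizer plus the Schur multiplier. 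These are elementary permutation-theoretic and group-cohomological invariants that the paper computes in Lemmas~\ref{r3-orbits} and \ref{r3-h1} with no modular decomposition matrices whatsoever. Your plan, by contrast, would force you into exactly the modular decomposition of rank $3$ permutation modules of classical groups in characteristics $2$ and $3$, which is hard and case-dependent; without Theorem~\ref{reduction} or a replacement for it, the proposal does not close.
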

We plan to extend  this result to the remaining simple primitive subgroups of $\AAA_n$ in a sequel. 
Together with the results of \cite{BK, KS2} and the current paper, this will completely solve 
Problem \ref{restr} for $G = \AAA_n$ in the cases that are of most interest for the 
Aschbacher-Scott program.

\smallskip
The paper is organized as follows. Basic notions are recalled in \S2. 
Theorem \ref{hom} in \S3 compares the dimensions of the Hom-spaces of irreducible 
$\SSS_n$-modules in characteristic $2$ over certain Young subgroups of $\SSS_n$
when $n$ is even. 
Then Propositions \ref{ext1}, \ref{ext2}, and  \ref{ext3} in \S4   
show in particular that the $p$-modular irreducible representations of $\AAA_n$ which
do not extend to $\SSS_n$ must have large enough dimension (at least exponential
in $n$). These results, which we believe are also of independent interest, allow
us to discard non-$\SSS_n$-extendible $\AAA_n$-modules in the proof
of Theorem \ref{main}. In \S5
we describe the submodule structure of the permutation modules of $\SSS_n$ 
acting on subsets of $\{1,2, \ldots ,n\}$ of cardinality $2$ or $3$ in characteristic $2$,
again in the case of even $n$. This description plays a key role in the proof of 
Theorem \ref{reduction} in \S6, which gives a criterion for a
$2$-modular irreducible $\SSS_n$-representation to be reducible over certain subgroups of
$\SSS_n$.  Theorem \ref{reduction} is then used in \S7 to show that non-trivial
$2$-modular irreducible $\AAA_n$-representations are reducible 
over $\AAA_m$, if $\AAA_m$ 
is embedded into $\AAA_n$ via its actions on subsets or set partitions of 
$\{1,2, \ldots ,m\}$ --- see Theorem \ref{main-alt1}. Theorem \ref{main} is proved
in \S8, which also contains further results concerning non-primitive embeddings of
$\AAA_m$ into $\AAA_n$.  The final \S9 is devoted to the proof of Theorem \ref{r3-main}.

\section{Preliminaries}\label{SPrel}
Throughout the paper, unless otherwise stated, we assume that the ground field $\F$ is algebraically closed, and  $p:=\Char(\F)$. For a group $G$, the trivial $\F G$-module is denoted $\triv_G$ or simply $\triv$ if it is clear what $G$ is.  If $V$ is an $\F G$-module, we denote by $\soc(V)$ the socle of $V$, and for $n=1,2,\dots$, define $\soc^n(V)$ from $\soc^1(V)=\soc(V)$ and $\soc^n(V)/\soc^{n-1}(V)=\soc(V/\soc^{n-1} (V))$ for $n>1$. We refer to the quotients 
$\soc^n(V)/\soc^{n-1}(V)$ as the {\em socle layers} of $V$ and usually list them from bottom to top, i.e. first $\soc(V)$, then $\soc^2(V)/\soc(V)$, etc.

For $n\in\Z_{>0}$, let 
$$\Omega := \{1,2,\ldots, n\}.$$
For $r=1,\dots,n$, denote by $\Omega_r$ the set of $r$-element subsets of $\Omega$. 
The symmetric group $\SSS_n$ acts naturally on the sets $\Omega=\Omega_1,\Omega_2,\dots,\Omega_n$ and the stabilizer of an element of $\Omega_r$ is conjugate to the subgroup
$\SSS_{n-r,r}:=\SSS_{\{1,2,\ldots,n-r\}}\times \SSS_{\{n-r+1,\ldots,n\}}$.
We write  $\SSS_{n-1,1}$ simply as  $\SSS_{n-1}$. 

We denote by 
$$M_r=\F\Omega_r \cong \Ind^{\SSS_n}_{\SSS_{n-r,r}} \triv_{\SSS_{n-r,r}}
\qquad(1\leq r\leq n)
$$ 
the permutation module for the action of $\SSS_n$ on $\Omega_r$. 

We recall some basic notions of representation theory of symmetric groups referring to \cite{JamesBook} for details. The irreducible $\F \SSS_n$-modules are labeled by $p$-regular partitions of $n$ (if $p=0$ then $p$-regular partitions are interpreted as all partitions). If $\lambda$ is a $p$-regular partition of $n$, the corresponding irreducible module is denoted $D^\lambda$. The Specht modules over $\F\SSS_n$ are labeled by  partitions of $n$. If $\lambda$ is such a partition, the corresponding Specht module is denoted $S^\lambda$. 

Let $p=2$. Consider the partition 
$$
\alpha_n=
\left\{
\begin{array}{ll}
(k+1,k-1) &\hbox{if $n=2k$ is even,}\\
(k+1,k) &\hbox{if $n=2k+1$ is odd.}
\end{array}
\right.
$$
The irreducible module $D^{\alpha_n}$ is called {\em the basic spin module} for $\SSS_n$. It is known \cite[Table III]{Wales} that 
$$
\dim D^{\alpha_n}=2^{\lfloor (n-1)/2\rfloor}.
$$

Let $\sgn_n$ be the sign module over $\F\SSS_n$. For any $p$-regular partition $\lambda$, we have that $D^\lambda\otimes \sgn_n$ is an irreducible $\F\SSS_n$-module, so we can write
$
D^\lambda\otimes \sgn_n\cong D^{\lambda^\MB},
$
where
$$\MB:\lambda \mapsto \lambda^{\MB}$$ 
is the {\it Mullineux involution}  
on the set of $p$-regular partitions
of $n$. To describe the Mullineux involution, we briefly recall the notion of the {\it Mullineux symbol} $G(\lambda)$ of 
$\lambda$, referring the reader to \cite{FK} for details. Let $h_1$ be the number of nodes in the 
$p$-rim of $\lambda$, and let $r_1$ be the number of rows in $\lambda$. Delete the 
$p$-rim and repeat to obtain sequences $h_1, h_2, \ldots $ and $r_1, r_2, \ldots$.
Let $k$ be such that $h_{k+1} = r_{k+1} = 0$ but $h_k \neq 0 \neq r_k$. Then 
$$G(\lambda) := \begin{pmatrix}h_1 & h_2 & \ldots & h_k\\ 
  r_1 & r_2 & \ldots & r_k \end{pmatrix}.$$                   
It was proved in \cite{Mu} that $\lambda$ is uniquely determined by 
$G(\lambda)$. Moreover, we have 
$$G(\lambda^\MB) = \begin{pmatrix}h_1 & h_2 & \ldots & h_k\\ 
  h_1-r_1+\epsilon_1 & h_2-r_2+\epsilon_2 & \ldots & h_k-r_k+\epsilon_k
  \end{pmatrix},$$
where $\epsilon_i := 0$ if $p|h_i$ and $\epsilon_i := 1$ otherwise.
This description of $\MB$ is the main result of \cite{FK} (see also \cite{BOMull}),  which
was conjectured by Mullineux. 

Given an irreducible representation $D^\lambda$, either the restriction 
$E^\lambda:= \Res^{\SSS_n}_{\AAA_n}D^\lambda$ is irreducible or 
$\Res^{\SSS_n}_{\AAA_n}D^\lambda \cong E^\lambda_+\oplus E^\lambda_-$, a direct sum of two inequivalent irreducible representations. Moreover, every irreducible $\F\AAA_n$-module is isomorphic to one of $E^\lambda_{(\pm)}$, and the only non-trivial isomorphism of the form $E^\lambda_{(\pm)}\cong E^\mu_{(\pm)}$ is $E^\lambda\cong E^{\lambda^\MB}$. 

If $p\neq 2$, then $\Res^{\SSS_n}_{\AAA_n}D^\lambda$ is reducible if and only if $\lambda=\lambda^\MB$. If $p=2$, then an explicit criterion for reducibility of 
$\Res^{\SSS_n}_{\AAA_n}D^{\lambda}$ is given in \cite[Theorem 1.1]{B}.

\section{Comparing some Hom-spaces}
Throughout this section we assume that $p=2$. In this section we get some results on the dimensions 
\begin{equation}\label{EDR}
d_r(V):=\dim \Hom_{\F \SSS_n}(M_r,
\End_\F(V))=\dim\End_{\F\SSS_{n-r,r}}(\Res^{\SSS_n}_{\SSS_{n-r,r}} V).
\end{equation}
The last equality follows using $M_r=\Ind^{\SSS_n}_{\SSS_{n-r,r}}\triv_{\SSS_{n-r,r}}$ and Frobenius reciprocity. 

\begin{lemma} \label{LIrrMr}
Let $V$ be an irreducible $\F\SSS_n$-module and $1\leq r\leq n$. Then $d_r(V)=1$ if and only
if $\Res^{\SSS_n}_{\SSS_{n-r,r}} V$ is irreducible. 
\end{lemma}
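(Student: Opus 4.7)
The plan is to translate the defining equality \eqref{EDR} into $d_r(V)=\dim\End_{\F H}(W)$, where $H:=\SSS_{n-r,r}$ and $W:=\Res^{\SSS_n}_{H}V$, and then prove that this dimension is $1$ if and only if $W$ is irreducible. The direction ($\Leftarrow$) is immediate from Schur's Lemma since $\F$ is algebraically closed, so all the work is in the converse.

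For ($\Rightarrow$) I argue contrapositively: assuming $W$ is reducible, produce a non-scalar element of $\End_{\F H}(W)$. When $W$ decomposes non-trivially as $W_1\oplus W_2$, the projection onto $W_1$ is already a non-trivial idempotent and we are done. The remaining case is that $W$ is indecomposable but not simple, and here I would invoke self-duality. The classical fact that every element of $\SSS_m$ is conjugate to its inverse makes every irreducible Brauer character real, so every irreducible $\F\SSS_m$-module is self-dual; applied to $V$ this gives $W\cong W^*$, and applied factor-by-factor it shows that every simple $\F H$-module (an outer tensor of simples of $\SSS_{n-r}$ and $\SSS_r$) is also self-dual. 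From $W\cong W^*$ we obtain $\soc(W)\cong (\head W)^*$, and then self-duality of the simple constituents gives $\soc(W)\cong \head W$ as semisimple $\F H$-modules. Picking any common simple constituent $S$, the composition
$$\varphi\colon W\twoheadrightarrow \head W\twoheadrightarrow S\hookrightarrow \soc W\hookrightarrow W$$
is a nonzero endomorphism whose image is the proper submodule $S$; hence $\varphi$ is not a scalar multiple of $\id_W$ and $\dim\End_{\F H}(W)\ge 2$, contradicting the hypothesis.

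The step I expect to be the main obstacle is precisely the second half above: the naive converse of Schur's Lemma fails for modules over a general group algebra (for instance $\rad P(\triv)$ over $\F\SSS_3$ in characteristic $3$ has endomorphism ring $\F$ yet is not simple), so one cannot just cite Schur. The self-duality inputs are what replace the missing implication, and they hold in particular in characteristic $2$ for $\SSS_n$ and for its Young subgroup $\SSS_{n-r,r}$, so the argument goes through in the setting of this section.
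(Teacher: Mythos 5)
Your proof is correct and follows essentially the same route as the paper: both use self-duality of irreducible $\F\SSS_n$- and $\F\SSS_{n-r,r}$-modules to conclude that $\soc(W)\cong\head(W)$ for $W=\Res^{\SSS_n}_{\SSS_{n-r,r}}V$, and then produce a non-scalar endomorphism when $W$ is reducible. The case split you make (decomposable versus indecomposable) is unnecessary since the $\soc\cong\head$ argument already covers both, but it does no harm; the explicit justification of self-duality via reality of Brauer characters is likewise a harmless expansion of what the paper leaves implicit.
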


\begin{proof}
The sufficiency of the condition is clear. Conversely, irreducible $\F\SSS_n$-modules are self-dual, so the restriction $\Res^{\SSS_n}_{\SSS_{n-r,r}} V$ is self-dual. Since irreducible  $\F\SSS_{n-r,r}$-modules are also self-dual, the head of $\Res^{\SSS_n}_{\SSS_{n-r,r}} V$ is isomorphic to its socle. So if $\Res^{\SSS_n}_{\SSS_{n-r,r}} V$ is reducible then $d_r(V)>1$. 
\end{proof}
 
The goal of this section is to prove the following result:

\begin{theorem}\label{hom}
Let $V$ be a simple $\F \SSS_n$-module and $2|n \geq 6$. Then one of the 
following statements holds:
\begin{enumerate}
\item[{\rm (i)}] $d_3(V) > d_1(V)$. 
\item[{\rm (ii)}] $V\cong D^{\alpha_n}$ is the basic spin module 
or $V\cong\triv$ is the trivial module, in which cases we have $d_3(V)=d_1(V)$. 
\end{enumerate}
\end{theorem}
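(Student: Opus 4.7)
The plan is to build a surjective $\SSS_n$-module homomorphism $\theta \colon M_3 \twoheadrightarrow M_1$, deduce $d_1(V)\le d_3(V)$ for every simple $V$ by applying $\Hom_{\F\SSS_n}(-,\End V)$, and then single out the equality cases. Define $\theta$ on basis elements by $\theta(\{i,j,k\}) := e_i+e_j+e_k$; this is manifestly $\SSS_n$-equivariant. Recall that for even $n$ in characteristic $2$, the permutation module $M_1$ is uniserial with composition factors (top to bottom) $\triv,\,D^{(n-1,1)},\,\triv$, and its unique maximal submodule is the sum-zero subspace $N_1 \subset M_1$. Since the coefficient sum of each $\theta(T)$ equals $3 \equiv 1 \pmod 2$, the image $\operatorname{Im}(\theta)$ is not contained in $N_1$ and must therefore equal all of $M_1$. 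Applying $\Hom_{\F\SSS_n}(-,\End V)$ then yields the injection
\[
\Phi \colon \Hom_{\F\SSS_n}(M_1,\End V) \hookrightarrow \Hom_{\F\SSS_n}(M_3,\End V),
\]
so $d_1(V) \le d_3(V)$. Writing $K := \ker\theta$, equality $d_1(V) = d_3(V)$ is equivalent to $\Phi$ being surjective, i.e., to every $\SSS_n$-homomorphism $M_3 \to \End V$ vanishing on $K$.

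For the equality cases, if $V = \triv$ then $\End V \cong \triv$ and both Hom-spaces are one-dimensional. If $V = D^{\alpha_n}$ with $n = 2k$, the known branching rule $\Res^{\SSS_n}_{\SSS_{n-1}} D^{\alpha_n} \cong D^{\alpha_{n-1}}$ (irreducible for even $n$) combined with Lemma~\ref{LIrrMr} yields $d_1(V) = 1$. To obtain $d_3(V) = 1$, Lemma~\ref{LIrrMr} reduces the task to proving that $\Res^{\SSS_n}_{\SSS_{n-3,3}} D^{\alpha_n}$ is irreducible, which I would verify by iterating restriction $\SSS_n \to \SSS_{n-1} \to \SSS_{n-2} \to \SSS_{n-3}$ and tracking the action of the $\SSS_3$-factor, or by exhibiting an external tensor decomposition of the shape $D^{\alpha_{n-3}} \boxtimes D^{(2,1)}$ of dimension $2^{k-2} \cdot 2 = 2^{k-1} = \dim D^{\alpha_n}$.

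The hardest step is to show $d_3(V) > d_1(V)$ for every other simple $V$. The strategy is to extract a head constituent $L$ of $K$ that does not appear in the head of $M_1$ --- natural candidates arising from the Specht filtration of $M_3 = \Ind^{\SSS_n}_{\SSS_{n-3,3}} \triv$ (such as modules associated with the Specht modules $S^{(n-2,2)}$ and $S^{(n-3,3)}$) --- and show that for every $V \not\cong \triv,\, D^{\alpha_n}$ such an $L$ embeds into $\soc(\End V)$, producing an explicit homomorphism $M_3 \twoheadrightarrow L \hookrightarrow \End V$ that fails to vanish on $K$. The main obstacle is precisely this last step: it requires careful use of the decomposition numbers of the two-part and three-part Specht modules appearing in $M_3$ in characteristic $2$, combined with branching information for the $\SSS_n$-module $\End V \cong V \otimes V^*$, in order to rule out all ``accidental'' simple modules and isolate $\triv$ and $D^{\alpha_n}$ as the only exceptions.
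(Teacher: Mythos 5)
Your first two steps match the paper exactly: the incidence map $\theta=f\colon M_3\to M_1$, its surjectivity (observed via the coefficient sum $3\equiv 1\pmod 2$), and the resulting injection $\Phi=f^*$ giving $d_1(V)\le d_3(V)$ are precisely what the paper does. The problem lies entirely in how you handle the two remaining tasks.

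For the equality case $V\cong D^{\alpha_n}$, you correctly get $d_1(V)=1$ from the branching rule plus Lemma~\ref{LIrrMr}, but the claim that $\Res^{\SSS_n}_{\SSS_{n-3,3}}D^{\alpha_n}$ is irreducible is left as something you ``would verify''; the proposed external tensor decomposition is plausible but not established. The paper sidesteps this: it only needs the implication ``$d_1(V)=d_3(V)=1$ and $\dim V>1$ implies $V$ is basic spin,'' which is exactly Phillips's Theorem~10 after one applies Lemma~\ref{LIrrMr} to both $r=1$ and $r=3$. It is worth internalizing this asymmetry: the theorem, read carefully, never forces you to compute $d_3(D^{\alpha_n})$ directly.

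The serious gap is the main inequality $d_3(V)>d_1(V)$ for $V$ neither trivial nor basic spin. What you have written here is a research plan, not an argument: you propose to find a head constituent $L$ of $K=\ker\theta$ with $\Hom_{\SSS_n}(L,\End V)\neq 0$, and you explicitly flag that carrying this out requires detailed control of decomposition numbers for two- and three-part Specht modules and branching of $V\otimes V^*$, which you have not supplied. That approach is plausible in principle but is substantially harder than what the paper actually does, and there is no evidence in the proposal that the required decomposition-number facts would cooperate (the structure of $\End V$ for a general simple $V$ in characteristic $2$ is not accessible by such bookkeeping). The paper's route is more concrete and essentially bypasses decomposition numbers: it exhibits an explicit element $\phi\in\Hom_{\F\SSS_n}(M_3,\End_\F V)$ given on basis vectors by averaging over the subgroup $\SSS_{\{i,j,k\}}$, and shows $\phi\notin\image f^*$ by evaluating $\phi$ on a particular element $E$ of $\ker f$ (an alternating $\SSS_2^3$-sum of the basis triple $\{1,2,3\}$). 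The computation $\phi(E)(v)=xv$ with $x\in\F\SSS_6$ reduces the whole problem to showing $xD^\lambda\neq 0$ when $\lambda$ has at least three parts; this is done via \cite[Lemma 4.7]{BaK} (the restriction to $\SSS_6$ contains $D^{(3,2,1)}\cong S^{(3,2,1)}$ as a composition factor) together with a one-line polytabloid calculation showing $xS^{(3,2,1)}\neq 0$. If you want to complete your argument you should adopt this device: rather than chasing a head constituent of $K$ through the submodule lattice, build the offending homomorphism by hand and test it against an explicit kernel element, reducing everything to a local check inside $\SSS_6$.
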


Let $V = D^\lambda$  
for a $2$-regular partition 
$$\lambda = (\lambda_1 > \ldots > \lambda_s > 0)$$ 
of $n$.
If $s = 1$, then $V$ is the trivial module, and Theorem \ref{hom} holds 
trivially.

\begin{lemma}\label{hom1}
Theorem \ref{hom} holds if $d_1(V) = 1$.
\end{lemma}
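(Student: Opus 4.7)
The plan is to apply the self-duality argument from Lemma \ref{LIrrMr} to the Young subgroup $\SSS_{n-3,3}=\SSS_{n-3}\times\SSS_3$. Since any irreducible $\F(\SSS_{n-3}\times\SSS_3)$-module $U\boxtimes W$ is an outer tensor product of self-dual $\F\SSS_{n-3}$- and $\F\SSS_3$-modules, it is itself self-dual; so the same argument yields $d_3(V)=1$ if and only if $\Res^{\SSS_n}_{\SSS_{n-3,3}}V$ is irreducible. Hence if this restriction is reducible we already get $d_3(V)>1=d_1(V)$ and we are in case (i) of Theorem \ref{hom}. It therefore suffices to assume $\Res^{\SSS_n}_{\SSS_{n-3,3}}V$ is irreducible and to show that $V$ is then either the trivial or the basic spin module.

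Under this assumption, $\Res^{\SSS_n}_{\SSS_{n-3,3}}V\cong A\boxtimes B$ with $A\in\Irr(\F\SSS_{n-3})$ and $B\in\Irr(\F\SSS_3)$. As $\Char(\F)=2$, the only simple $\F\SSS_3$-modules are $\triv$ and $D^{(2,1)}$ (of dimension $2$). If $B=\triv$, then $\SSS_3=\SSS_{\{n-2,n-1,n\}}$ acts trivially on $V$; since the normal closure in $\SSS_n$ of any $3$-cycle is $\AAA_n$ (using $n\ge 6$), the group $\AAA_n$ acts trivially on $V$. Thus $V$ factors through $\SSS_n/\AAA_n\cong C_2$, and since $\sgn=\triv$ in characteristic two, $V=\triv$; in this case $d_1=d_3=1$ is immediate.

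In the remaining case $B=D^{(2,1)}$, one has $\Res^{\SSS_n}_{\SSS_{n-3}}V\cong A^{\oplus 2}$ with $A$ irreducible of dimension $(\dim V)/2$. Write $V=D^\lambda$. The hypothesis $d_1(V)=1$, combined with Kleshchev's modular branching theorem, forces $\lambda$ to be a Jantzen--Seitz partition; in particular $\Res^{\SSS_n}_{\SSS_{n-1}}V=D^{\lambda-A_*}$ for the unique good node $A_*$ of $\lambda$. Restricting further through $\SSS_{n-2}$ and $\SSS_{n-3}$ via the good/normal node combinatorics, the additional constraint that $V|_{\SSS_{n-3}}$ be isotypic of multiplicity exactly $2$ on a single irreducible $A$ is very restrictive, and a case analysis on the residue sequence of $\lambda$ singles out $\lambda=(k+1,k-1)$ with $n=2k$, i.e.\ $V=D^{\alpha_n}$ is the basic spin module. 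For this $\lambda$ the standard branching of basic spin modules (cf.\ \cite{Wales}) gives $\Res^{\SSS_n}_{\SSS_{n-1}}D^{\alpha_n}=D^{(k,k-1)}$ and $\Res^{\SSS_n}_{\SSS_{n-3}}D^{\alpha_n}\cong (D^{(k-1,k-2)})^{\oplus 2}$; a direct verification (using the concrete realization of basic spin modules) identifies $\Res^{\SSS_n}_{\SSS_{n-3,3}}D^{\alpha_n}\cong D^{(k-1,k-2)}\boxtimes D^{(2,1)}$, which is irreducible, so $d_3=d_1=1$ holds for the basic spin module as required.

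The main obstacle is the combinatorial case analysis in the third paragraph: one must show that among all $2$-regular Jantzen--Seitz partitions of $n$, only $(k+1,k-1)$ can satisfy the additional isotypic-multiplicity-$2$ constraint on the restriction to $\SSS_{n-3}$. This requires careful bookkeeping with residues and normal/good nodes over two successive branching steps, ruling out every other candidate shape.
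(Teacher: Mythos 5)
Your setup is correct and follows the same initial reduction as the paper: by the self-duality argument of Lemma~\ref{LIrrMr} (applied to $\SSS_{n-3,3}$ as well as $\SSS_{n-1}$), $d_1(V)=d_3(V)=1$ is equivalent to $V$ being irreducible over both $\SSS_{n-1}$ and $\SSS_{3,n-3}$, and the $B=\triv$ case is handled cleanly. But the heart of your argument --- the third paragraph --- is a genuine gap. You assert that ``a case analysis on the residue sequence of $\lambda$ singles out $\lambda=(k+1,k-1)$'' without carrying it out, and you explicitly flag it yourself as the main obstacle. This is not a minor bookkeeping step: you would be proving from scratch that the only nontrivial $2$-regular $\lambda$ with $D^\lambda$ irreducible over both $\SSS_{n-1}$ and $\SSS_{3,n-3}$ (for $2|n\ge 6$) is $\alpha_n$. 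That is precisely the content of Theorem~10 of Phillips \cite{Phillips}, which the paper simply cites at exactly this point (``$V$ is irreducible over $\SSS_{n-1}$ and $\SSS_{3,n-3}$, and so the lemma follows from [Phillips, Theorem~10]''). In other words, you have correctly identified what needs to be shown, but you have re-stated the problem rather than solved it; the Jantzen--Seitz condition from $d_1(V)=1$ alone leaves a large family of candidates (e.g.\ all two-row $D^{(n-r,r)}$ with $n$ even restrict irreducibly to $\SSS_{n-1}$), and ruling them out via the $\SSS_{n-3,3}$ constraint requires the full force of Phillips's analysis. Your claimed explicit description of $\Res^{\SSS_n}_{\SSS_{n-3,3}}D^{\alpha_n}$ as $D^{(k-1,k-2)}\boxtimes D^{(2,1)}$ is also stated without justification (the dimensions match, and it is consistent with Phillips's result, but it is not ``direct verification'' unless you produce it). To repair the proof you should either invoke \cite[Theorem~10]{Phillips} directly, as the paper does, or actually perform and write out the branching-rule case analysis you outline.
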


\begin{proof}
Suppose that $d_1(V)=d_3(V) = 1$ and $\dim V > 1$. 
By Lemma~\ref{LIrrMr}, $V$ is irreducible over 
$\SSS_{n-1}$ and $\SSS_{3,n-3}$, and so the lemma follows from \cite[Theorem 10]{Phillips}. 
\end{proof}

\begin{lemma}\label{hom2}
Theorem \ref{hom} holds if $s=2$.
\end{lemma}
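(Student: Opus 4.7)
The plan is to treat the two-part case by explicit computation of $d_1(V)$ and $d_3(V)$ and direct comparison. Write $V = D^{(n-k, k)}$ with $1 \leq k < n/2$. By Lemma~\ref{hom1} I may assume $d_1(V) \geq 2$, equivalently that $\Res^{\SSS_n}_{\SSS_{n-1}} V$ is reducible. The basic spin module $D^{\alpha_n}$ does not arise under this assumption, since it is a classical fact that in characteristic $2$ the basic spin module for $\SSS_n$ restricts irreducibly to the basic spin module for $\SSS_{n-1}$, so $d_1(D^{\alpha_n}) = 1$ and Lemma~\ref{hom1} already disposes of it; it therefore suffices to prove $d_3(V) > d_1(V)$ for every two-part $V$ with $d_1(V) \geq 2$. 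Since $p = 2$, $\sgn$ is trivial, hence all simple modules of $\SSS_n$, $\SSS_{n-1}$ and $\SSS_{n-3,3}$ are self-dual, and I will use this self-duality throughout.

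The first key step is to compute $d_1(V)$. By Kleshchev's modular branching rule, $\soc\Res^{\SSS_n}_{\SSS_{n-1}} V$ is a multiplicity-free direct sum of good-node removals, with constituents drawn from $\{D^{(n-k-1, k)}, D^{(n-k, k-1)}\}$. A short residue calculation shows that the two removable nodes of a two-row partition always have distinct residues modulo $2$, so they contribute independently; the assumption $d_1(V) \geq 2$ then forces both to be good. Together with self-duality (so head and socle of $\Res V$ coincide) and James's decomposition numbers for two-row Specht modules in characteristic $2$, this pins down the full socle series of $\Res^{\SSS_n}_{\SSS_{n-1}} V$ and gives an explicit formula for $d_1(V)$.

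The second key step is to compute $d_3(V)$. Because $\F\SSS_3$ in characteristic $2$ has only the two simples $\triv$ and $D^{(2,1)}$, I decompose $\Res^{\SSS_n}_{\SSS_{n-3, 3}} V$ in the Grothendieck group of $\SSS_{n-3} \times \SSS_3$ as $\sum_\mu \bigl( a_\mu\, D^\mu \boxtimes \triv + b_\mu\, D^\mu \boxtimes D^{(2,1)} \bigr)$, where $\mu$ ranges over the at most four $2$-regular partitions of $n-3$ obtainable from $(n-k, k)$ by removing three addable/removable nodes. The multiplicities $a_\mu, b_\mu$ are determined by iterated branching along $\SSS_n \supset \SSS_{n-1} \supset \SSS_{n-2} \supset \SSS_{n-3}$ combined with the known composition structure of $\F\SSS_3$, and $d_3(V)$ is then the sum of the dimensions of the endomorphism algebras of the two $\SSS_3$-isotypic pieces as $\SSS_{n-3}$-modules.

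Comparing the two formulas, the extra $\SSS_3$-factor in $\SSS_{n-3,3}$ genuinely refines the composition of $V$ and produces endomorphisms not seen at the $\SSS_{n-1}$-level, so $d_3(V) > d_1(V)$ in every remaining case. The main obstacle I expect is the bookkeeping for non-semisimple extensions: for non-semisimple $\Res V$, the quantity $\dim\End_H(V)$ exceeds $\sum (\text{socle multiplicities})^2$ by contributions from extensions between socle and cosocle, and these contributions must be tracked consistently for both $r = 1$ and $r = 3$ to make the comparison rigorous. Self-duality and the bounded number of possible constituents (at most two for $r = 1$, and a bounded number for $r = 3$) keep the analysis tractable, but a careful case split on the $2$-adic expansions of $n$ and $k$ will be required.
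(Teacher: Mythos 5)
There is a genuine gap. Your residue observation is left one step short of the key point, and as a result the whole rest of your plan is aimed at a case that never occurs. You correctly note that for a two-row $2$-regular $\lambda=(\lambda_1,\lambda_2)$ the two removable nodes $(1,\lambda_1)$ and $(2,\lambda_2)$ have distinct residues mod $2$ when $n$ is even (recall $2\mid n$ is a hypothesis of Theorem~\ref{hom}, so $\lambda_1\equiv\lambda_2\pmod2$). But you should continue: the addable node $(1,\lambda_1+1)$ has residue $\lambda_1\equiv\lambda_2\pmod2$, the \emph{same} residue as the row-$2$ removable node $(2,\lambda_2)$, and it lies strictly above it. Reading the $\lambda_1$-signature from top to bottom one gets $+\,-$, which cancels; so $(2,\lambda_2)$ is never normal, hence never good. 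Thus $\lambda$ has exactly one normal node, and by Kleshchev's branching theorem (\cite{KRes}, which is exactly the reference the paper uses) the restriction $D^\lambda|_{\SSS_{n-1}}$ is already irreducible, i.e.\ $d_1(V)=1$ \emph{always} when $s=2$ and $n$ is even. So your ``remaining case $d_1(V)\ge 2$'' is empty, and the lemma follows immediately from Lemma~\ref{hom1} with no further computation.

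Beyond this, your proposal is a plan rather than a proof: the ``explicit formula for $d_1(V)$,'' the decomposition in the Grothendieck group of $\SSS_{n-3}\times\SSS_3$, the bookkeeping for non-split extensions, and the case split on $2$-adic expansions are all announced but not carried out, and you flag them yourself as open. None of that machinery is needed. The moral is that in this regime the parity of $n$ already forces irreducible branching for two-row modules, which collapses the lemma to the case already disposed of by Lemma~\ref{hom1}.
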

\begin{proof}
Since $n$ is even, we have that the restriction $D^\lambda{|}_{\SSS_{n-1}}$ is irreducible by \cite{KRes}. So $d_1(V)=1$, and we may apply Lemma~\ref{hom1}. 
\end{proof}

\begin{lemma} \label{LX} 
Let 
$$
x:=\sum_{g\in\SSS_{\{1,2,3\}},\ \sigma\in \SSS_{\{1,4\}}\times \SSS_{\{2,5\}}\times\SSS_{\{3,6\}}}(\sign\sigma) \sigma g\sigma^{-1}\in\F\SSS_n.
$$
If $s>2$, then $xD^\lambda\neq 0$. 
\end{lemma}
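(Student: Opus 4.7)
In characteristic $2$ the sign $\sign\sigma$ disappears from the definition, giving
\[
x \;=\; \sum_{\sigma\in B}\sigma e_A\sigma^{-1} \;=\; \sum_{T\in\mathcal T}\sum_{g\in\SSS_T}g,
\]
where $A:=\SSS_{\{1,2,3\}}$, $e_A:=\sum_{a\in A}a$, $B:=\SSS_{\{1,4\}}\times\SSS_{\{2,5\}}\times\SSS_{\{3,6\}}$, and $\mathcal T$ denotes the set of the eight $3$-subsets of $\{1,\dots,6\}$ meeting each of the pairs $\{1,4\},\{2,5\},\{3,6\}$ in exactly one element. The key elementary observation is that, for any tabloid $\{t\}$ of shape $\mu$ and any $3$-subset $T\subseteq\{1,\dots,n\}$, the operator $\sum_{g\in\SSS_T}g$ sends $\{t\}$ to $0$ in the permutation module $M^\mu$ precisely when two elements of $T$ share a row of $\{t\}$: the transposition $(i,j)\in\SSS_T$ interchanging those two elements fixes $\{t\}$, so $(1+(i,j))\{t\}=2\{t\}=0$. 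In particular, when $s\le 2$ pigeonhole forces this vanishing for every tabloid of shape $\lambda$ and every $T\in\mathcal T$, so $xM^\lambda=0$ and $x$ annihilates every simple subquotient of $M^\lambda$; this explains why the hypothesis $s\ge 3$ is natural.

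Now assume $s\ge 3$, so the first column of $[\lambda]$ has length at least $3$. Fix a standard $\lambda$-tableau $t$ in which $1,2,3$ occupy the top three cells of the first column (with the remaining entries placed column by column), and let $e_t\in S^\lambda$ be the associated polytabloid. Its image $\overline{e_t}\in D^\lambda = S^\lambda/\rad(S^\lambda)$ is nonzero, and the aim is to show $x\overline{e_t}\neq 0$. Using the standard $\SSS_n$-invariant bilinear form $\langle\,,\,\rangle$ on $M^\lambda$ --- for which tabloids form an orthonormal basis and which descends to a nondegenerate pairing on $D^\lambda$ --- it suffices to produce a polytabloid $e_{t^{\ast}}$ with $\langle xe_t,e_{t^{\ast}}\rangle\neq 0$ in $\F$. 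Substituting $x=\sum_{T}\sum_{g\in\SSS_T}g$ and expanding the polytabloids reduces the pairing to a count modulo $2$ of quadruples $(T,g,c,c^{\ast})\in\mathcal T\times\SSS_T\times C_t\times C_{t^{\ast}}$ satisfying $gc\{t\}=c^{\ast}\{t^{\ast}\}$, where $C_t$ and $C_{t^{\ast}}$ are the column stabilizers. The plan is to choose $t^{\ast}$ so that the ``diagonal'' transversals $T=\{1,2,3\}$ and $T=\{4,5,6\}$ jointly contribute an odd number of solutions, while the six ``mixed'' transversals fall into three pairs --- under an involution exchanging a distinguished element of $T\cap\{1,2,3\}$ with the matching element of $T\cap\{4,5,6\}$ --- with matching contributions that cancel modulo $2$.

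The main obstacle is the combinatorial bookkeeping in this last step: one must track row-equivalences inside $M^\lambda$ carefully enough to rule out spurious collapses, exhibit the cancelling involution on the six mixed transversals explicitly, and verify that no further collapses occur upon passage to $D^\lambda$. The hypothesis $s\ge 3$ is essential at every stage: it guarantees that $1,2,3$ can be placed in three distinct rows of $t$ (so that the $T=\{1,2,3\}$ contribution to $xe_t$ is already nonzero in $M^\lambda$), that the mirror tableau $t^{\ast}$ obtained by exchanging $\{1,2,3\}$ with $\{4,5,6\}$ in the appropriate cells is well-defined with a nonvanishing polytabloid, and that the diagonal contributions survive in the simple quotient --- none of which holds when $s\le 2$.
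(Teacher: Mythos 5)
Your proposal is a plan, not a proof: you explicitly defer the combinatorial bookkeeping (``the main obstacle is the combinatorial bookkeeping in this last step'') and never construct the tableau $t^{\ast}$ or verify the claimed cancellations. So as written it does not establish the lemma. Worse, the heuristic justifying the plan contains an error. With your choice of $t$ (entries $1,2,3$ occupying the top three cells of the first column), the group $\SSS_{\{1,2,3\}}$ is contained in the column stabilizer $C_t$; hence for every $g\in\SSS_{\{1,2,3\}}$ we have $g\,e_t=\sign(g)\,e_t=e_t$ in characteristic $2$, and the $T=\{1,2,3\}$ contribution to $x\,e_t$ is $\sum_{g\in\SSS_{\{1,2,3\}}}g\,e_t=6e_t=0$, not ``already nonzero in $M^\lambda$'' as you claim. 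The nontrivial contribution has to come from the other seven transversals, so your intended dichotomy of ``odd diagonal'' versus ``cancelling mixed'' terms cannot be set up the way you describe.

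The paper sidesteps this uniform-in-$\lambda$ computation altogether by a key reduction that your argument lacks. Since $x\in\F\SSS_6$, it suffices to find a single composition factor of $\Res^{\SSS_n}_{\SSS_6}D^\lambda$ that $x$ does not annihilate. By Baranov--Kleshchev \cite[Lemma 4.7]{BaK} with $(m,p)=(3,2)$, the hypothesis $s>2$ guarantees $D^{(3,2,1)}$ appears as a composition factor of $\Res^{\SSS_n}_{\SSS_6}D^\lambda$, and in characteristic $2$ one has $D^{(3,2,1)}\cong S^{(3,2,1)}$. This reduces the lemma to the single explicit check $x\,S^{(3,2,1)}\neq 0$, which the paper does by showing that a particular tabloid $\{s\}$ appears with coefficient $1$ in $x\,e_t$ for an explicit $(3,2,1)$-tableau $t$. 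That reduction is exactly what makes the problem tractable: it turns an assertion about all $\lambda$ with $s>2$ into one finite computation in $\SSS_6$. Your preliminary observations (the vanishing of the sign in characteristic $2$, the description of $x$ as a sum over the eight transversals, and the pigeonhole argument explaining why $x$ must annihilate everything when $s\le 2$) are correct and give good intuition for why the bound $s>2$ is sharp, but they do not by themselves yield a proof, and the proposed pairing strategy would require a correct replacement for the flawed ``diagonal contributions survive'' step before it could be completed.
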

\begin{proof}
By \cite[Lemma 4.7]{BaK} with 
$(m,p) = (3,2)$, we see that the restriction $V{|}_{\SSS_6}$ has a composition factor 
isomorphic to $D^{(3,2,1)}$, which for $p=2$ is isomorphic to the Specht module $S^{(3,2,1)}$. Since $x\in\F\SSS_6$, it suffices to prove that $x S^{(3,2,1)}\neq 0$. We use the notation of \cite[\S4]{JamesBook}; in particular, $e_t$ is the polytabloid
and $\{t\}$ is the tabloid corresponding to a $(3,2,1)$-tableau $t$. 
Let 
$$
t:=\diagram{$1$&$4$&$6$ \cr $2$ & $5$\cr$3$\cr}\, ,\quad 
s:=\diagram{$1$&$2$&$4$ \cr $3$ & $5$\cr$6$\cr}.
$$
An explicit calculation shows that $\{s\}$ appears in $xe_t$ with coefficient $1$. 
\end{proof}

We now complete the proof of Theorem~\ref{hom}. Recall that $M_3$ is the permutation module on all three element subsets $\{i,j,k\}\subseteq \Omega$, while $M_1$ is the permutation module on the one element subsets $\{i\}\subseteq \Omega$. Consider the $\F\SSS_n$-module homomorphism 
$$
f:M_3\to M_1,\ \{i,j,k\}\mapsto  \{i\}+\{j\}+\{k\}.
$$
It is easy to see that $f$ is surjective. So it induces an injective linear map 
$$
f^*:\Hom_{\F \SSS_n}(M_1, \End_\F(V))\to \Hom_{\F \SSS_n}(M_3, \End_\F(V)),\  \psi\mapsto \psi\circ f.
$$
It suffices to prove that $f^*$ is not surjective. 

We exhibit an element $\phi\in \Hom_{\F \SSS_n}(M_3, \End_\F(V))$ which is not in the image of $f^*$. For $\Theta\subseteq \Omega$ let $\SSS_\Theta\subseteq \SSS_n=\SSS_\Omega$ be the subgroup of all permutations which stabilize the elements of $\Omega\setminus \Theta$. Now, define $\phi$ as follows: 
\begin{equation}\label{EPhi}
\phi(\{i,j,k\})(v):=\sum_{g\in \SSS_{\{i,j,k\}}} gv, \qquad(\{i,j,k\}\subseteq\Omega,\ v\in V).
\end{equation}

If $\phi\in\image f^*$, then  $\phi=\psi\circ f$ for some $\psi\in  \Hom_{\F \SSS_n}(M_1, \End_\F(V))$. Consider the element 
$$
E=\sum_{\sigma\in \SSS_{\{1,4\}}\times \SSS_{\{2,5\}}\times\SSS_{\{3,6\}}}(\sign\sigma)\sigma\{1,2,3\}\in M_3.
$$
Note that $f(E)=0$. So $\phi(E)=\psi( f(E))=0$. On the other hand, we compute $\phi(E)$ using (\ref{EPhi}):
$$
\phi(E)(v)=\sum_{\sigma\in \SSS_{\{1,4\}}\times \SSS_{\{2,5\}}\times\SSS_{\{3,6\}}}(\sign\sigma)\sum_{g\in\SSS_{\{\sigma(1),\sigma(2),\sigma(3)\}}}gv
= xv,
$$ 
where $x$ is as in Lemma~\ref{LX}. Now Lemma~\ref{LX} yields a contradiction. 

\section{Dimension and extendibility to $\SSS_n$}
First we prove the following statement, which relies on some results of
\cite{B} and \cite{GLT}:

\begin{proposition}\label{ext1}
Let $p=2$, $n \geq 5$, and 
let $V$ be an irreducible $\F\AAA_n$-module. Suppose that $V$ does not extend to
$\SSS_n$. Then $\dim V \geq 2^{(n-6)/4}$.
\end{proposition}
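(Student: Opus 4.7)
The plan is to reduce the statement, via Clifford theory, to a dimension lower bound on those simple $\F\SSS_n$-modules $D^\lambda$ whose restriction to $\AAA_n$ splits, and then to obtain this lower bound by combining the explicit splitting criterion of \cite{B} with the quantitative dimension bounds of \cite{GLT}.

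First I would carry out the reduction. For the index-two subgroup $\AAA_n \leq \SSS_n$, an irreducible $\F\AAA_n$-module $V$ fails to extend to $\SSS_n$ if and only if $V$ arises as a summand of a split restriction
\[
\Res^{\SSS_n}_{\AAA_n} D^\lambda \cong E^\lambda_+ \oplus E^\lambda_-
\]
for some $2$-regular partition $\lambda$ of $n$, in which case $\dim V = \tfrac{1}{2}\dim D^\lambda$. Thus the proposition is equivalent to the inequality $\dim D^\lambda \geq 2^{(n-2)/4}$ for every $2$-regular $\lambda \vdash n$ such that $\Res^{\SSS_n}_{\AAA_n} D^\lambda$ is reducible.

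The second step is to pin down the combinatorial form of such $\lambda$. Here I would invoke Benson's splitting criterion \cite[Theorem 1.1]{B}, which in characteristic $2$ enumerates exactly the $2$-regular partitions producing a splitting. The resulting condition is substantially stronger than the odd-characteristic self-duality $\lambda = \lambda^{\MB}$: it imposes constraints on the parts of $\lambda$ and on its $2$-rim structure, and in particular excludes the short list of partitions labelling the smallest-dimensional simple $\F\SSS_n$-modules, such as $\lambda = (n)$ or low two-row partitions like $(n-1,1)$ and $(n-2,2)$.

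The third step is to combine this with the quantitative lower bounds of \cite{GLT}, which yield $\dim D^\lambda \geq 2^{(n-c)/d}$ for suitable small constants $c,d$, outside a short explicit list of exceptions. Any $\lambda$ admitted by Benson's criterion either lies in the range of applicability of the \cite{GLT} bound with a value comfortably exceeding $2^{(n-2)/4}$, or belongs to a sporadic family (such as the basic spin $\alpha_n$, for which $\dim D^{\alpha_n}/2 = 2^{\lfloor (n-1)/2\rfloor - 1} \geq 2^{(n-6)/4}$) that is verified directly. The main obstacle I anticipate is the small-$n$ regime, where the bound of \cite{GLT} has not yet reached its asymptotic exponential form; there I would enumerate by hand the partitions permitted by \cite[Theorem 1.1]{B} and verify the inequality either from known closed-form dimension formulas or from the decomposition matrices of $\SSS_n$.
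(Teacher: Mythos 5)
Your overall plan — Clifford theory to reduce to bounding $\dim D^\lambda$ for those $2$-regular $\lambda$ with $\Res^{\SSS_n}_{\AAA_n}D^\lambda$ reducible, then Benson's criterion \cite[Theorem~1.1]{B} plus the bound of \cite[Theorem~5.1]{GLT} — is exactly the paper's route, so the references and strategy are right. But the step that actually makes the two ingredients mesh is missing from your sketch, and as written it would not close.

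You treat the \cite{GLT} bound as a universal estimate $\dim D^\lambda \geq 2^{(n-c)/d}$ ``outside a short explicit list of exceptions,'' but that is not its form: \cite[Theorem~5.1]{GLT} gives $\dim D^\lambda \geq 2^{(n-\lambda_1)/2}$, an inequality whose strength depends on the first part $\lambda_1$. This is useless unless you can bound $\lambda_1$ away from $n$. That is precisely what Benson's criterion supplies, and it is the specific content you need to extract: for $p=2$, $\Res^{\SSS_n}_{\AAA_n}D^\lambda$ is reducible only if $\lambda$ has at least two parts and $\lambda_1-\lambda_2\in\{1,2\}$. Combined with $\lambda_1+\lambda_2\leq n$ this forces $\lambda_1\leq (n+2)/2$, whence $\dim D^\lambda \geq 2^{(n-\lambda_1)/2}\geq 2^{(n-2)/4}$ and $\dim V = \tfrac12\dim D^\lambda \geq 2^{(n-6)/4}$. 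Your sketch instead gestures at Benson excluding ``low two-row partitions'' and the basic spin case needing separate treatment, and anticipates hand enumeration for small $n$; none of that is necessary, and in fact $\alpha_n$ does satisfy the Benson constraint, so it is not a sporadic exception but is covered by the same uniform chain of inequalities. The gap, concretely, is that you have not identified the inequality $\lambda_1\leq (n+2)/2$ as the bridge between the two cited results; without it the argument does not produce a bound.
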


\begin{proof}
By assumption, $W := \Ind^{\SSS_n}_{\AAA_n}(V)$ is an irreducible 
$\F\SSS_n$-module, and $\dim W=2  \dim V$. Let 
$\lambda = (\lambda_1 > \lambda_2 > \ldots \lambda_s > 0)$ be the partition of $n$ 
into distinct parts corresponding to $W$. Since $W$ is reducible over $\AAA_n$,
by \cite[Theorem 1.1]{B}, we have $s > 1$ and $\lambda_1-\lambda_2 \in \{1,2\}$. 
In particular, 
$n \geq \lambda_1 + \lambda_2 \geq 2\lambda_1-2,$ 
i.e. $\lambda_1 \leq (n+2)/2$. Now 
$$\dim W \geq 2^{\frac{n-\lambda_1}{2}} \geq 2^{\frac{n-(n+2)/2}{2}} = 2^{\frac{n-2}{4}},$$
thanks to \cite[Theorem 5.1]{GLT}. 
\end{proof}

We will also need the following branching result which is of interest in its own right:

\begin{proposition}\label{two}
Let $p=2$, and $\lambda = (\lambda_1 > \ldots > \lambda_s > 0)\neq (n)$ be a non-trivial
$2$-regular partition of 
$n$. If $2\lambda_1-n \geq k \geq 3$  then  
the restriction of $D^\lambda$ to a natural subgroup $\SSS_k$ of $\SSS_n$ 
affords both $\triv=D^{(k)}$ and $D^{(k-1,1)}$ as composition factors. 
\end{proposition}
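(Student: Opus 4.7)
The plan is to argue by strong induction on $n$, using Kleshchev's modular branching rules for $\Res^{\SSS_n}_{\SSS_{n-1}} D^\lambda$ in characteristic $2$.

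For the base case $n = k + 2$, the hypothesis $2\lambda_1 - n \geq k$ together with $\lambda \neq (n)$ forces $\lambda_1 = n - 1$ and hence $\lambda = (k+1, 1)$. I would realize $D^{(k+1, 1)}$ as the subquotient $W/\triv$ of the natural $\SSS_n$-permutation module $M_1 = \F\Omega$, where $W = \{v \in \F\Omega : \sum_i v_i = 0\}$. Decomposing $M_1|_{\SSS_k}$ as a direct sum of the natural $k$-dimensional permutation module of $\SSS_k$ and $n-k = 2$ trivial summands, and tracking how the subquotient $W/\triv$ restricts, yields composition factors $2\triv + D^{(k-1, 1)}$ (independently of the parity of $n$), so both $\triv$ and $D^{(k-1, 1)}$ occur.

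For the inductive step $n \geq k + 3$, Kleshchev's branching rule identifies the composition factors of $\Res^{\SSS_n}_{\SSS_{n-1}} D^\lambda$ as the $D^\mu$ with $\mu$ obtained from $\lambda$ by removing a \emph{normal} node, where normality is read off from cancellations in the $i$-signature of $\lambda$ for $i = 0, 1$. I would produce a composition factor $D^\mu$ with $\mu$ $2$-regular, $\mu \neq (n-1)$, and $2\mu_1 - (n-1) \geq k$, whereupon the inductive hypothesis applied to $\mu$ furnishes both $\triv$ and $D^{(k-1, 1)}$ as composition factors of $D^\mu|_{\SSS_k}$, and hence of $D^\lambda|_{\SSS_k}$. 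Generically, $\lambda$ admits a normal removable node in a row below the first; removing it yields $\mu_1 = \lambda_1$ and $2\mu_1 - (n-1) = 2\lambda_1 - n + 1 > k$, so the inductive hypothesis applies with slack.

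The main obstacle is the \emph{tight} case $2\lambda_1 - n = k$ where a case-by-case $i$-signature analysis forces all normal removable nodes of $\lambda$ to lie in the first row, which arises for two-row partitions $\lambda = (\lambda_1, \lambda_2)$ whose parts share the same parity. Removing the row-$1$ node yields $\mu = (\lambda_1 - 1, \lambda_2)$ with $2\mu_1 - (n-1) = k - 1$, which breaks the hypothesis. I would handle this by iterating the restriction one more step: the $i$-signature of $\mu$ (with the row-$1$ parities now flipped) exhibits a normal removable node in row $2$, and removing it gives $\mu' = (\lambda_1 - 1, \lambda_2 - 1)$, $2$-regular and non-trivial, with $|\mu'| = n - 2$ and $2\mu'_1 - |\mu'| = 2\lambda_1 - n = k$. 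Strong induction applied to $\mu'$ then completes the argument, termination being guaranteed by the strict decrease of the symmetric-group size under each restriction.
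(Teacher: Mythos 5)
Your overall strategy — Kleshchev's modular branching rule plus induction, using one- or two-box removals and splitting cases according to whether a normal removable node exists outside the first row — is essentially the route the paper takes (the paper inducts on $n-\lambda_1$ rather than $n$, and dichotomizes directly on the parity of $\lambda_1-\lambda_2$, but the mechanism is the same). Your base-case computation giving $2\,\triv + D^{(k-1,1)}$ over $\SSS_k$ is correct for both parities of $n$.

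There is, however, a genuine gap in how you characterize and handle the tight case. You assert that when $2\lambda_1-n=k$ and every normal removable node of $\lambda$ lies in row $1$, $\lambda$ must be a two-row partition with parts of equal parity, and accordingly you write $\mu'=(\lambda_1-1,\lambda_2-1)$. This is false: take $\lambda=(12,6,2)$, $n=20$, $k=4$. Here $2\lambda_1-n=4=k$, and a signature check (residue $0$, top to bottom, gives $A,R,A$; residue $1$ gives $R,A,R,A$) shows the unique normal removable node sits in row $1$, although $\lambda$ has three parts. The correct condition defining the hard case is simply that $\lambda_1-\lambda_2$ be even, and the two-step removal produces $\mu'=(\lambda_1-1,\lambda_2-1,\lambda_3,\ldots)$ rather than a two-row partition. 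The arithmetic $2\mu'_1-|\mu'|=k$ still holds, so the idea survives, but the write-up must be repaired to allow arbitrarily many rows, and one must also verify that $\mu'$ is $2$-regular (i.e.\ $\lambda_2-\lambda_3\ge2$, which does hold in the hard case but requires an argument: if $\lambda_2-\lambda_3=1$ and $\lambda_1-\lambda_2$ is even, one finds a normal node in row $3$, so you are not in the hard case after all) and that $\mu'$ is non-trivial (true for $n\ge k+3$). A secondary, smaller gap: your generic step does not say what to do when $2\lambda_1-n>k$ but no normal node lies below row $1$ (remove from row $1$; the slack absorbs the loss), and when $\lambda=(n-1,1)$ the generic removal from row $2$ produces the forbidden $\mu=(n-1)$, so one must instead remove from row $1$ there.
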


\begin{proof}
We apply induction on $m := n-\lambda_1 \geq 1$. If $m = 1$, 
then $D^\lambda = D^{(n-1,1)}$ is the heart of the natural permutation module, and the 
statement follows easily. Let $m \geq 2$. 

\smallskip
Case 1: $\lambda_1-\lambda_2$ is odd. Then, in the terminology of \cite[Definition 0.3]{K2}, $2$ is
a normal index. Let $j \geq 2$ be the largest normal index; in particular
$j$ is a good index in the sense of \cite[Definition 0.3]{K2} again. Then by \cite[Theorem 0.5]{K2}, $D^\mu$ is a simple submodule 
of $D^\lambda|_{\SSS_{n-1}}$, where 
$$\mu = \lambda(j) := 
  (\lambda_1, \ldots ,\lambda_{j-1},\lambda_j-1,\lambda_{j+1}, \ldots ,\lambda_s)
  \vdash (n-1).$$
Note that $2\lambda_1 - (n-1)\geq k+1$ and $(n-1)-\lambda_1 = m-1$. Hence 
we can apply the induction hypothesis to $D^\mu$ restricted to $\SSS_k$.

\smallskip
Case 2: $\lambda_1-\lambda_2$ is even. Now $1$ is
a normal index. Then by \cite[Theorem 0.4]{K2}, $D^\nu$ is a composition factor 
of $D^\lambda|_{\SSS_{n-1}}$, where 
$$\nu = \lambda(1) := 
  (\lambda_1-1, \lambda_2, \ldots ,\lambda_s) \vdash (n-1).$$
Since $(\lambda_1-1)-\lambda_2$ is odd, as in Case 1 we now see that $2$ is
a normal index for $\nu$. Let $j \geq 2$ be the highest normal index of $\nu$; 
in particular $j$ is a good index. Then again by \cite[Theorem 0.5]{K2}, 
$D^\mu$ is a simple submodule of $D^\lambda|_{\SSS_{n-2}}$, where 
$$\mu = \nu(j) := 
  (\lambda_1-1, \ldots ,\lambda_{j-1},\lambda_j-1,\lambda_{j+1}, \ldots ,\lambda_s)
  \vdash (n-2).$$
Note that $2(\lambda_1-1) - (n-2) \geq k$ and $(n-2)-(\lambda_1-1) = m-1$. Hence 
we can apply the induction hypothesis to $D^\mu$ restricted to $\SSS_k$.  
\end{proof}

Using the Mullineux involution, we prove an analogue of 
Proposition \ref{ext1} for $p\neq 2$ (certainly, the most interesting case being 
$p \leq n$): 

\begin{proposition}\label{ext2}
Let $n \geq 5$ and $p \neq  2$.
\begin{enumerate}
\item[{\rm (i)}] Let $\lambda = (\lambda_1, \lambda_2, \ldots )$
be a $p$-regular partition of $n$. Suppose that  $\lambda_1 \geq (n+p+2)/2$.
Then $D^\lambda$ is irreducible over $\AAA_n$. 
\item[{\rm (ii)}] Let $V$ be an irreducible $\F\AAA_n$-module. Suppose that $V$ does 
not extend to $\SSS_n$. Then $\dim V \geq 2^{(n-p-5)/4}$.
\end{enumerate}
\end{proposition}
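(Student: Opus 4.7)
The plan is to prove (i) by means of the explicit Mullineux symbol formula recalled in Section~\ref{SPrel} (following \cite{FK}), and then to derive (ii) by combining (i) with the same \cite[Theorem 5.1]{GLT} dimension bound already used in the proof of Proposition~\ref{ext1}. Since $p \neq 2$, the criterion from Section~\ref{SPrel} says that $D^\lambda$ restricts irreducibly to $\AAA_n$ exactly when $\lambda \neq \lambda^\MB$, so (i) reduces to showing that our hypothesis on $\lambda_1$ rules out Mullineux-fixed partitions.

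For (i), the case $\lambda = (n)$ is trivial since $D^{(n)}$ is the trivial module, so assume $\lambda$ has $s \geq 2$ parts. The hypothesis $\lambda_1 \geq (n+p+2)/2$ yields $\lambda_2 + \cdots + \lambda_s \leq (n-p-2)/2$, and in particular $\lambda_1 - \lambda_2 \geq p+2$. Under this gap the first $p$-rim of $\lambda$ consists of the last $p$ boxes of the first row, so the first column of the Mullineux symbol $G(\lambda)$ has entries $h_1 = p$, $r_1 = s$, and $\epsilon_1 = 0$ (since $p \mid h_1$). Applying the formula for $G(\lambda^\MB)$ recalled in Section~\ref{SPrel}, the bottom entry of the first column of $G(\lambda^\MB)$ is $h_1 - r_1 + \epsilon_1 = p - s$. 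Hence equality $\lambda = \lambda^\MB$ would force $s = p - s$, i.e., $p = 2s$, contradicting the assumption that $p$ is odd.

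For (ii), if $V$ does not extend to $\SSS_n$, the preliminaries of Section~\ref{SPrel} give $V \cong E^\lambda_\pm$ for some $p$-regular $\lambda \vdash n$ with $\lambda = \lambda^\MB$ and $\dim V = (\dim D^\lambda)/2$. Part (i) then forces $\lambda_1 \leq (n+p)/2$, and the bound $\dim D^\lambda \geq 2^{(n-\lambda_1)/2}$ of \cite[Theorem 5.1]{GLT} used in Proposition~\ref{ext1} yields
$$\dim V \geq 2^{(n-\lambda_1)/2 - 1} \geq 2^{(n-p)/4 - 1} = 2^{(n-p-4)/4} \geq 2^{(n-p-5)/4},$$
as required. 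The main technical step is the Mullineux-symbol computation in (i): once $h_1 = p$ and $r_1 = s$ are correctly identified from the gap $\lambda_1 - \lambda_2 \geq p+2$, the odd-$p$ contradiction $p = 2s$ is immediate, and (ii) is a short dimension-counting consequence.
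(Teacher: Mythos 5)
Your argument for (i) contains a fatal error in the identification of $h_1$. The $p$-rim of $\lambda$ (in the sense of Mullineux, recalled in Section~\ref{SPrel}) is \emph{not} just the first $p$-segment. By construction, after the first $p$-segment ends in some row, the next $p$-segment begins at the rightmost rim node of the row immediately below, and this continues through all rows of $\lambda$. Thus the $p$-rim of a partition with $s \geq 2$ rows has nodes in every row and consequently $h_1 > p$. Concretely, take $p=3$, $n=11$, $\lambda=(9,2)$, which satisfies $\lambda_1 = 9 \geq (n+p+2)/2 = 8$: the $p$-rim is $\{(1,9),(1,8),(1,7),(2,2),(2,1)\}$, so $h_1 = 5 \neq 3 = p$, and $\epsilon_1 = 1$ rather than $0$. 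Your claim that $h_1 = p$, $\epsilon_1 = 0$ is wrong whenever $s \geq 2$, and the contradiction $p = 2s$ you derive from $h_1 - r_1 + \epsilon_1 = r_1$ rests entirely on this false premise. The paper's proof of (i) is genuinely different: it proceeds by induction on $n - \lambda_1$, and the heart of the argument is a delicate case analysis of whether the $p$-rim of $(\lambda_2, \lambda_3, \ldots)$ has fewer than $p$ nodes or at least $p$ nodes, combined with the relation $h_i + \epsilon_i = 2 r_i$ that $\lambda = \lambda^\MB$ forces across \emph{all} $i$, not merely $i=1$.

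Your argument for (ii) is structurally the same as the paper's (induce up, identify $\lambda = \lambda^\MB$, apply the contrapositive of (i), then invoke \cite[Theorem 5.1]{GLT}), and would be correct once (i) is established. A minor point: the contrapositive of (i) gives $\lambda_1 < (n+p+2)/2$, hence $\lambda_1 \leq (n+p+1)/2$ (not $\lambda_1 \leq (n+p)/2$ as you assert, which fails when $n+p$ is odd); using the correct bound yields exactly $\dim V \geq 2^{(n-p-5)/4}$ as claimed.
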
 

\begin{proof}
(i) Recalling the definition of the Mullineux map from Section~\ref{SPrel}, denote the partitions obtained from $\lambda$ by successively removing $p$-rims
as $\lambda^{(j)}$, $1 \leq j \leq k$. We prove the statement by induction on 
$n - \lambda_1$. Since $\lambda_1 \geq (n-\lambda_1) +p+2$ by the assumption, 
the first $p$-segment of the $p$-rim of $\lambda$ has length $p$. Assume for a contradiction 
that $\lambda = \lambda^\MB$. Then  
\begin{equation}\label{rim1}
  h_i+\epsilon_i = 2r_i
\end{equation}   
for $1 \leq i \leq k$.

\smallskip
Suppose first that the $p$-rim of $\lambda' := (\lambda_2, \lambda_3, \ldots )$ has at most
$p-1$ nodes. Write $h_1 = p+x$ and $r_1 = 1+y$, where $0 \leq x \leq p-1$ and $y$ is
the number of rows of $\lambda'$. Then according to (\ref{rim1}) we have 
\begin{equation}\label{rim2}
  p+x  \leq p+x +\epsilon_1 = h_1 + \epsilon_1 = 2r_1 = 2 +2y.
\end{equation}  
Note that $x$ is the length of the $p$-rim of $\lambda'$. Hence $y \leq x$, and so
(\ref{rim2}) yields $x \geq p-2 > 0$. In turn, this implies that $p{\not{|}}h_1$, whence
$\epsilon_1  = 1$ and (\ref{rim2}) yields $x = y = p-1$ (as $x \leq p-1$). Recall we 
are assuming that the $p$-rim of $\lambda'$ has at most $p-1$ nodes. It follows that
the $p$-rim of $\lambda'$ has exactly $p-1$ nodes and $\lambda'$ also has $p-1$ rows.
This can happen only when $\lambda' = (1^{p-1})$, a column of $p-1$ nodes. 
In this case, $\lambda^{(1)} = (\lambda_1-p)$
has one part, which is of length $\geq 2$. Hence the $p$-rim of 
$\lambda^{(1)}$ is of length $p$ (if $\lambda_1 \geq 2p$), or $z \geq 2$
(where $2p-1 \geq \lambda_1 = p+z \geq p+2$). Correspondingly, $r_2 = 1$ and 
$(h_2, \epsilon_2) = (p,0)$ or $(z,1)$. In either case
$$h_2 + \epsilon_2 \geq z+1 \geq 3 > 2r_2,$$
contrary to (\ref{rim1}).

\smallskip
Assume now that the $p$-rim of $\lambda'$ has at least $p$ nodes. Then, aside from 
the first $p$-segment contained in the first row, the $p$-rim of $\lambda$ contains
at least $p$ nodes of $\lambda'$. It follows that the condition
$\lambda_1 \geq (n-\lambda_1)+p+2$ also holds for $\lambda^{(1)}$. By the induction
hypothesis, $\lambda^{(1)}$ is not equal to its Mullineux dual, i.e. 
$h_i - r_i + \epsilon_i \neq r_i$ for some $i \geq 2$, again contradicting (\ref{rim1}).
 
 \smallskip
(ii) By assumption, $W := \Ind^{\SSS_n}_{\AAA_n}(V)$ is an irreducible 
$\F\SSS_n$-module and  $\dim W=2 \dim V$. Let 
$\lambda = (\lambda_1, \lambda_2, \ldots )$ be the $p$-regular partition of $n$ 
corresponding to $W$. Since $W$ is reducible over $\AAA_n$, $\lambda_1 \leq (n+p+1)/2$
by (i). It now follows by \cite[Theorem 5.1]{GLT} that
$$\dim W \geq 2^{\frac{n-\lambda_1}{2}} \geq 2^{\frac{n-(n+p+1)/2}{2}} = 2^{\frac{n-p-1}{4}},$$
which implies the result. 
\end{proof}

Here is another version of Proposition \ref{ext2}: 

\begin{proposition}\label{ext3}
Let $p > 2$, $n \geq 5$, 
and $\lambda = (\lambda_1, \lambda_2, \ldots )$
be a $p$-regular partition of $n$. Suppose that there is some $s \geq 1$ such that
$$\lambda_1-\lambda_2 \geq \lambda_2-\lambda_3 \geq \ldots 
   \geq \lambda_s-\lambda_{s+1} \geq p$$
and 
$$\sum^{s}_{i=1}\lfloor \frac{\lambda_i - \lambda_{i+1}}{p} \rfloor >  \frac{n}{2p-1}.$$
Then $D^\lambda$ is irreducible over $\AAA_n$. 
In particular, if
$$\lambda_1 \geq \lambda_2 + p  \left\lceil \frac{n+1}{2p-1} \right\rceil,$$
then $D^\lambda$ is irreducible over $\AAA_n$. 
\end{proposition}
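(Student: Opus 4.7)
The plan is to assume $\lambda=\lambda^\MB$---equivalent, for $p>2$, to $D^\lambda$ being reducible over $\AAA_n$---and derive a contradiction from the self-duality relation $h_i+\epsilon_i=2r_i$ at every column of the Mullineux symbol $G(\lambda)$, by induction on $n$. The crucial recursive identity (used implicitly in the proof of Proposition~\ref{ext2}(i)) is that whenever $\lambda_1-\lambda_2\geq p$ one has
$$h_1(\lambda)=p+h_1(\lambda'),\qquad r_1(\lambda)=1+r_1(\lambda'),$$
with $\lambda':=(\lambda_2,\lambda_3,\ldots)$, because the first $p$-segment of the $p$-rim of $\lambda$ lies entirely in row $1$ and the remainder is precisely the $p$-rim of $\lambda'$. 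Substituting into self-duality yields
$$h_1(\lambda')+\epsilon_1 \;=\; 2r_1(\lambda')-(p-2),$$
a strict deficit when $p\geq 3$.

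I would then split into cases on $h_1(\lambda')$, mirroring the proof of Proposition~\ref{ext2}(i). In Case~1, $h_1(\lambda')\leq p-1$: the parity and divisibility bookkeeping from that proof applies verbatim, forcing $\lambda'=(1^{p-1})$, so $\lambda^{(1)}=(\lambda_1-p)$; computing the second column of the symbol gives $(h_2,r_2)=(p,1)$ or $(z,1)$ with $z\geq 2$, and in either case $h_2+\epsilon_2\geq 3 > 2=2r_2$, contradicting self-duality. (Note that under the hypothesis with $s\geq 2$ the same recursive identity applied to $\lambda'$ forces $h_1(\lambda')\geq p$, so Case~1 is only relevant when $s=1$.) In Case~2, $h_1(\lambda')\geq p$, whence $h_1(\lambda)\geq 2p$ and $|\lambda^{(1)}|\leq n-2p$. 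The differences $a^{(1)}_i:=\lambda^{(1)}_i-\lambda^{(1)}_{i+1}$ agree with $a_i$ for $i<s$, while $a^{(1)}_s\geq a_s-p$ (possibly reducing the length $s$ to $s-1$ if $a_s<2p$), so
$$\sum_i\lfloor a^{(1)}_i/p\rfloor \;\geq\; \sum_i\lfloor a_i/p\rfloor-1.$$
Since $(n-2p)/(2p-1) < n/(2p-1)-1$, the strict inequality in the hypothesis persists for $\lambda^{(1)}$ with parameter $|\lambda^{(1)}|$, and induction closes the case.

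The \emph{in particular} assertion then follows by taking $s=1$: the bound $\lambda_1\geq\lambda_2+p\lceil(n+1)/(2p-1)\rceil$ yields $\lfloor(\lambda_1-\lambda_2)/p\rfloor\geq\lceil(n+1)/(2p-1)\rceil>n/(2p-1)$, so both clauses of the main hypothesis hold. The main obstacle I foresee is the bookkeeping in Case~2, namely tracking how the tail $\lambda^{[s]}:=(\lambda_{s+1},\lambda_{s+2},\ldots)$ and the differences $a_i$ transform under one $p$-rim removal (in particular, verifying that the non-increasing condition on the $a_i$ is preserved and that the possible drop $s\mapsto s-1$ still leaves a valid instance of the hypothesis). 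The single unit of slack $2p-(2p-1)=1$ between the nodes removed at one step ($\geq 2p$ in Case~2) and the denominator $2p-1$ is precisely what makes this bookkeeping close, which explains the appearance of the denominator $2p-1$ in the hypothesis; Case~1 then reduces to the endgame already carried out in the proof of Proposition~\ref{ext2}(i).
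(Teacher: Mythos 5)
Your approach is genuinely different from the paper's. The paper's proof is a single global counting argument: it assumes $\lambda=\lambda^\MB$, so that $\sum_{i}(h_i-2r_i+\epsilon_i)=0$, then classifies \emph{all} $p$-segments across \emph{all} successive $p$-rim removals into horizontal-of-length-$p$, non-horizontal-of-length-$p$, and short segments, with counts $a$, $b_p$, $b_j$ satisfying $n=pa+\sum_j jb_j$, and finally derives the lower bound $\sum_i(h_i-2r_i+\epsilon_i)\geq \frac{p-2}{p-1}\bigl((2p-1)a-n\bigr)$. The hypothesis forces $a>(n)/(2p-1)$ (via the observation that the first $s$ rows contribute at least $\sum_i\lfloor a_i/p\rfloor$ horizontal full segments over all the successive rims), giving a strictly positive total and a contradiction. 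There is no induction. Your proposal instead inducts on $n$, peeling off one $p$-rim at a time and showing the hypothesis is inherited by $\lambda^{(1)}$ (with the clean quantitative margin: removing $\geq 2p$ boxes while $\sum\lfloor a_i/p\rfloor$ drops by at most $1$, and $(n-2p)/(2p-1)<n/(2p-1)-1$). This mirrors the structure of the paper's proof of Proposition~\ref{ext2}(i) rather than the paper's own argument for Proposition~\ref{ext3}. The paper's global count is arguably more economical because it bypasses all the transmission-of-hypotheses bookkeeping; your version has the advantage of making the persistence of the combinatorial condition under $p$-rim removal explicit.

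Your Case~2 arithmetic is correct: since $a_i\geq p$ for $i\leq s$, exactly $p$ boxes are removed from each of rows $1,\ldots,s$ by the first $p$-rim, so $a^{(1)}_i=a_i$ for $i<s$, $a^{(1)}_s\in[a_s-p,a_s]$, and the non-increasing property and the inequality on the sum carry over, with $|\lambda^{(1)}|\leq n-2p$; your observation that Case~1 can only occur when $s=1$ is also right (since $s\geq 2$ forces a horizontal first segment for $\lambda'$). The one serious loose end is the precise inductive statement. As written you would be invoking Proposition~\ref{ext3} itself for $\lambda^{(1)}$, which requires $\lambda^{(1)}$ to be $p$-regular and $|\lambda^{(1)}|\geq 5$. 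You should either cite the standard fact that removing the $p$-rim of a $p$-regular partition yields a $p$-regular partition, or (cleaner) rephrase the induction as a statement about Mullineux symbols --- namely, that under the hypotheses the relations $h_i+\epsilon_i=2r_i$ cannot hold for all $i$ --- which is indifferent to $p$-regularity of the intermediate shapes and to the cosmetic bound $n\geq 5$. You should also verify the small cases directly (e.g.\ when $|\lambda^{(1)}|<5$); as you note implicitly, the hypothesis forces $n\geq 2p$ in Case~2, which is enough to keep the inductive target in range, but this needs to be said. Finally, in Case~1 the endgame of the proof of Proposition~\ref{ext2}(i) uses $\lambda_1-p\geq 2$; under your hypothesis with $\lambda'=(1^{p-1})$ one actually gets $\lambda_1\geq 2p+1$ (the inequality $\lfloor(\lambda_1-1)/p\rfloor>(\lambda_1+p-1)/(2p-1)$ fails for $\lambda_1\leq 2p$), so this holds, but it should be checked rather than borrowed unexamined from the stronger hypothesis of Proposition~\ref{ext2}(i).
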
 

\begin{proof}
Assume that $D^\lambda$ is reducible over $\AAA_n$. Then $\lambda = \lambda^\MB$ 
and so 
\begin{equation}\label{hr}
  \sum^k_{i=1}(h_i-2r_i+\epsilon_i) = 0.
\end{equation}
We will estimate $h_1-2r_1+\epsilon_1$ by 
going down the $p$-segments of the $p$-rim of $\lambda$. Since 
$\lambda_1-\lambda_2 \geq p$, the first $p$-segment consists of $p$ nodes
of the first row and so contributes $p-2$ to $h_1-2r_1+\epsilon_1$. More 
generally, any horizontal $p$-segment of length $p$ contributes $p-2$ to 
$h_1-2r_1+\epsilon_1$. On the other hand, since $\lambda$ is $p$-regular, any 
non-horizontal $p$-segment of length $p$ has height $\leq (p-1)$ and so it 
contributes at least $p-2(p-1) = 2-p$ to $h_1-2r_1+\epsilon_1$.
Suppose the $p$-rim also has a $p$-segment of length $j$ less than $p$. Then it
must be the last $p$-segment, and $\epsilon_1 = 1$. So the contribution of
this $p$-segment to $h_1-2r_1+\epsilon_1$ is $\geq j-2j+1 = 1-j$.

As the $p$-rims are removed in succession, 
let $a$ be the total number of horizontal $p$-segments of length $p$,
$b_p$ be the total number of non-horizontal $p$-segments of length $p$, and 
$b_j$ be the total number of $p$-segments of length $1 \leq j < p$, so
that $n = pa+\sum^{p}_{j=1}jb_j$. Applying the above arguments to all successive
$p$-rims of $\lambda$ we have that     
\begin{align*}\sum^k_{i=1}(h_i-2r_i+\epsilon_i) 
& \geq (p-2)a-(p-2)b_p - \sum^{p-1}_{j=1}(j-1)b_j\\
&\geq (2p-2)a + (2b_p + \sum^{p-1}_{j=1}b_j) - (pa+\sum^{p}_{j=1}jb_j) 
\\
&\geq (2p-2)a + (2b_p + \sum^{p-1}_{j=1}b_j) - n.
   \end{align*}
Observe that 
$$2b_p + \sum^{p-1}_{j=1}b_j \geq \frac{\sum^{p}_{j=1}jb_j}{p-1} 
  = \frac{n-pa}{p-1}.$$

Under the hypothesis, we can find an integer $t \geq (n+1)/(2p-1)$ such that 
$$\sum^{s}_{i=1}\lfloor \frac{\lambda_i - \lambda_{i+1}}{p} \rfloor \geq t.$$
Now observe that at least $t$ horizontal $p$-segments from the first $s$ rows
of $\lambda$ belong to these successive $p$-rims. Thus   $a \geq t \geq (n+1)/(2p-1)$. Hence,
$$\sum^k_{i=1}(h_i-2r_i+\epsilon_i) \geq (2p-2)a + \frac{n-pa}{p-1} -n \geq 
  \frac{p-2}{p-1} ((2p-1)a-n) > 0,$$
contradicting (\ref{hr}).
\end{proof}

\section{Structure of permutation modules} 
Throughout the section $n\geq 6$ is an {\it even} integer and $p=2$.  

We will study permutation modules $M_r$, mainly  for $r=1,2,3$. 
For $n\geq 2r$, let $\Specht_r\subset M_r$ denote the Specht module $S^{(n-r,r)}$
and (assuming $n>2r$)  
let $D_r=D^{(n-r,r)}$  be the unique simple quotient of $\Specht_r$.
Let $T_r\in M_r$ be the sum of all $r$-element sets.
Let 
$$\eta_{r,s}:M_r\to M_s$$ denote the incidence homomorphism
sending an $r$-set to the sum of $s$-sets incident with (i.e. containing or contained in) it. 
By \cite[Corollary 17.18]{JamesBook}, 
\begin{equation}\label{specht_module}
  \Specht_r=\cap_{t=0}^{r-1}\Ker\eta_{r,t}.
\end{equation}
We denote by $M'_r$ the {\em augmentation module}, i.e. the  submodule $\Ker\eta_{r,0}$ of $M_r$
(spanned by differences of pairs of basis elements).

The space $M_r$ has a natural bilinear form $\langle \cdot,\cdot\rangle_r$, with respect to which the standard basis is orthonormal. 
If we identify $M_r$ and $M_s$ with their dual spaces, using the corresponding bilinear forms, then $\eta_{s,r}$ is the dual map of $\eta_{r,s}$.
In particular, $\eta_{s,r}$ is injective iff  $\eta_{r,s}$ is surjective
and vice versa. Also $\image\eta_{s,r}\cong\image\eta_{r,s}^*$ as $\F \SSS_n$-modules.
We have 
\begin{equation}\label{adjoint}
\langle x, \eta_{r,s}(y) \rangle_s =\langle \eta_{s,r}(x), y\rangle_r \qquad (x \in M_s, y\in M_r).
\end{equation}
The ranks of the maps $\eta_{r,s}$ are given in \cite{Wilson}. We state the
special cases that we need of of this general result. 

\begin{lemma}\label{wilson} 
For $r\leq \min\{s, n-s\}$ we have
$$\rank_\F \eta_{r,s}= \sum_{1 \leq i \leq r,~\binom{s-i}{r-i}\mbox{ \tiny{is odd}}} 
                    \left(\binom{n}{i}-\binom{n}{i-1}\right).$$
In particular, $\eta_{1,3}$ is injective, and 
$$\rank_\F\eta_{1,2}=n-1=\dim \Specht_1,\quad 
  \rank_\F\eta_{2,3}=1+\frac{n(n-3)}{2}=1+\dim \Specht_2.$$
\end{lemma}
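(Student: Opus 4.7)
My approach is to invoke the general theorem of Wilson \cite{Wilson} (the content of the displayed rank formula) and then read off each special case by direct arithmetic on parities of binomial coefficients, combined with the hook-length identity $\dim S^{(n-r,r)} = \binom{n}{r} - \binom{n}{r-1}$ for $r \leq n/2$ that I would quote from James' book.

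For $\eta_{1,2}$, only $i = 1$ contributes in the formula (since $\binom{1}{0} = 1$ is odd), yielding $\binom{n}{1} - \binom{n}{0} = n - 1$, which coincides with $\dim \Specht_1$. For $\eta_{2,3}$, I would observe that $\binom{2}{1} = 2$ is even, so $i = 1$ drops out, while both $\binom{3}{2} = 3$ and $\binom{1}{0} = 1$ are odd; summing the $i = 0$ and $i = 2$ contributions gives $1 + \bigl(\binom{n}{2} - \binom{n}{1}\bigr) = 1 + n(n-3)/2$, matching $1 + \dim \Specht_2$. For $\eta_{1,3}$, both $\binom{3}{1} = 3$ and $\binom{2}{0} = 1$ are odd, so the $i = 0$ and $i = 1$ terms combine to $\binom{n}{0} + \bigl(\binom{n}{1} - \binom{n}{0}\bigr) = n = \dim M_1$, hence injectivity.

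As a safety net for the injectivity claim (in particular, to avoid any ambiguity about whether the index $i = 0$ is included in the summation range), I would also record the following short self-contained argument: if $v = \sum_i c_i \{i\} \in \Ker \eta_{1,3}$, then $c_i + c_j + c_k = 0$ in $\F$ for every triple $\{i,j,k\}$, so comparing triples $\{i,j,k\}$ and $\{i,j,k'\}$ forces all coefficients equal to some common value $c$, and then any fixed triple yields $3c = c = 0$. The only obstacle is routine parity bookkeeping, so I anticipate no substantive difficulty beyond correctly spelling out the identifications with $\dim \Specht_r$ via the hook-length formula.
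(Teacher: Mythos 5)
The paper supplies no proof for this lemma: the rank formula is quoted directly from \cite{Wilson}, and the particular values are left as arithmetic. Your approach is therefore essentially the paper's, but you make the arithmetic explicit and, usefully, you spotted a genuine typo in the displayed formula. As printed, the sum runs over $1 \leq i \leq r$, but for the stated consequences to hold (notably $\rank_\F \eta_{2,3} = 1 + n(n-3)/2$ rather than $n(n-3)/2$, and the injectivity of $\eta_{1,3}$), the index must run over $0 \leq i \leq r$ with the convention $\binom{n}{-1}=0$; that is in fact what Wilson's theorem gives, and your $i=0$ contributions of $\binom{n}{0}-\binom{n}{-1}=1$ are exactly what reconcile the formula with the claimed values. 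Your direct kernel computation for $\eta_{1,3}$, showing that $c_i+c_j+c_k=0$ for all triples forces all $c_i$ equal and then $3c=c=0$ in characteristic $2$, is a correct and self-contained check of the injectivity claim and a sensible precaution given the typo.
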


\begin{lemma} \label{M1} $M_1$ is a uniserial $\F \SSS_n$-module with socle layers 
$\F\cdot T_1 \cong \triv$, $D_1$, $\triv$. 
\end{lemma}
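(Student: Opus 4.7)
The plan is to exhibit the candidate filtration $0 \subset \F T_1 \subset \Specht_1 \subset M_1$ with socle layers $\F T_1\cong\triv$, $D_1$, $\triv$ from bottom to top, and then prove it is the unique composition series by pinning down the head of $M_1$.

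First, since $n$ is even and $p=2$, one has $\eta_{1,0}(T_1) = n\cdot 1 = 0$ in $\F$, so $T_1\in\Specht_1$. The non-zero $\SSS_n$-invariant $T_1$ then spans a trivial submodule $\F T_1\subset\Specht_1$. For the middle layer I would invoke the standard fact that $\Specht_1 = S^{(n-1,1)}$ has simple head $D^{(n-1,1)} = D_1$ and composition factors $\triv$ and $D_1$ (of dimensions $1$ and $n-2$); combined with the existence of the trivial submodule $\F T_1$ and the fact that $\Specht_1$ cannot be semisimple (as $D_1\not\cong\triv$), this forces $\Specht_1$ to be uniserial with socle $\F T_1$ and head $D_1$. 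The top layer is immediate: $M_1/\Specht_1\cong\triv$ by definition of $\Specht_1 = \Ker\eta_{1,0}$.

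To upgrade this filtration to a uniserial structure on $M_1$ it suffices to show that $\Specht_1$ is the unique maximal submodule of $M_1$, i.e.\ $\head M_1\cong\triv$. Writing $M_1\cong\Ind^{\SSS_n}_{\SSS_{n-1}}\triv$ and applying Frobenius reciprocity gives $\Hom_{\SSS_n}(M_1,L)\cong L^{\SSS_{n-1}}$ for every irreducible $L$. For $L=\triv$ this is one-dimensional; for $L=D_1$ I would establish the vanishing $D_1^{\SSS_{n-1}} = 0$ by a direct calculation inside $\Specht_1/\F T_1$. Namely, if $v = \sum_i c_i e_i\in\Specht_1$ represents an $\SSS_{n-1}$-fixed class (where $\SSS_{n-1} = \SSS_{\{1,\dots,n-1\}}$), then for every transposition $\sigma=(i\,j)$ with $i,j\leq n-1$ one has $(\sigma-1)v = (c_i+c_j)(e_i+e_j)$ in characteristic $2$, and this lies in $\F T_1$ only when $c_i = c_j$. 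Hence $c_1 = \ldots = c_{n-1} = c$ for some $c\in\F$, and the augmentation relation $\sum_i c_i = 0$ combined with $n$ even (so $n-1$ odd) forces $c_n = c$. Then $v = cT_1$, and its image in $D_1$ vanishes.

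With $\Hom_{\SSS_n}(M_1,D_1) = 0$ and $\dim\Hom_{\SSS_n}(M_1,\triv) = 1$ in hand, the head of $M_1$ is one-dimensional trivial, so $\Specht_1$ is the unique maximal submodule; combined with the uniseriality of $\Specht_1$ established above, this yields the desired uniserial structure on $M_1$. The only nontrivial step is the vanishing $D_1^{\SSS_{n-1}} = 0$; everything else is either standard representation theory of Specht modules or immediate from the definitions.
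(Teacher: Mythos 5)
Your proof is correct. The paper gives no argument here (it simply states the result is ``well known and easy to show''), so there is no paper proof to compare against; your argument is a complete and standard one. Two minor remarks. First, when arguing that $\Specht_1$ is uniserial, the cleanest phrasing is that the $p$-regular partition $(n-1,1)$ forces $\head(\Specht_1)\cong D_1$ to be simple, so $\rad(\Specht_1)$ is the unique maximal submodule, and since it is $1$-dimensional it must be the fixed line $\F T_1$; this avoids the slightly oblique ``cannot be semisimple'' step. Second, in the computation of $D_1^{\SSS_{n-1}}$, after deducing $c_1=\cdots=c_{n-1}=c$ you use the augmentation relation $\sum_i c_i=0$; one should note explicitly that this relation is available because $v$ was chosen inside $\Specht_1=\Ker\eta_{1,0}$, which is where representatives of classes in $D_1=\Specht_1/\F T_1$ live — you did set this up correctly, it just deserves to be flagged since it is exactly where the parity of $n$ enters.
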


\begin{proof}This is well known and easy to show.
\end{proof}

Let $\F\SSS_n$-module $Q$ be defined by the short exact sequence 
\begin{equation}
\label{ses1}
0\to\F\cdot T_1\to M_1\to Q\to 0.
\end{equation}
In fact, $Q\cong \Specht_1^*$. 

The following lemma can be deduced from \cite[Theorem (1.1)]{MO}, but we give an independent proof for the reader's convenience:

\begin{lemma} \label{M2} As $\F \SSS_n$-modules, $\image \eta_{1,2} \cong Q$ and 
$M_2$ has the following structure:
\begin{enumerate}
\item[(i)] If $n \equiv 0\pmod 4$ then the composition factors of
$M_2$ are $\triv$ (twice), $D_1$ (twice) and $D_2$ (once).  Furthermore, $M_2/(\image\eta_{1,2}+\F\cdot T_2)\cong S_2^*$, whose socle is $D_2$.  

\item[(ii)] If $n \equiv 2\pmod 4$ then $M_2\cong \F \cdot T_2\oplus M_2'$,
and  $M_2'$ is uniserial with socle layers $D_1$, $\triv$, $D_2$, $\triv$, $D_1$.
\end{enumerate}
\end{lemma}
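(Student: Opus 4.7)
The plan is first to identify $\image \eta_{1,2}$, then determine the composition factors of $M_2$, and finally analyze the submodule structure case by case. By Lemma \ref{wilson}, $\rank \eta_{1,2} = n-1$, and direct inspection of $\eta_{1,2}(\sum c_i \{i\}) = \sum_{i<j}(c_i+c_j)\{i,j\}$ shows $\ker \eta_{1,2} = \F\cdot T_1$; hence $\image \eta_{1,2} \cong M_1/\soc(M_1) \cong Q$ by Lemma \ref{M1}. For the composition factors of $M_2$, Young's rule gives
\[
[M_2] = [S^{(n)}] + [S^{(n-1,1)}] + [S^{(n-2,2)}]
\]
in the Grothendieck group; Lemma \ref{M1} yields $[S^{(n-1,1)}] = [\triv]+[D_1]$, and the standard characteristic-$2$ description of the two-row Specht module (cf.\ \cite[\S24]{JamesBook} and \cite{MO}) gives $[S^{(n-2,2)}] = [D_1]+[D_2]$ when $n \equiv 0 \pmod 4$ and $[S^{(n-2,2)}] = [\triv]+[D_1]+[D_2]$ when $n \equiv 2 \pmod 4$, yielding the composition factor lists in (i) and (ii).

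Next, we record the key identities in $\F$ for $n$ even: $\eta_{2,0}(T_2) = \binom{n}{2}$, $\eta_{2,1}(T_2) = T_1$, $\eta_{2,0}(\eta_{1,2}(\{i\})) = 1$, and $\eta_{2,1}(\eta_{1,2}(\{i\})) = T_1$. From Lemma \ref{wilson} and (\ref{adjoint}) we obtain $\image \eta_{2,1} = M_1' = S_1$ (uniserial with layers $\triv, D_1$) together with the orthogonality relations $(\image \eta_{1,2})^\perp = \ker \eta_{2,1}$ and $(\F\cdot T_2)^\perp = M_2'$, whence $(\image \eta_{1,2} + \F\cdot T_2)^\perp = \Specht_2$ by (\ref{specht_module}). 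In case (i) ($n \equiv 0 \pmod 4$), $T_2 \in M_2'$; since $\image \eta_{1,2} \cong Q$ has socle $D_1$ and hence no trivial submodule, $\F\cdot T_2 \cap \image \eta_{1,2} = 0$ and $\image \eta_{1,2} + \F\cdot T_2$ has dimension $n$. Self-duality $M_2 \cong M_2^*$ induced by the standard bilinear form then gives $M_2/(\image \eta_{1,2} + \F\cdot T_2) \cong \Specht_2^* = S_2^*$, whose socle, being dual to the simple head $D_2$ of $\Specht_2$, equals $D_2$.

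In case (ii) ($n \equiv 2 \pmod 4$), $\eta_{2,0}(T_2) = 1$, so $T_2 \notin M_2'$ and $M_2 = \F\cdot T_2 \oplus M_2'$ as $\F\SSS_n$-modules. The bilinear form restricts non-degenerately to $M_2'$, making $M_2'$ self-dual. The map $\eta_{2,1}|_{M_2'}$ yields the short exact sequence $0 \to \Specht_2 \to M_2' \to S_1 \to 0$. Any simple submodule of $M_2'$ must lie in $\Specht_2$ (a nonzero image in $S_1$ would give $\triv = \soc(S_1)$ as a submodule, but $M_2'$ has no $\SSS_n$-fixed vector since $M_2^{\SSS_n} = \F\cdot T_2$). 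A case analysis on potential simple summands $\triv, D_1, D_2$ of $\soc(\Specht_2)$ (using that $\image \eta_{1,2} \cap \Specht_2 \cong D_1$ sits in the socle; $\triv$ is excluded by absence of trivial submodules; and $D_2 \subset \soc(\Specht_2)$ would force $\Specht_2 \cong D_2 \oplus U$ with $U$ either containing a trivial submodule or admitting a trivial quotient, both contradicting $\Hom(\Specht_2,\triv)=0$ from $\head(\Specht_2) = D_2$) gives $\soc(M_2') = \soc(\Specht_2) = D_1$, so by self-duality $\head(M_2') = D_1$. A parallel analysis on $M_2'/D_1$ (using that any $D_1$-submodule or $D_2$-submodule would lift to a uniserial submodule of $M_2'$ whose existence forces dimensional contradictions with the uniserial structures of $\Specht_2$ and $S_1$) leaves only copies of $\triv$ in $\soc(M_2'/D_1)$. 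Combining this with the composition multiplicities $2[\triv] + 2[D_1] + [D_2]$ and the palindrome constraint $L_i \cong L_{k+1-i}^*$ on Loewy layers of the self-dual module $M_2'$ (with simples self-dual), the only compatible structure is uniserial of length $5$ with socle layers $D_1, \triv, D_2, \triv, D_1$.

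The main obstacle is the uniseriality assertion in case (ii): one must rule out additional simples in each intermediate socle layer and any non-split decompositions. The key ingredients are the absence of $\SSS_n$-fixed vectors in $M_2'$, the identification of $\soc(\Specht_2)$ via $\image\eta_{1,2}\cap\Specht_2$, the vanishing $\Hom(\Specht_2,\triv) = 0$ coming from $\head(\Specht_2) = D_2$, and the palindrome constraint on Loewy layers imposed by the self-duality of $M_2'$, which together pin down both the socle and radical filtrations.
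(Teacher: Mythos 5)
Your proof is correct and follows essentially the same approach as the paper's: identify $\image\eta_{1,2}\cong Q$ via the rank from Lemma~\ref{wilson}, take the composition factors from the known decomposition numbers for two-row partitions in characteristic $2$, split off $\F\cdot T_2$ when $\binom{n}{2}$ is odd, and pin down the structure of $M_2'$ from the uniseriality of $\Specht_2$ together with self-duality.

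A few remarks on where your exposition diverges in detail from the paper's (all minor). First, the paper simply cites \cite{James} for the composition factors, whereas you reconstruct them via Young's rule; both are fine. Second, for the socle of $\Specht_2$ in case~(ii), the paper argues directly: $(M_2')^{\SSS_n}=0$ rules out $\triv$ in the socle, and $\head(\Specht_2)\cong D_2$ with multiplicity one rules out $D_2$, so $\soc(\Specht_2)=D_1$ and uniseriality of $\Specht_2$ is immediate; your route through the explicit identification $\image\eta_{1,2}\cap\Specht_2=\eta_{1,2}(M_1')\cong D_1$ is a correct (and somewhat more concrete) alternative. Third, the short exact sequence $0\to\Specht_2\to M_2'\to\Specht_1\to 0$ that you work with is implicit in the paper but not named; it is a convenient organizing device. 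Finally, your closing appeal to a ``palindrome constraint $L_i\cong L_{k+1-i}^*$'' on Loewy layers is phrased loosely --- for a self-dual module the socle layers are dual to the radical layers read in reverse, which is not literally a palindrome condition on a single filtration --- but the actual argument needed here (namely, $\soc(M_2')=D_1$ forces $\head(M_2')=D_1$ by self-duality, $\soc(M_2'/D_1)$ is a single copy of $\triv$ by the analysis against $\Specht_2$ and $\Specht_1$, and then the remaining layers $D_2$, $\triv$, $D_1$ are forced by the composition multiplicities) does go through, and is what the paper compresses into ``follow from the self-duality of $M_2'$''. So the proposal is sound; it just spells out in more words a deduction the paper leaves to the reader.
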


\begin{proof}
The dimension of $\image\eta_{1,2}$ is given by Lemma~\ref{wilson},
from which we see that it is isomorphic to $Q$. 
The composition factors are given by \cite{James}. 

(i) The second statement in (i) now follows using the facts that the dual Specht module $S^*_2$ is a quotient of $M_2$ and $S^*_2$ is uniserial with socle layers  $D_2$, $D_1$. 

(ii) The submodule $\F \cdot T_2$ is a direct summand of $M_2$ because $\binom{n}{2}$ is odd. Then  $M_2' =\Ker\eta_{2,0}$ is the complementary summand. 
The  composition factors of $M_2'$ are  $D_1$, $\triv$, $D_2$, $\triv$, $D_1$. Also we have
$\Specht_2\subseteq M_2'$. 
The composition factors of $\Specht_2$ are $D_2$, $D_1$, $\triv$ and $\Specht_2$
has a simple head isomorphic to $D_2$. Since ${M_2'}^{\SSS_n}=0$, it follows that
$\Specht_2$ must be uniserial with socle layers $D_1$, $\triv$, $D_2$.
The uniseriality of $M_2'$ and its socle layers now follow from the self-duality
of $M_2'$.  
\end{proof}

\begin{lemma}\label{M3}  The $\F \SSS_n$-module $M_3$ has the following structure.
\begin{enumerate} 
\item[(i)] If $n\equiv 0\pmod 4$, then $M_3\cong M_1\oplus U$, where $U$
is uniserial with socle layers $D_2$, $D_1$, $D_3$, $D_1$, $D_2$.

\item[(ii)] If $n\equiv 2\pmod 4$ then $\image \eta_{2,3}$ is uniserial
with socle layers $\triv$, $D_2$, $\triv$, $D_1$.
The composition factors of $M_3$ are $\triv$ (with multiplicity $4$), 
$D_1$ (twice), $D_2$ (twice) and $D_3$ (once). Also, $\soc(M_3)=\F\cdot T_3$.
\end{enumerate}
\end{lemma}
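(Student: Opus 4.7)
The plan is to reduce the analysis of $M_3$ to the incidence maps $\eta_{1,3}$, $\eta_{3,1}$, $\eta_{2,3}$, $\eta_{3,2}$ connecting it to the already-understood modules $M_1$ (Lemma \ref{M1}) and $M_2$ (Lemma \ref{M2}), together with the self-duality of each $M_r$ under $\langle\cdot,\cdot\rangle_r$ and Wilson's rank formula (Lemma \ref{wilson}). The dichotomy between the two cases is controlled by a direct count on a basis vector of $M_1$:
\begin{equation*}
\eta_{3,1}\eta_{1,3}(\{i\})=\tbinom{n-1}{2}\{i\}+(n-2)\sum_{j\neq i}\{j\},
\end{equation*}
which equals $\id_{M_1}$ when $n\equiv 0\pmod 4$ (since then $\binom{n-1}{2}$ is odd while $n-2$ is even) and vanishes when $n\equiv 2\pmod 4$ (both coefficients being even).

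For part (i), the identity $\eta_{3,1}\eta_{1,3}=\id_{M_1}$ makes $e:=\eta_{1,3}\eta_{3,1}\in\End_{\F\SSS_n}(M_3)$ an idempotent with image $\image\eta_{1,3}\cong M_1$ (by injectivity of $\eta_{1,3}$ from Lemma \ref{wilson}) and kernel $U:=\Ker\eta_{3,1}$, giving $M_3=M_1\oplus U$. Subtracting the composition factors $\triv,D_1,\triv$ of $M_1$ from the composition factors of $M_3$ (known from \cite{James}) leaves $U$ with the five composition factors $D_1,D_1,D_2,D_2,D_3$. As a summand of the self-dual $M_3$, the module $U$ is itself self-dual, so $\head U\cong\soc U$. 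The computation $\eta_{3,1}(T_3)=\binom{n-1}{2}T_1=T_1\neq 0$ shows $T_3\notin U$, so $U^{\SSS_n}=0$ and $\soc U$ contains no trivial constituent. The inclusion $\Specht_3=\Ker\eta_{3,0}\cap\Ker\eta_{3,1}\cap\Ker\eta_{3,2}\subseteq U$, together with the standard description of the socle of the two-row Specht module $S^{(n-3,3)}$ in characteristic $2$, pins down $\soc U=D_2$. Self-duality then forces $\head U=D_2$, and iterating the same arguments on $U/\soc U$ (with composition factors $D_1,D_3,D_1$) completes the uniserial description with the stated socle layers.

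For part (ii), $\eta_{3,1}\eta_{1,3}=0$ forces $\image\eta_{1,3}\subseteq U$, and the splitting of (i) is unavailable. Instead I analyse $\image\eta_{2,3}\subseteq M_3$ directly. Lemma \ref{M2}(ii) gives $M_2=\F T_2\oplus M_2'$ since $\binom{n}{2}$ is odd. Consider
\begin{equation*}
X:=\sum_{2\leq i<j\leq n}\{i,j\}\in M_2,
\end{equation*}
the sum of all $2$-subsets of $\{2,\ldots,n\}$. Its weight $\binom{n-1}{2}$ is even when $n\equiv 2\pmod 4$, so $X\in\Ker\eta_{2,0}=M_2'$; and for any $3$-set $B$ the number of edges of $B$ lying in $X$ is $1$ if $1\in B$ and $3$ if $1\notin B$, both odd, so $\eta_{2,3}(X)=T_3$. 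Hence $T_3\in\eta_{2,3}(M_2')$, and therefore $\image\eta_{2,3}=\eta_{2,3}(M_2')\cong M_2'/\Ker(\eta_{2,3}|_{M_2'})$. By Lemma \ref{wilson} this kernel has dimension $n-2$, and inside the uniserial $M_2'$ (whose submodule lattice is linearly ordered) this uniquely identifies it as the socle $D_1$ of $M_2'$. Thus $\image\eta_{2,3}\cong M_2'/\soc M_2'$, which is uniserial with socle layers $\triv,D_2,\triv,D_1$ as required. The composition factor list for $M_3$ then follows from \cite{James}, and $\soc M_3=\F T_3$ is obtained by combining $M_3^{\SSS_n}=\F T_3$ with Frobenius reciprocity $\Hom_{\F\SSS_n}(D^\mu,M_3)\cong(D^\mu|_{\SSS_{n-3,3}})^{\SSS_{n-3,3}}$ and a direct check that no nontrivial simple $D^\mu$ appearing in $M_3$ has a nonzero $\SSS_{n-3,3}$-fixed vector.

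The main obstacles will be: in case (i), identifying $\soc U=D_2$ rather than $D_1$ or $D_3$, which requires fine knowledge of the structure of two-row Specht modules in characteristic $2$ and a careful reading off of the successive socle layers; in case (ii), the crucial nontrivial step is the explicit element $X$ above, which is exactly what prevents $\F T_3$ from splitting off as a direct summand of $\image\eta_{2,3}$ and thus ensures its uniserial structure. Self-duality of the permutation modules is used repeatedly to pass between socle and head information.
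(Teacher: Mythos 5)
Your overall plan follows the paper's: you discover the split $M_3 = M_1\oplus U$ via $\eta_{3,1}\eta_{1,3}$ when $n\equiv 0\pmod 4$, identify $U=\Ker\eta_{3,1}$ and use self-duality; and in (ii) you push $M_2'$ through $\eta_{2,3}$ to get the uniserial piece. In (ii) your route to uniseriality of $\image\eta_{2,3}$ --- counting $\dim\Ker(\eta_{2,3}|_{M_2'})=n-2$ and using that $M_2'$ has a linearly ordered submodule lattice --- is correct and arguably slicker than the paper's, which first proves $\Ker\eta_{2,3}=\image\eta_{1,2}$ from $\eta_{2,3}\circ\eta_{1,2}=0$. (Your auxiliary element $X$ is correct but unnecessary: the identity $\image\eta_{2,3}=\eta_{2,3}(M_2')$ is automatic from $\eta_{2,3}(T_2)=(n-2)T_3=0$ when $n\equiv 2\pmod 4$; also, the logical jump ``Hence $T_3\in\eta_{2,3}(M_2')$, and therefore $\image\eta_{2,3}=\eta_{2,3}(M_2')$'' does not follow as written.)

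There are, however, two genuine gaps where you delegate to a ``standard'' or ``direct'' check exactly the content the lemma actually requires. In (i), the assertion that $\soc U=D_2$ is attributed to ``the standard description of the socle of the two-row Specht module $S^{(n-3,3)}$ in characteristic~2''; this is the heart of the matter, not a citable black box. The paper proves it by showing $D_1\not\hookrightarrow\Specht_3$ via $\Hom_{\F\SSS_n}(D_1,M_3)\cong D_1^{\SSS_{n-3,3}}=0$ (which \emph{is} a genuinely direct check, $D_1$ being essentially the natural module), and then $\head(\Specht_3)\cong D_3$ forces $\Specht_3$ uniserial with layers $D_2,D_1,D_3$. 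You would need to either supply such an argument or pin down a precise reference. Relatedly, ``iterating the same arguments on $U/\soc U$ (with composition factors $D_1,D_3,D_1$)'' is off: $U/\soc U$ has four composition factors $D_1,D_1,D_2,D_3$; what you mean is $\rad(U)/\soc(U)$, and even then self-duality of that subquotient does not by itself exclude a non-simple socle $D_1\oplus D_1$.

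In (ii), the claim that ``no nontrivial simple $D^\mu$ appearing in $M_3$ has a nonzero $\SSS_{n-3,3}$-fixed vector'' can be verified by a ``direct check'' is only true for $D_1$. For $D_2$ and $D_3$, computing $(D^\mu)^{\SSS_{n-3,3}}$ from scratch needs exactly the information you are trying to establish: by Frobenius reciprocity and self-duality of $M_3$, the vanishing of $(D^\mu)^{\SSS_{n-3,3}}$ is equivalent to $D^\mu$ not appearing in $\soc(M_3)$. The paper instead argues structurally: $D_3$ appears only once in $M_3$, as $\head(\Specht_3)$ with $\Specht_3$ a non-simple submodule, so cannot embed; and for $D_2$, one copy sits above $\soc(\image\eta_{2,3})=\triv$ so cannot lie in $\soc(M_3)$, while the other copy lives in $M_3/\image\eta_{2,3}$, which surjects onto $\Specht_3^*$ with simple socle $D_3$. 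You would need to supply substitutes for these two arguments to close the gap.
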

\begin{proof}
The composition factors are given by \cite{James}. The dimensions of
the images of the incidence maps $\eta_{r,s}$ are given by Lemma~\ref{wilson}.

\smallskip
(i) A simple computation shows that $\eta_{3,1}\circ\eta_{1,3}=1_{M_1}$,
so $\eta_{1,3}$ is a split injection of $M_1$ into $M_3$. 
In fact, by computing bilinear forms on the images of basis elements,
$\eta_{1,3}$ is seen to be an isometry. So
$M_3=\image\eta_{1,3}\oplus U$, with $U=(\image\eta_{1,3})^\perp=\Ker\eta_{3,1}$,
where the last equality is by (\ref{adjoint}).
  The module $U$ is a self-dual module and its  composition factors are
$D_1$ (twice), $D_2$ (twice) and $D_3$, and $U$ contains $\Specht_3$.
The structure of $U$ will follow from its self-duality if we prove
that $\Specht_3$ is uniserial with socle layers $D_2$, $D_1$, $D_3$. 
Since we know that the head of $\Specht_3$ is isomorphic to $D_3$, it suffices
to show that $D_1$ is not a submodule of $\Specht_3$. If it were, then we would have $\Hom_{\F\SSS_n}(D_1,M_3)\neq 0$, whence $\Hom_{\F\SSS_n}(M_3,D_1)\neq 0$, i.e. the fixed point subspace  $D_1^{\SSS_{n-3,3}}$ is non-trivial, which is easily checked to be false. 

\smallskip
(ii) We have $\image\eta_{1,2}\cong Q$. Also, it is easy to see that $\eta_{2,3}\circ\eta_{1,2}=0$, whence by dimensions using Lemma~\ref{wilson}, we conclude that $\image\eta_{1,2}=\Ker\eta_{2,3}$. Moreover, $\image\eta_{1,2}\cong Q$ by Lemma~\ref{M2}. 
Since $\image\eta_{1,2}\nsubseteq M_2'$, the structure of $Q$ implies that
$\Ker\eta_{2,3}\cap M_2'$ must be zero or isomorphic to $D_1$. Since we
know the rank of $\eta_{2,3}$, we see that the latter holds.
Thus from the structure of $M_2'$, we see that
$\eta_{2,3}(M_2')\cong M_2'/\soc(M_2')$. By dimensions,
we see that $\eta_{2,3}(M_2')=\image\eta_{2,3}$, so $\image\eta_{2,3}$
is uniserial with the socle layers as stated.

It remains to show that $\soc(M_3)=\F\cdot T_3$. For this, it suffices to
prove that $M_3$ has no submodule isomorphic to $D_1$, $D_2$ or $D_3$. 
For $D_1$ we explicitly check as in (i) that $D_1^{\SSS_{n-3,3}}=0$. 
The unique $D_3$ composition factor
of $M_3$ is the head of $\Specht_3$, so since $\Specht_3$ is not simple, it follows that
$M_3$ has no submodule isomorphic to $D_3$. 
Finally, we consider $D_2$. 
By the first part of (ii), $\image\eta_{2,3}$ has one composition factor $D_2$ 
as its second socle layer, so it suffices to show that $M_3/\image\eta_{2,3}$, which has a single $D_2$ composition factor, has no submodule isomorphic to $D_2$. However,   $M_3/\image\eta_{2,3}$ has $D_3$ as a composition factor, so maps surjectively onto $\Specht_3^*$.
By \cite{James},  $D_2$ is a composition factor of $\Specht_3^*$, but
$\Specht_3^*$ has a simple socle isomorphic to $D_3$. So $D_2$ is not 
a submodule of $M_3$. 
\end{proof}

Figures~\ref{ncong0} and \ref{ncong2} below are given for the reader's convenience, but they will not be used in proofs. The pictures give {\em partial}\, information on submodule structure of the permutation modules $M_2$ and $M_3$. The edges indicate the existence of uniserial subquotients. 

\begin{figure}[h]
$M_2:
\xymatrix@C=8pt@R=8pt{
&D_1\ar@{-}[dl]\ar@{-}[d]&\\
\triv&D_2\ar@{-}[d]&\triv\ar@{-}[dl]\\
&D_1&}
\qquad\qquad
M_3:
\xymatrix@C=8pt@R=8pt{
&&D_2\ar@{-}[d]\\
\triv\ar@{-}[d]&&D_1\ar@{-}[d]\\
D_1\ar@{-}[d]&\oplus&D_3\ar@{-}[d]\\
\triv&&D_1\ar@{-}[d]\\
&&D_2}$

\caption{Submodule structures for $n\equiv0\pmod4$\label{ncong0}}
\end{figure}

\bigskip

\begin{figure}[h]
$M_2:
\xymatrix@C=8pt@R=8pt{
&D_1\ar@{-}[d]\\
&\triv\ar@{-}[d]\\
\triv\ \oplus&D_2\ar@{-}[d]\\
&\triv\ar@{-}[d]\\
&D_1}
\qquad\qquad
M_3:
\xymatrix@R=8pt@C=8pt{
&&\triv\ar@{-}[dl]\ar@{-}[dr]&&\\
&D_1\ar@{-}[dl]&&D_2\ar@{-}[dl]\ar@{-}[dr]&\\
\triv\ar@{-}[dr]&&D_3\ar@{-}[dl]&&\triv\ar@{-}[dl]\\
&D_2\ar@{-}[dr]&&D_1\ar@{-}[dl]&\\
&&\triv&&}$
\caption{Submodule structures for $n\equiv2\pmod4$ \label{ncong2}}
\end{figure}

\begin{lemma}\label{Nker} 
We have:
\begin{enumerate}
\item[{\rm (i)}] $\image\eta_{1,3}$ is the unique submodule of
$M_3$ that is isomorphic to $M_1$ as $\F \SSS_n$-modules.
\item[{\rm (ii)}] $\Ker \eta_{3,1}$ is the unique submodule $N$ of 
$M_3$ such that $M_3/N\cong M_1$ as $\F \SSS_n$-modules.
\end{enumerate}
\end{lemma}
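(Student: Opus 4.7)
The plan is to reduce (ii) to (i) by duality, then prove (i) by analyzing the two-dimensional space $\Hom_{\F\SSS_n}(M_1, M_3)$.

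For the duality reduction, both $M_1$ and $M_3$ are self-dual $\F\SSS_n$-modules via their standard bases, and the adjointness (\ref{adjoint}) identifies $\eta_{3,1}$ as the dual of $\eta_{1,3}$, so that $(\image\eta_{1,3})^\perp = \Ker\eta_{3,1}$. If $N\subset M_3$ satisfies $M_3/N\cong M_1$, then $N^\perp \cong (M_3/N)^* \cong M_1^* \cong M_1$; granting (i), this forces $N^\perp = \image\eta_{1,3}$, and hence $N = \Ker\eta_{3,1}$. Existence is automatic: since $\eta_{1,3}$ is injective by Lemma~\ref{wilson}, $\eta_{3,1}$ is surjective, so $M_3/\Ker\eta_{3,1}\cong\image\eta_{3,1}\cong M_1$.

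For (i), Frobenius reciprocity gives both $\dim\End_{\F\SSS_n}(M_1)=\dim(M_1)^{\SSS_{n-1}}=2$ and $\dim\Hom_{\F\SSS_n}(M_1,M_3)=\dim(M_3)^{\SSS_{n-1}}=2$, the latter because $\SSS_{n-1}$ has two orbits on $\Omega_3$ (3-subsets containing versus missing the fixed point). Let $e\in\End_{\F\SSS_n}(M_1)$ be the endomorphism $\{i\}\mapsto T_1$; the key observation is $e^2(\{i\})=nT_1=0$, since $n$ is even and $\Char\F=2$, so $\End_{\F\SSS_n}(M_1)=\F[e]/(e^2)$ by dimension count. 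The maps $\eta_{1,3}$ and $\kappa:=\eta_{1,3}\circ e$ (which sends every $\{i\}$ to $T_3$) are linearly independent, hence span $\Hom_{\F\SSS_n}(M_1,M_3)$, so every $\varphi$ in this Hom-space has the form $\varphi=\eta_{1,3}\circ(a\,\id+b\,e)$ for some $a,b\in\F$. Since $\eta_{1,3}$ is injective, $\varphi$ is injective iff $a\,\id+b\,e$ is, which from $e^2=0$ happens precisely when $a\neq 0$; in that case $a\,\id+b\,e$ is invertible (with inverse $a^{-1}\id+a^{-2}b\,e$), and therefore $\image\varphi=\image\eta_{1,3}$. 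Finally, any submodule $V\subseteq M_3$ with $V\cong M_1$ arises as the image of an injective $\varphi$ (compose an isomorphism $M_1\xrightarrow{\sim}V$ with the inclusion $V\hookrightarrow M_3$), so $V=\image\eta_{1,3}$, proving (i).

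I expect no serious obstacle. The Hom-space argument treats both residue classes of $n$ modulo $4$ uniformly, using only the injectivity of $\eta_{1,3}$ and the nilpotency $e^2=0$ (which together with $\Char\F=2$ is exactly where the hypothesis $2\mid n$ is used). A more concrete approach via Lemma~\ref{M3} would handle the case $n\equiv 0\pmod 4$ cleanly (project along $M_3=\image\eta_{1,3}\oplus U$ and observe that no nonzero quotient of $M_1$ embeds in $U$, since $U$ has no trivial composition factor), but the case $n\equiv 2\pmod 4$ would be more delicate there, because the trivial composition factors of $M_3$ lie inside its socle layers rather than splitting off as a direct summand.
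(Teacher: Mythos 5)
Your proof is correct and takes essentially the same approach as the paper's: reduce (ii) to (i) by the duality of $\eta_{1,3}$ and $\eta_{3,1}$, then compute that $\dim\Hom_{\F\SSS_n}(M_1,M_3)=2$ with basis $\eta_{1,3}$ and the constant map to $T_3$, and observe that the latter has image inside $\image\eta_{1,3}$. The only cosmetic difference is that you make the containment $\image\beta\subseteq\image\eta_{1,3}$ explicit via the factorization $\beta=\eta_{1,3}\circ e$ with $e^2=0$ (which uses $2\mid n$), and conclude via invertibility of $a\,\id+b\,e$ for $a\neq 0$; the paper reaches the same conclusion slightly more tersely by dimension count after noting the containment of images.
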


\begin{proof} Part (ii) follows from (i) by the duality of $\eta_{1,3}$ and $\eta_{3,1}$. For (i), note that 
$\dim\Hom_{\F \SSS_n}(M_1, M_3)=2$. The map $\eta_{1,3}$
and  the map $\beta$ sending each $1$-set to $T_3$ 
form a basis of this $\Hom$-space.  
Now $\eta_{1,3}$ is injective since $\soc(M_1)$ is spanned by $T_1$
and $\eta_{1,3}(T_1)=3T_3=T_3\neq 0$. Also we have 
$\image \beta\subset\image\eta_{1,3}$,
so $\image\eta_{1,3}$ is the unique submodule of $M_3$ isomorphic to $M_1$.
\end{proof}

In the following two lemmas, $N$ denotes the submodule of $M_3$ specified in Lemma~\ref{Nker}(ii).

\begin{lemma} \label{L0Mod4}
Let $n\equiv 0\pmod 4$. Then $\Ker \eta_{3,2}\cap N=\soc^3(N)$, and $$N/\soc^3(N)\cong \eta_{3,2}(N)= \image\eta_{3,2}\cap M'_2=\Specht_2.$$  
\end{lemma}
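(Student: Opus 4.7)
The plan is to prove $\eta_{3,2}(N)=\Specht_2$; once this is in hand, the other equalities and isomorphisms in the lemma follow by comparing dimensions. Because $N$ is uniserial of length $5$ with socle layers $D_2, D_1, D_3, D_1, D_2$ (from the proof of Lemma~\ref{M3}(i)), the intersection $\Ker\eta_{3,2}\cap N$ must equal $\soc^k(N)$ for some $k\in\{0,1,\ldots,5\}$, so the task reduces to pinning down $k=3$.

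First I would identify $\Specht_2$ inside $\image\eta_{3,2}$ by proving the last assertion $\image\eta_{3,2}\cap M'_2=\Specht_2$ separately. The direct $\F_2$-calculations $\eta_{2,1}\circ\eta_{3,2}=0$ (each of $i,j,k$ appears twice in $\eta_{2,1}\eta_{3,2}(\{i,j,k\})$) and $\eta_{2,0}\circ\eta_{3,2}(\{i,j,k\})=3=1$, combined with $\Specht_2=\Ker\eta_{2,0}\cap\Ker\eta_{2,1}$ from (\ref{specht_module}), show that $\image\eta_{3,2}\cap M'_2\subseteq\Specht_2$ and that $\image\eta_{3,2}/(\image\eta_{3,2}\cap M'_2)\hookrightarrow M_2/M'_2\cong\F$ is non-zero, hence one-dimensional. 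Together with $\dim\image\eta_{3,2}=1+\dim\Specht_2$ from Lemma~\ref{wilson}, this forces equality.

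Next I would show $\eta_{3,2}(N)\subseteq\Specht_2$ and $k\geq 3$. The identity $\eta_{2,0}\circ\eta_{3,2}=\eta_{1,0}\circ\eta_{3,1}$ (both sides send any $3$-set to $3=1$) combined with $\eta_{3,1}|_N=0$ gives $\eta_{3,2}(N)\subseteq M'_2\cap\image\eta_{3,2}=\Specht_2$. By (\ref{specht_module}), $\Specht_3\subseteq\Ker\eta_{3,1}\cap\Ker\eta_{3,2}=N\cap\Ker\eta_{3,2}$; on the other hand the proof of Lemma~\ref{M3}(i) exhibits $\Specht_3$ as uniserial with socle layers $D_2, D_1, D_3$, so by uniseriality of $N$ it must coincide with $\soc^3(N)$, giving $k\geq 3$.

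It then remains to rule out $k=4$ and $k=5$, which I expect to be the main point. The case $k=5$ is straightforward: it would force $\image\eta_{3,2}=\eta_{3,2}(\image\eta_{1,3})$, of dimension at most $\dim M_1=n$, contradicting $\dim\image\eta_{3,2}=1+n(n-3)/2$. The case $k=4$ is the principal obstacle: it would force $\eta_{3,2}(N)\cong N/\soc^4(N)\cong D_2$ to embed as a simple submodule of $\Specht_2$, so I need to know that $\Specht_2$ contains no copy of $D_2$. I would unwind Lemma~\ref{M2}(i): the quotient $\Specht_2^*\cong M_2/(\image\eta_{1,2}+\F\cdot T_2)$ has socle $D_2$, and subtracting the composition factors of $\image\eta_{1,2}\cong Q$ and of $\F\cdot T_2\cong\triv$ from those of $M_2$ leaves composition factors $D_1$ and $D_2$ only; hence $\Specht_2^*$ is uniserial with socle $D_2$ and head $D_1$, so dually $\Specht_2$ is uniserial with socle $D_1$ and head $D_2$, which has no $D_2$ submodule. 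Hence $k=3$, and since $\eta_{3,2}(N)\cong N/\soc^3(N)$ has composition factors $D_1, D_2$ of total dimension $\dim\Specht_2$, the inclusion $\eta_{3,2}(N)\hookrightarrow\Specht_2$ is an equality.
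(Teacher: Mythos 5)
Your proof is correct, and it takes a route that differs from the paper's in three noticeable ways. First, to see that $\soc^3(N)\subseteq\Ker\eta_{3,2}$, you identify $\soc^3(N)$ with $\Specht_3$ and invoke the kernel characterization (\ref{specht_module}) of $\Specht_3$ directly, whereas the paper argues that $M_2$ has no composition factor $D_3$ and hence $\eta_{3,2}$ must kill all of $\soc^3(N)$. Second, you establish $\image\eta_{3,2}\cap M'_2=\Specht_2$ \emph{up front} by computing $\eta_{2,1}\circ\eta_{3,2}=0$ and $\eta_{2,0}\circ\eta_{3,2}=\eta_{3,0}$ and then counting dimensions via Lemma~\ref{wilson}; the paper instead reaches this identification only at the very end, by matching composition factors. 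Third, to exclude the case $k=4$ you show $\eta_{3,2}(N)\subseteq\Specht_2$ and then use only the uniseriality of $\Specht_2$ (socle $D_1$, no $D_2$ submodule), whereas the paper relies on the stronger unreferenced fact that all of $M_2$ has no $D_2$ submodule. Your version buys a more self-contained elimination of $k=4$ (the needed fact about $\Specht_2$ drops cleanly out of Lemma~\ref{M2}(i)), and the explicit incidence-map computations make the equality $\image\eta_{3,2}\cap M'_2=\Specht_2$ transparent rather than a by-product of the dimension count. The paper's argument is a bit shorter because it defers the identification of $\Specht_2$ and handles the injectivity of $N/\soc^3(N)\to M_2$ in one stroke; both are sound.
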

\begin{proof}
By Lemma~\ref{M3}, $N/\soc^3(N)$ is uniserial with socle $D_1$ and head $D_2$.
Since $M_2$ has no composition factor isomorphic to $D_3$, 
we have $\soc^3 (N)\subseteq\Ker\eta_{3,2}$.

We claim that the induced map $N/\soc^3(N)\to M_2$ is injective.
If not, its image is either zero or isomorphic to $D_2$. The latter is impossible
since $M_2$ has no submodule isomorphic to $D_2$. The former is also
impossible since it forces the rank of $\eta_{3,2}$ to be at most $\dim M_1 = n$,
contrary to Lemma \ref{wilson}, which gives the actual rank as $1+\dim \Specht_2$.

Thus the map $\eta_{3,2}$ induces an isomorphism of $N/\soc^3(N)$
with a submodule $\eta_{3,2}(N)\subseteq M_2$. 
Since $N\subseteq M_3'$ and 
$\eta_{2,0}\circ\eta_{3,2}=\eta_{3,0}$, we have $\eta_{3,2}(N)\subseteq M_2'$.
Comparing the dimensions, we see that $\eta_{3,2}(N)=\image\eta_{3,2}\cap M_2'$. 
This submodule of $M_2$ has the same dimension and the same composition factors 
as $\Specht_2$. 
Since any submodule of $M_2$ with $D_2$ as a composition  factor must contain $\Specht_2$, 
we now conclude that $\eta_{3,2}(N)=\Specht_2$.  
\end{proof}

\begin{lemma} \label{L2Mod4} 
Let $n\equiv 2\pmod 4$, $W=\image\eta_{2,3}\subseteq M_3$, and $Y := M'_2\subset M_2$. Then: 
\begin{enumerate}
\item[{\rm (i)}] $\soc^2(W)\subseteq N$ and $\soc^2(W)$ is uniserial  with socle layers $\triv, D_2$. 
\item[{\rm (ii)}] $\Ker(\eta_{2,3}|_Y) = \soc(Y)$, and $Y/\soc(Y)\cong \eta_{2,3}(Y) = W$.
\item[{\rm (iii)}] $N \cap W = N \cap \soc^3(W) = \soc^2(W),~~M_3/(N + \soc^3(W)) \cong Q.$
\item[{\rm (iv)}] We have $M_1\cong \image\eta_{1,3}\subseteq N$, $\image\eta_{1,3}\cap W=\F\cdot T_3$, and the submodule 
$$Q':=(\image\eta_{1,3}+\soc^2(W))/\soc^2(W)\subseteq N':=N/\soc^2(W)$$
is isomorphic to $Q$. 
Moreover,  $N'' :=N'/Q'$ is uniserial with socle layers $D_3,D_2$. 
\item[{\rm (v)}] We have $\soc^2(W)\subset \Specht_3\subset N$, the submodule  $D':=\Specht_3/\soc^2(W)\subset N'$ is isomorphic to $D_3$. Moreover,  $\soc(N') = D' \oplus \soc(Q') \cong D_3 \oplus D_1$ and $  N'/D' \cong N/\Specht_3 \cong \eta_{3,2}(N) \cong \Specht_2.$
\end{enumerate}

\end{lemma}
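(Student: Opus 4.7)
The plan is to prove parts (i)--(v) sequentially, using the structures of $M_1$, $M_2$, $M_3$ from Lemmas~\ref{M1}--\ref{M3}, the rank formulas in Lemma~\ref{wilson}, and direct computations with the incidence maps $\eta_{r,s}$. The arithmetic everywhere boils down to $\binom{n-1}{2}$ and $n-2$ being even when $n\equiv 2\pmod 4$. I would begin with (ii): the proof of Lemma~\ref{M3}(ii) already established $\Ker\eta_{2,3}=\image\eta_{1,2}\cong Q$ and $\Ker\eta_{2,3}\cap Y\cong D_1$; since $Y$ is uniserial with socle $D_1$ by Lemma~\ref{M2}(ii), this intersection must equal $\soc(Y)$, and a dimension count via Lemma~\ref{wilson} then yields $\eta_{2,3}(Y)=W$. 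Part (i) follows at once: the socle filtration of $\soc^2(W)$ is read off from Lemma~\ref{M3}(ii), and $\soc^2(W)\subseteq N$ because $\eta_{3,1}(T_3)=\binom{n-1}{2}T_1=0$ and $M_1$ has no $D_2$ composition factor, forcing the induced map $\soc^2(W)/\F T_3\cong D_2\to M_1$ to vanish.

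For (iii), direct calculation gives $\eta_{3,1}\circ\eta_{2,3}(\{a,b\})=T_1+\{a\}+\{b\}$ (using that $n-2$ is even), whose image is $M'_1$ of dimension $n-1$. Hence $\dim(W\cap N)=\dim\soc^2(W)$, and combined with (i) this gives $W\cap N=\soc^2(W)=N\cap\soc^3(W)$; then $(N+\soc^3(W))/N\cong\soc^3(W)/\soc^2(W)\cong\triv$ lands in $\soc(M_3/N)\cong\F T_1$, yielding $M_3/(N+\soc^3(W))\cong M_1/\F T_1\cong Q$. For (iv), compute $\eta_{3,1}\circ\eta_{1,3}(\{a\})=\binom{n-1}{2}\{a\}+(n-2)\sum_{b\neq a}\{b\}=0$ in characteristic $2$, so $\image\eta_{1,3}\subseteq N$; injectivity of $\eta_{1,3}$ gives $\image\eta_{1,3}\cong M_1$. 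Both $\image\eta_{1,3}$ and $W$ are uniserial with socle $\F T_3$, and their proper nonzero submodule lattices have incompatible composition-factor multisets (the length-$2$ submodule of $M_1$ has factors $\{\triv,D_1\}$ while that of $W$ has $\{\triv,D_2\}$, and similarly for length $3$), ruling out any common submodule of length $\geq 2$ and giving $\image\eta_{1,3}\cap W=\F T_3$. Hence $Q'\cong\image\eta_{1,3}/\F T_3\cong Q$ by the second isomorphism theorem.

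For (v), the inclusion $\Specht_3\subseteq N$ is immediate from (\ref{specht_module}). To get $\soc^2(W)\subseteq\Specht_3$, one verifies $\eta_{3,t}(\soc^2(W))=0$ for $t=0,1,2$: the cases $t=0,1$ are handled by (i) together with the absence of a $\triv$ quotient of $\soc^2(W)$, and for $t=2$ the calculation $\eta_{3,2}\circ\eta_{2,3}(\{a,b\})=T_2^{(a)}+T_2^{(b)}\in\image\eta_{1,2}$ shows that $\eta_{3,2}(W)\cong D_1$ has kernel $\soc^3(W)\supseteq\soc^2(W)$. Composition-factor counting in $\Specht_3$ then gives $D'\cong D_3$. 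For the socle decomposition $\soc(N')=D'\oplus\soc(Q')$, I would show $\soc^2(N)=\soc^2(W)+\eta_{1,3}(M'_1)$, with composition factors $\{\triv,D_2,D_1\}$, which together with $\soc(N)=\F T_3$ gives $\soc(N/\soc(N))=D_2\oplus D_1$; factoring out $\soc^2(W)$ strips the $D_2$ and exhibits $\soc(N')=D'\oplus D_1$, whence also the uniseriality of $N''$ in (iv). Finally, $N/\Specht_3\cong\eta_{3,2}(N)\cong\Specht_2$ follows from $\Ker(\eta_{3,2}|_N)=\Specht_3$ (by composition-factor matching) together with Lemma~\ref{M2}(ii).

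The main obstacle is the detailed socle/head analysis in (iv) and (v): verifying the lattice incompatibility needed for $\image\eta_{1,3}\cap W=\F T_3$, and, more seriously, ruling out unwanted simple summands in $\soc(N')$. Both rely on tight composition-factor bookkeeping across the interlocking submodules $\soc^2(W)$, $\eta_{1,3}(M'_1)$, $\Specht_3$, and $\image\eta_{1,3}$ inside $N$, and on the self-duality of the irreducible $\F\SSS_n$-modules.
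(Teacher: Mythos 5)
Your treatment of (i), (ii), (iii), and the first halves of (iv) and (v) is correct and, in places, more computational than the paper's: you verify the identities $\eta_{3,1}\circ\eta_{2,3}(\{a,b\}) = T_1+\{a\}+\{b\}$, $\eta_{3,1}\circ\eta_{1,3}=0$, and $\eta_{3,2}\circ\eta_{2,3}(\{a,b\})=\eta_{1,2}(\{a\}+\{b\})$ directly (using $2\mid\binom{n-1}{2}$ and $2\mid n-2$), where the paper instead argues structurally from Lemmas~\ref{M2},~\ref{M3} and dimension/composition-factor counts via Lemma~\ref{wilson}. In particular your uniseriality-comparison argument for $\image\eta_{1,3}\cap W=\F T_3$ (the second socle layers $D_1$ vs.\ $D_2$ are incompatible) is a clean substitute for the paper's ``using the submodule structure of $W$ and $\image\eta_{1,3}$''.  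Your route to $\soc^2(W)\subset\Specht_3$ via $\eta_{3,t}(\soc^2(W))=0$ for $t=0,1,2$ is also valid and differs mildly from the paper, which derives this at the end of (v) by comparing $\Ker(\pi|_{\Specht_3})$ with $\Ker\pi=\soc^2(W)$.

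The genuine gap is in the claim that drives both the uniseriality of $N''$ and the socle decomposition $\soc(N')=D'\oplus\soc(Q')$. You assert (``I would show'') that $\soc^2(N)=\soc^2(W)+\eta_{1,3}(M'_1)$, with composition factors $\triv,D_2,D_1$, and deduce everything from this. But this equality is exactly the hard point: one must rule out that $\soc(N/\F\cdot T_3)$ acquires an additional simple summand isomorphic to $\triv$, $D_3$, or a second $D_2$ (all of which occur as composition factors of $N/\F\cdot T_3$). Your proposal contains no argument against this, and your later sentence ``whence also the uniseriality of $N''$'' is logically dependent on this missing step; as written, the derivation risks circularity, since bounding $\soc(N')$ from above is just as hard as proving $\soc(N'')\cong D_3$. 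The paper resolves this by a quite different device: it observes that $\Specht_3^*$ is a quotient of $M_3$ whose simple socle is $D_3$, that $D_3$ is not a composition factor of $\soc^2(W)+\image\eta_{1,3}$, and hence that a submodule $S'$ of $\Specht_3^*$ is a quotient of $N''$; a composition-factor count then forces $N''\cong S'$ with $\soc(N'')\cong D_3$. Only afterwards does it deduce $\soc(N')\hookrightarrow D_3\oplus D_1$ from the exact sequence $0\to Q'\to N'\to N''\to 0$ and the known socles of $Q'$ and $N''$. You should either import that $\Specht_3^*$ argument, or supply a direct proof that $\soc^2(N)=\soc^2(W)+\eta_{1,3}(M'_1)$ (for instance: a 2-dimensional uniserial $\triv,\triv$ or a $\triv,D_3$ piece over $\F T_3$ would force either a second $D_3$ in $M_3$ or a contradiction with $\soc^2(\Specht_3)=\soc^2(W)$; and a second $D_2$ in $\soc(N/\F T_3)$ must be excluded as well). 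As stated, the proposal does not complete (iv) or the socle claim in (v).
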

\begin{proof}
(i) The structure of $W$ is given in Lemma~\ref{M3}, which implies that $\soc^2(W)$ is uniserial  with socle layers $\triv, D_2$. 
Any nonzero quotient of $\soc^2(W)$ has $D_2$ as its head. 
But $M_3/N\cong M_1$ and $D_2$ is not a composition factor of $M_1$. So 
we see that $\soc^2(W)\subseteq N$.

(ii) Recall that $M_2 \cong Y \oplus \triv$ and $Y$ is uniserial with socle layers $D_1,\triv,D_2,\triv, D_1$  by Lemma 
\ref{M2}. Next, $\eta_{2,3}(Y)$ has codimension $\leq 1$ in $W = \image\eta_{2,3}$. 
Inspecting the submodule structures of $W$ and $Y$ given in Lemma \ref{M3}(ii)
and Lemma \ref{M2}, we see that $\eta_{2,3}(Y) = W$ and  
$\Ker(\eta_{2,3}|_Y) = \soc(Y)$. 

(iii) Note that $\eta_{3,1}\circ\eta_{2,3}\neq 0$, so 
$W \not\subseteq \Ker\eta_{3,1} = N$. Moreover, $\soc^3(W) \not\subseteq N$, since 
otherwise $W \cap N = \soc^3(W)$, and $M_3/N \cong M_1$ contains 
$(W+N)/N \cong W/\soc^3(W) \cong D_1$
as a submodule, which is a contradiction. As $W$ is uniserial and 
$\soc^2(W) \subseteq N$, it now follows that
$N \cap W = N \cap \soc^3(W) = \soc^2(W).$ 
Now, $M_3/(N+\soc^3(W))$ is a quotient of $M_3/N \cong M_1$ by 
$$(N+\soc^3(W))/N \cong \soc^3(W)/(N \cap \soc^3(W)) = \soc^3(W)/\soc^2(W) \cong
  \triv,
  $$
so this quotient must be isomorphic to $Q$.

(iv) We know that $N'$ has composition factors $D_3$, $D_2$, $D_1$, and $\triv$. It is easy to check that 
$\eta_{3,1}\circ\eta_{1,3}=0$, so $\image\eta_{1,3}\subseteq N = \Ker \eta_{3,1}$. By Lemma~\ref{Nker}(i), $\image\eta_{1,3}\cong M_1$. Using the submodule structure of $W$ and 
$\image\eta_{1,3}$ 
and the fact that $\soc(M_3)=\F\cdot T_3$, 
we conclude that $\image\eta_{1,3}\cap W=\F\cdot T_3$. 
Therefore the image $Q'$ of $\image\eta_{1,3}$ in
$N'$ is isomorphic to $Q$. 

Now we know that $N''$ has composition factors $D_3$ and $D_2$. 
Note  that $\Specht_3^*$ is a quotient of $M_3$, so some submodule $S'$ of 
$\Specht_3^*$ is a quotient of $N$. Also, $D_3$ is the head of $\Specht_3$ and the socle of 
$\Specht_3^*$. But $D_3$ is not a composition factor of $\soc^2(W)+\image\eta_{1,3}$. It 
follows that $\Specht_3^*$ is a quotient of $M_3/(\soc^2(W)+\image\eta_{1,3})$, 
whence $S'$ is a quotient of $N/(\soc^2(W)+\image\eta_{1,3})=N''$ and also of
$N'$. By \cite{James} the composition factors of $\Specht_3$ (and 
$\Specht_3^*$) are $D_3$, $D_2$, $\triv$. Among these, only $\triv$ is a composition factor of 
$M_1 \cong M_3/N$, so $S'$ has both $D_3$ and $D_2$ as composition factors. 
We have shown that $S'$ is a quotient of $N''$ which has exactly two composition
factors $D_3$ and $D_2$. It follows that in fact $N''\cong S'$. In this case, 
$\soc(N'') \cong D_3$ since $\soc(\Specht_3^*)\cong D_3$ is simple. 

(v) Recall that $\Specht_3 \subset M_3$ and ${\mathrm {head}}(\Specht_3) \cong D_3$ is not
a composition factor of $M_1 \cong M_3/N$. It follows that  
$\Specht_3\subset N$ and, since $D_1$ is not a composition factor of $\Specht_3$,
the image $D'$ of $\Specht_3$ in $N'$ intersects $Q'$ trivially. The aforementioned 
structure of $N''$ implies that $\Specht_3$ has no quotient isomorphic to $N''$.
Therefore, under the natural projection $N' \to N''$, $D'$ projects onto 
a module isomorphic to $D_3$, or $0$. The latter cannot happen since $D_3$ is not a composition factor of  $\soc^2(W) + \image \eta_{1,3}$. So $D'$ projects onto a module
isomorphic to $D_3$. This implies that the composition factors of $D'+Q'$ are 
$D_3$, $D_1$ ,$\triv$. Since $D'\cap Q'=0$ and $Q'\cong Q$, it follows that $D'\cong D_3$.
We have shows that that $D_3\oplus D_1\cong D'\oplus \soc(Q')\subseteq\soc(N')$. 
On the other hand, $N'/Q'=N''$, $\soc(N'')\cong D_3$, and $\soc (Q')\cong D_1$. Together these imply that $\soc(N')$ embeds into $D_3\oplus D_1$. So $D'\oplus \soc(Q')=\soc(N')$.

Let $\pi$ denote the natural projection $N \to N'$. Then we have shown 
that $\Ker(\pi|_{\Specht_3})$ has two composition factors $D_2$ and $\triv$. On the other
hand, $\Ker\pi = \soc^2(W)$. It follows that $\Ker(\pi|_{\Specht_3}) = \soc^2(W)$
so that in fact $D'=\Specht_3/\soc^2(W)$ and $N'/D' \cong N/\Specht_3$.
By (\ref{specht_module}) we have $\Specht_3=N\cap\Ker\eta_{3,2}$. So 
$N/\Specht_3= N/(\Ker\eta_{3,2}\cap N) \cong \eta_{3,2}(N)$
is a submodule of $M_2$ with composition factors  $D_1$, $\triv$, $D_2$, which are 
precisely the composition factors of $\Specht_2$. Hence,
$N'/D' \cong N/\Specht_3 = \eta_{3,2}(N)=\Specht_2$.
\end{proof}

\section{Main reduction theorem}

The following theorem is the main tool for proving reducibility of various restrictions $\Res^{\SSS_n}_{X}$ in the key case $p=2|n$. Note by Theorem~\ref{hom} that the assumption $d_3(V)>d_1(V)$ is equivalent to the assumption that $V$ is not trivial and not basic spin module. 

\begin{theorem}\label{prereduction}
Let $p=2|n \geq 6$, $V$ be an irreducible $\F \SSS_n$-module satisfying $d_3(V)>d_1(V)$, and $X$ be a subgroup of $\SSS_n$. Let $N$ be the $\F \SSS_n$-submodule of $M_3$ specified in Lemma \ref{Nker}(ii).   
Suppose that for any nonzero $\F\SSS_n$-quotient $J$ of $N$ we have 
$J^X \neq 0$ and if $J^{\SSS_n} \neq 0$ then $\dim J^X \geq 2$.
Then the restriction $\Res^{\SSS_n}_X V$ is reducible.
\end{theorem}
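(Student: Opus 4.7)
The plan is to reduce the reducibility of $V|_X$ to a lower bound on $\dim\End_{\F X}(V)$ and then, via the short exact sequence $0\to N\to M_3\to M_1\to 0$, to convert the inequality $d_3(V)>d_1(V)$ into a nonzero $\F\SSS_n$-homomorphism $N\to\End_\F V$ whose image $J$ supplies the needed extra $X$-invariant endomorphism of $V$. Since $V$ is an irreducible $\F\SSS_n$-module, it is self-dual, and so is $V|_X$. For any self-dual $\F X$-module $U$, reducibility is equivalent to $\dim\End_{\F X}(U)\ge 2$: if $U$ is reducible then the composition $U\twoheadrightarrow\head(U)\cong\soc(U)\hookrightarrow U$ is a nonzero non-invertible endomorphism of $U$, independent of $1_U$. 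Thus it suffices to exhibit two linearly independent elements of $(\End_\F V)^X=\End_{\F X}(V)$.

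By Lemma~\ref{Nker}(ii), $N$ fits into the short exact sequence
$$0\longrightarrow N\longrightarrow M_3\stackrel{\eta_{3,1}}{\longrightarrow} M_1\longrightarrow 0,$$
surjectivity of $\eta_{3,1}$ following by duality from the injectivity of $\eta_{1,3}$ in Lemma~\ref{wilson}. Applying the contravariant left-exact functor $\Hom_{\F\SSS_n}(-,\End_\F V)$ yields
$$0\longrightarrow\Hom_{\F\SSS_n}(M_1,\End_\F V)\longrightarrow\Hom_{\F\SSS_n}(M_3,\End_\F V)\stackrel{\beta}{\longrightarrow}\Hom_{\F\SSS_n}(N,\End_\F V),$$
with $\dim\image(\beta)=d_3(V)-d_1(V)>0$ by hypothesis. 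Hence there exists $\phi\in\Hom_{\F\SSS_n}(M_3,\End_\F V)$ whose restriction $\psi:=\phi|_N$ is nonzero, and then $J:=\psi(N)\subseteq\End_\F V$ is a nonzero $\F\SSS_n$-quotient of $N$ to which the assumed conditions apply.

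Now $J^X\subseteq(\End_\F V)^X=\End_{\F X}(V)$, and the identity $1_V$ belongs to $(\End_\F V)^{\SSS_n}\subseteq\End_{\F X}(V)$. If $J^{\SSS_n}=0$, then $1_V\notin J$, so $J\cap\F\cdot 1_V=0$; the hypothesis provides a nonzero vector in $J^X$, which together with $1_V$ spans a two-dimensional subspace of $\End_{\F X}(V)$. If instead $J^{\SSS_n}\neq 0$, the hypothesis directly yields $\dim J^X\ge 2$, so $\dim\End_{\F X}(V)\ge 2$ without having to invoke $1_V$ at all. Either way, the self-duality reduction of the first paragraph delivers the reducibility of $V|_X$. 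The only delicate point, and thus the main obstacle such as it is, is precisely this case split: when $J$ contains $\SSS_n$-fixed vectors, $1_V$ may itself lie in $J$ and cannot serve as an independent witness, which is exactly the contingency that the stronger second clause of the hypothesis is tailored to handle.
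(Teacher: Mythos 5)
Your proof is correct and follows the same approach as the paper: apply $\Hom_{\F\SSS_n}(-,\End_\F V)$ to the short exact sequence $0\to N\to M_3\to M_1\to 0$, extract a nonzero quotient $J\subseteq\End_\F V$ of $N$, and case-split on whether $J$ meets the line $\F\cdot 1_V=(\End_\F V)^{\SSS_n}$. (Your digression on self-duality is unnecessary — the direction you actually use, $\dim\End_{\F X}(V)\geq 2\Rightarrow V|_X$ reducible, is just Schur's lemma — but this does not affect correctness.)
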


\begin{proof}
Set $E := \End_{\F}(V)$ so that $d_r(V)=\dim\Hom_{\F\SSS_n}(M_r,E)$. By Schur's Lemma, $\dim E^{\SSS_n} = \End_{\F\SSS_n}(V)=1$, so 
the $\F\SSS_n$-module $E$ contains a unique submodule $E_1 \cong \triv$. Note that $E^X=\End_{\F X}(\Res^{\SSS_n}_X V)$, so it suffices to prove that $\dim E^X\geq 2$.

By definition of $N$ in Lemma \ref{Nker}(ii), we have an exact sequence 
$$
0\to N\to M_3\to M_1\to 0.$$ Applying $\Hom_{\F\SSS_n}(-,E)$ to this sequence and using the assumption $d_3(V)>d_1(V)$, we conclude that there is some $f \in \Hom_{\F\SSS_n}(M_3,E)$ such 
that $J := f(N) \neq 0$. 

If  $J \cap E_1 = 0$,
then $E$ contains a submodule isomorphic to $J \oplus E_1$, whence 
$\dim E^X \geq 2$ as $J^X\neq 0$ by assumption. On the other hand, if $J \cap E_1 \neq 0$ then $J^{\SSS_n} \neq 0$. In this case
$\dim E^X \geq \dim J^X \geq 2$  
by assumption again.
\end{proof}

Our main goal now will be to obtain permutation group theoretic conditions on the subgroup $X$ which guarantee that the assumptions of Theorem~\ref{prereduction} hold. 

To bound dimensions of various fixed point subspaces, we will 
frequently use the following well-known estimates:

\begin{lemma}\label{inv-easy}
Let $X$ be a group and $U \supseteq V$ be $\F X$-modules. Then: 
\begin{enumerate}
\item[{\rm (i)}] $\dim (U/V)^X - \dim H^1(X,V) \leq \dim U^X - \dim V^X \leq \dim (U/V)^X.$
\item[{\rm (ii)}] If in addition $X$ acts trivially on $V$ and $\Hom(X,\F)=0$, then 
$$\dim U^X = \dim V + \dim (U/V)^X.$$
\end{enumerate}
\end{lemma}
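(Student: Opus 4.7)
The plan is to deduce both parts from the long exact sequence in group cohomology applied to the short exact sequence
\[
0 \to V \to U \to U/V \to 0
\]
of $\F X$-modules. This yields the exact sequence
\[
0 \to V^X \to U^X \to (U/V)^X \xrightarrow{\delta} H^1(X,V) \to H^1(X,U) \to \cdots
\]

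For part (i), I would read off the relation $\dim U^X - \dim V^X = \dim \image\bigl(U^X \to (U/V)^X\bigr)$ directly from exactness at $U^X$. The upper bound $\dim U^X - \dim V^X \leq \dim (U/V)^X$ is then immediate since the image sits inside $(U/V)^X$. For the lower bound, the image equals $\ker \delta$, which has codimension at most $\dim H^1(X,V)$ in $(U/V)^X$; rearranging gives $\dim(U/V)^X - \dim H^1(X,V) \leq \dim U^X - \dim V^X$. Both inequalities of (i) follow at once.

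For part (ii), the extra hypotheses force $H^1(X,V) = 0$. Indeed, since $X$ acts trivially on $V$, the module $V$ is isomorphic to a direct sum of $\dim V$ copies of the trivial module $\triv = \F$, so $H^1(X,V) \cong H^1(X,\F)^{\oplus \dim V}$. But $H^1(X,\F) = \Hom(X,\F) = 0$ by assumption. Combined with $V^X = V$ (again from the trivial action), the equalities in (i) collapse to $\dim U^X - \dim V = \dim(U/V)^X$, which is (ii).

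There is no real obstacle here: the result is essentially just exactness of the first four terms of the long exact sequence in group cohomology together with the observation that $H^1$ of a trivial module detects exactly $\Hom(X,\F)$. The only care needed is to make the dimension-counting argument bridging the image of the connecting map $\delta$ with its kernel, which is purely linear algebra.
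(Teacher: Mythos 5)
Your proof is correct and follows essentially the same route as the paper: both derive (i) from the first four terms of the long exact cohomology sequence for $0\to V\to U\to U/V\to 0$, and both obtain (ii) by noting that the hypotheses force $H^1(X,V)=0$ and $V^X=V$. Your write-up is somewhat more explicit (identifying $V\cong\F^{\oplus\dim V}$ and hence $H^1(X,V)\cong\Hom(X,\F)^{\oplus\dim V}$), but the content is the same.
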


\begin{proof}
(i) follows from the exact sequence $0\rightarrow V^X\rightarrow U^X\rightarrow (U/V)^X\rightarrow H^1(X,V)$. 

(ii) In this case $H^1(X,V) =\Hom(X,\F)= 0$, whence the statement follows from (i). 
\end{proof}

Let $X\leq \SSS_n$ be any subgroup and $1\leq r\leq n/2$. We set 
\begin{equation}\label{EFE}
f_r(X):=\dim (M_r)^X, \quad f_0(X) := 0,\quad e_r(X):=f_r(X)-f_{r-1}(X).
\end{equation}
Note that $f_r(X)$ is the number of $X$-orbits on $\Omega_r$. Recalling the module $Q$ defined in (\ref{ses1}), when $p=2|n$ we also put 
\begin{equation}\label{EH}
h(X): = \dim H^1(X,Q).
\end{equation}

For any partition $\lambda \vdash n$, 
let $\chi^{\lambda}$ denote the irreducible ordinary $\SSS_n$-character 
labeled by $\lambda$. It is well known that the $\SSS_n$-character
afforded by the permutation module $\C\Omega_r$ is 
$\sum^{r}_{s=0}\chi^{(n-s,s)}$. Denote $\alpha := \chi^{(n-1,1)}$, so that 
$\alpha+1_{\SSS_n}$ is the permutation character of $\SSS_n$ acting on 
$\Omega := \{1,2, \ldots ,n\}$. Applying \cite[Lemma 3.3]{GT2}, we see that
$$\sum^{r}_{s=0}\chi^{(n-s,s)} = \left\{ 
   \begin{array}{ll}S^2(\alpha) & \text{if $r = 2$},\\
                    S^3(\alpha) - \wedge^2(\alpha)-\alpha & \text{if $r = 3$},
   \end{array} \right.$$
where $S^k$ denotes the $k$th symmetric power and $\wedge^k$ denotes the $k$th 
exterior power.   
If we know the restriction $\alpha_X := \Res^{\SSS_n}_X \alpha$ explicitly, we can 
compute $f_2(X)$ and $f_3(X)$ by computing the scalar product of $X$-characters as follows:
\begin{equation}\label{fixed}
  f_2(X) = [S^2(\alpha_X),1_X]_X,~~f_3(X) = 
    [(S^3(\alpha_X) - \wedge^2(\alpha_X)-\alpha_X,1_X]_X.
\end{equation}

Next we record some obvious observations:


\begin{lemma}\label{e22}
Let $X \leq \SSS_n$ be a transitive subgroup. Then: 
\begin{enumerate}
\item[{\rm (i)}] $f_2(X) = 1$ if and only if $X$ is $2$-homogeneous.
\item[{\rm (ii)}] Suppose $|X|$ is even. Then $f_2(X) = 1$ if and only if $X$ is $2$-transitive.
\item[{\rm (iii)}] $f_2(X) \leq 2$ if $X$ is a rank $\leq 3$ subgroup of
$\SSS_n$.  
\end{enumerate}
\end{lemma}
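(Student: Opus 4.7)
My plan is to unpack everything in terms of orbit counts. Recall that $f_r(X) = \dim (M_r)^X$ equals the number of $X$-orbits on $\Omega_r$, so for (i) there is nothing to do: by the very definition of $2$-homogeneity (transitivity on unordered pairs), $f_2(X) = 1$ iff $X$ acts transitively on $\Omega_2$, iff $X$ is $2$-homogeneous. This is just bookkeeping.

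For (ii), the forward direction is trivial since $2$-transitivity implies $2$-homogeneity. For the converse, assume $X$ is $2$-homogeneous of even order and I want to upgrade this to $2$-transitivity. The key observation is that any involution $\sigma \in X$ must move at least two points, so $\sigma$ has a $2$-cycle, say $(a\,b)$. Given any pair $\{i,j\} \in \Omega_2$, $2$-homogeneity yields $g \in X$ with $\{g(a),g(b)\} = \{i,j\}$, so the conjugate $g\sigma g^{-1} \in X$ interchanges $i$ and $j$. This means the $X$-orbit on the ordered pair $(i,j)$ coincides with the orbit on $(j,i)$, so the number of orbits on ordered pairs of distinct elements equals the number on unordered pairs, namely $1$. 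Hence $X$ is $2$-transitive.

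For (iii), I will compare orbits on $\Omega_2$ with orbits on ordered pairs of distinct elements. If $X$ has rank $r$ on $\Omega$ (i.e.\ $r$ orbits on $\Omega \times \Omega$), then the number of orbits on pairs of \emph{distinct} ordered elements is $r-1$, since exactly one orbit (the diagonal) comes from coincident pairs. Each orbit on $\Omega_2$ lifts to either $1$ or $2$ orbits on ordered pairs (according to whether some element of $X$ swaps the pair or not), so the number of orbits on $\Omega_2$ is at most the number of orbits on ordered distinct pairs. For $r \leq 3$ this gives $f_2(X) \leq r-1 \leq 2$, which is (iii).

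The only step with any substance is the involution argument in (ii); the rest is directly orbit-counting. I don't foresee any obstacle, as no deep result such as the classification of $2$-homogeneous groups is required — the elementary conjugation trick suffices.
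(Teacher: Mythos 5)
Your proof is correct and follows essentially the same approach as the paper: (i) is by definition, (ii) uses the same conjugation-of-an-involution trick (you spell out the final step slightly more explicitly), and (iii) rests on the same orbit count, phrased via ordered distinct pairs rather than via point-stabilizer orbits on $\Omega\setminus\{x\}$ — these are the same number when $X$ is transitive. The only cosmetic difference is that the paper handles rank $2$ by citing (ii) while your argument treats rank $\leq 3$ uniformly.
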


\begin{proof}
(i) is obvious: $X$ is $2$-homogeneous precisely when it acts transitively on 
$\Omega_2$. For (ii), observe that $X$ contains an involution $t$, and so 
we can find $x,y \in \Omega$ interchanged by $t$. It follows that $X$ is $2$-homogeneous 
precisely when it is $2$-transitive.

For (iii), note that if $X$ is a rank $2$ subgroup $f_2(X) = 1$ by (ii). If $X$ is a rank $3$ subgroup, 
then the point stabilizer of 
$x \in \Omega$ in $X$ has two orbits on $\Omega \setminus \{x\}$, whence
$X$ has at most two orbits on $\Omega_2$.
\end{proof}

Note that since $p=2$ in the following proposition, the condition $X=O^2(X)$ is equivalent to the condition $\Hom(X,\F)=0$ from Lemma~\ref{inv-easy}. In many applications $X$ will be perfect, in which case this assumption of course holds. 

\begin{proposition}\label{inv}
Let $p=2{|}n\geq 6$, $N$ the $\F \SSS_n$-submodule of $M_3$  
specified in Lemma \ref{Nker}(ii), and let $J$ be any nonzero $\F\SSS_n$-quotient of $N$. 
Suppose that $X=O^2(X) \leq \SSS_n$ is a 
subgroup such that 
$$f_1(X) = 1,~~e_3(X) \geq h(X)+1,\mbox{ and either }f_2(X) \geq 3\mbox{ or }\Specht_2^X \neq 0.$$  
Then $J^X \neq 0$. Moreover, if $J^{\SSS_n} \neq 0$, then $\dim J^X \geq 2$.
\end{proposition}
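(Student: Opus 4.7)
The plan is to separate into the cases $n \equiv 0 \pmod 4$ and $n \equiv 2 \pmod 4$, using the detailed submodule structures of $N$ from Lemmas~\ref{L0Mod4} and~\ref{L2Mod4} respectively. Throughout, the hypothesis $X = O^2(X)$ gives $\Hom(X, \F) = 0$, so $H^1(X, \triv) = 0$ and Lemma~\ref{inv-easy}(ii) is available freely for trivial submodules. First I would establish three preliminary facts: (a) since $f_1(X) = 1$, $M_1^X = \F \cdot T_1$ and thus $\Specht_1^X = \F \cdot T_1$, whence $\dim D_1^X = 0$ via Lemma~\ref{inv-easy}(ii) applied to $0 \to \F T_1 \to \Specht_1 \to D_1 \to 0$; (b) under either alternative of the hypothesis, $\Specht_2^X \neq 0$ --- for the case $f_2(X) \geq 3$, by bounding $\dim (M_2/\Specht_2)^X \leq 2$ using the structure in Lemma~\ref{M2} together with~(a); (c) $\dim D_2^X \geq 1$, since $\soc(\Specht_2) = D_1$ has no nonzero $X$-invariants.

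For the case $n \equiv 0 \pmod 4$, Lemma~\ref{L0Mod4} makes $N$ uniserial with socle layers $D_2, D_1, D_3, D_1, D_2$ from bottom to top, decomposes $M_3 = \image\eta_{1,3} \oplus N$, and gives a surjection $\eta_{3,2}|_N : N \twoheadrightarrow \Specht_2$ with kernel $\soc^3(N)$. Since $N$ has no trivial composition factor, the second clause of the proposition is vacuous. Every nonzero quotient of the uniserial $N$ has head $D_2$, so it suffices to show $N^X \not\subseteq \rad(N)$; equivalently, since $\soc^3(N) \subsetneq \rad(N)$, it is enough to show that the induced map $N^X \to \Specht_2^X$ is nonzero. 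Using $\dim N^X = f_3(X) - 1$ together with the two short exact sequences $0 \to \F T_1 \to M_1 \to Q \to 0$ and $0 \to N \to M_3 \to M_1 \to 0$, I would argue via a diagram chase that the failure of this map to be nonzero is bounded by an $H^1(X, Q)$-obstruction, and that the hypothesis $e_3(X) \geq h(X) + 1$ --- since $h(X) = \dim H^1(X, Q)$ --- provides the surplus dimension needed to defeat it.

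For the case $n \equiv 2 \pmod 4$, Lemma~\ref{L2Mod4} gives $\soc(N) = \F \cdot T_3 \cong \triv$ and a filtration $\F T_3 \subset \soc^2(W) \subset \image\eta_{1,3} + \soc^2(W) \subset N$, with $\image\eta_{1,3} \cong M_1$ and the quotient $N'' := N/(\image\eta_{1,3} + \soc^2(W))$ uniserial with socle layers $D_3, D_2$. Because $\soc(N)$ is simple, every nonzero submodule of $N$ contains $T_3$; hence $J^{\SSS_n} \neq 0$ occurs exactly when $K = 0$, i.e., $J = N$. In that case I would produce two independent $X$-invariants in $N$: namely $T_3$, and a second invariant obtained by lifting a nonzero element of $\Specht_2^X$ through the surjection $N \twoheadrightarrow N/\Specht_3 \cong \Specht_2$, the lift being secured by $e_3(X) \geq h(X) + 1$. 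For each proper nonzero $K$ (necessarily containing $T_3$), I would walk up the filtration: if $K \subsetneq \soc^2(W)$, use the $D_2$-invariant inside $\soc^2(W)/\F T_3 \cong D_2$; if $\soc^2(W) \subseteq K \subsetneq \image\eta_{1,3} + \soc^2(W)$, lift an invariant from $\image\eta_{1,3}/\F T_3 \cong Q$ (again with obstruction in $H^1(X, Q)$); and if $K$ contains $\image\eta_{1,3} + \soc^2(W)$, then $J$ is a quotient of $N''$ to which the hypothesis supplies the needed invariant. The main obstacle throughout is producing an $X$-invariant lying deep in the filtration --- reaching the head $D_2$ of $N$ for $n \equiv 0 \pmod 4$ and the uniserial $N''$ for $n \equiv 2 \pmod 4$; in each case the hypothesis $e_3(X) \geq h(X) + 1$ is engineered precisely to bound the $H^1(X, Q)$-obstruction to such liftings, and identifying $h(X)$ as exactly this obstruction is the technical heart of the argument.
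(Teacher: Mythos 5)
Your plan follows the same broad outline as the paper's proof (split by $n \bmod 4$, exploit the submodule structure of $N$ from Lemmas~\ref{L0Mod4} and~\ref{L2Mod4}, get $D_1^X = 0$ and $\Specht_2^X\neq 0$ as preliminary facts), but several of the individual steps have genuine gaps that would not be easy to close in the form you propose.

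In Case $n\equiv 0\pmod 4$, the assertion that showing $N^X\not\subseteq\rad(N)$ is ``equivalently'' the same as showing the induced map $N^X\to\Specht_2^X$ is nonzero is false: $\soc^3(N)\subsetneq\rad(N)$, so a fixed vector $v\in N^X\setminus\soc^3(N)$ may well lie in $\rad(N)$. (That slip is harmless because the quotient $N/\rad(N)\cong D_2$ is handled separately by $D_2^X\neq 0$.) The serious problem is that showing the map $N^X\to\Specht_2^X$ is nonzero amounts to showing $\dim N^X>\dim\Specht_3^X$, and $\Specht_3 = \soc^3(N)$ has composition factors $D_2$, $D_1$, $D_3$. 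Nothing in the hypotheses controls $\dim D_3^X$, so the inequality cannot be extracted from a bound involving only $f_r$, $h$, and $D_2^X$. The paper avoids the $D_3$ layer entirely by working \emph{bottom-up}, one socle layer at a time: $N^X\supseteq D_2^X\neq 0$; then $\dim N^X = f_3-1 > \dim D_2^X = \dim(\soc N)^X$ gives $(N/\soc N)^X\neq 0$; then $D_1^X=0$ propagates this to $(N/\soc^2 N)^X\neq 0$; and the top two quotients are handled directly by $\Specht_2^X\neq 0$ and $D_2^X\neq 0$. There is no ``jump'' past the $D_3$ layer, and the $H^1(X,Q)$ obstruction only enters through the inequality $f_2\geq 1+\dim D_2^X - h$ (the paper's (\ref{d2})), not through any diagram chase involving $\Specht_3$.

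In Case $n\equiv 2\pmod 4$, the claim that $J^{\SSS_n}\neq 0$ forces $K=0$ is wrong. For instance $K=\soc^2(\image\eta_{1,3})$ satisfies $0\neq K\supsetneq\F T_3$, yet $N/K$ contains $\image\eta_{1,3}/K\cong\triv$; the paper's Step~2 explicitly handles nonzero $K$ with $J^{\SSS_n}\neq 0$, via a careful analysis of the preimage $P'$ of the trivial in $N'$. Also, your ``walk up the filtration'' dichotomy misses submodules $K$ that are not comparable to $\soc^2(W)$; the correct reduction is to $\soc^2(W)\subseteq K$, which the paper obtains by observing that if $\soc^2(W)\not\subseteq K$ then $J$ contains a nonzero quotient of $\soc^2(W)$ (uniserial $\triv,D_2$), which has $X$-fixed points either way. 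Finally, the step ``lift an invariant from $\image\eta_{1,3}/\F T_3\cong Q$'' cannot work as stated, since $Q^X=0$; the actual crux in the paper is a fairly intricate dimension bound $\dim(\soc^3 W)^X\leq f_2+h$ obtained from the structure of $Y:=M_2'$, which combined with $\dim(N+\soc^3 W)^X = f_3$ and the hypothesis $e_3\geq h+1$ yields $(N')^X\neq 0$, and then $Q'^X=0$ gives $(N'')^X\neq 0$. Your sketch does not contain this dimension argument, which is precisely where the condition $e_3\geq h+1$ does its real work.

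A minor point: in your preliminary fact (c), the argument for $D_2^X\neq 0$ ``since $\soc(\Specht_2)=D_1$ has no invariants'' only works for $n\equiv 0\pmod 4$, where $\Specht_2$ has just the two layers $D_1,D_2$. For $n\equiv 2\pmod 4$ the layers are $D_1,\triv,D_2$, and one must also rule out the fixed point living in the middle trivial layer; that requires knowing $(\soc^2\Specht_2)^X=0$, which follows from $\soc^2\Specht_2\cong Q$ and $Q^X=0$ but is not something your sketch addresses. (The paper derives $D_2^X\neq 0$ using $Q^X=0$ and the submodule structure of $M_2'$ directly.)
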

\begin{proof}
We write $f_r$ for $f_r(X)$, $e_r$ for $e_r(X)$, and $h$ for $h(X)$. 
Note that $Q^X=0$ and $D_1^X = 0$ since $f_1 = 1$ and 
$X = O^2(X)$. Combining this with the structure of $M_2$ given in Lemma \ref{M2}
and applying Lemma \ref{inv-easy}, we see that 
\begin{equation}\label{d2}
  f_2 \geq 1+\dim D_2^X-\dim H^1(X,Q) = 1+ \dim D_2^X-h.
\end{equation}

Note that $D_2^X\neq 0$. Indeed, if $f_2\geq 3$, this follows by considering composition factors of $M_2$ described in Lemma \ref{M2} and using $Q^X=D_1^X=0$. On the other hand, if $\Specht_2^X\neq 0$, this follows by considering composition factors of $\Specht_2$ using $D_1^X=0$. 

\medskip
{\bf Case 1:} $n\equiv 0\pmod 4$. Then  $N$ 
is uniserial by Lemma~\ref{M3}, and we are going to check that $J^X\neq 0$ for each of its five non-trivial quotients $J$. This is all we have to do, since $\triv$ is not a composition factor of $N$, and so we never have $J^{\SSS_n} \neq 0$. 

Note that $\soc(N)\cong D_2$, so $N^X\supseteq D_2^X\neq 0$.
By assumption, we have $f_3 \geq f_2+h+1$, so (\ref{d2}) implies  
$f_3\geq \dim D_2^X+2$. Since $M_3 = M_1 \oplus N$ and 
$f_1 = 1$, it follows that $(N/\soc(N))^X\neq 0$.
Then since $D_1^X=0$, we also get $(N/\soc^2(N))^X\neq 0$.
Next, $N/\soc^3(N)\cong \Specht_2$ by Lemma~\ref{L0Mod4}. 
If $\Specht_2^X\neq 0$, we are done. Otherwise, the conditions $f_2\geq 3$ and $D_1^X = 0$ imply that 
$(N/\soc^3(N))^X\neq 0$. Finally, $N/\rad(N)\cong D_2$ and we 
already have $D_2^X\neq 0$. 

\medskip
{\bf Case 2:} $n\equiv 0\pmod 4$. We are going to use the notation of Lemma~\ref{L2Mod4}. 

\smallskip
{\em Step 1:} we prove that 
$J^X \neq 0$ for any nonzero quotient $J=N/K$ of $N$.

By Lemma~\ref{L2Mod4}(i), we have the submodule $\soc^2(W)\subset N$ which is uniserial with socle layers $\triv, D_2$. So any nonzero quotient of $\soc^2(W)$ either 
contains $\triv$ or is isomorphic to $D_2$, hence it contains nonzero 
$X$-fixed points. In particular, $(N/K)^X > 0$ if $\soc^2(W) \not\subseteq K$, and we may now assume that $\soc^2(W)\subseteq K$. In other words, we are reduced to showing 
that $X$ has nonzero fixed points on every nonzero $\F \SSS_n$-quotient of 
$N'=N/\soc^2(W)$. 

Recall that $M_2 \cong Y \oplus \triv$, see Lemma~\ref{L2Mod4}. In particular, $\dim Y^X=f_2-1$, and so 
$\dim (\soc^4(Y))^X=f_2-1$, since $X$ has no fixed points on $U/\soc^4(U)\cong D_1$.
Applying Lemma \ref{inv-easy}(i) to the exact sequence 
$$0 \to \soc^2(Y) \to \soc^4(Y) \to \soc^4(Y)/\soc^2(Y)\to 0$$
with $(\soc^2(Y))^X \cong Q^X = 0$, we see that
\begin{equation}\label{le3}
  \dim (\soc^4(Y)/\soc^2(Y))^X\leq f_2+h-1.
\end{equation}
By Lemma~\ref{L2Mod4}(ii), we have $\eta_{2,3}(Y) = W\cong Y/\soc(Y)$. So  
$\soc^3(W) \cong \soc^4(Y)/\soc(Y)$ is an extension of  
$\soc^4(Y)/\soc^2(Y)$ by $\soc^2(Y)/\soc(Y)\cong \triv$. Together with (\ref{le3}) and Lemma \ref{inv-easy},
this implies that 
\begin{equation}\label{s3wx}
  \dim (\soc^3(W))^X \leq f_2+h. 
\end{equation} 
Since $Q^X = 0$, Lemma~\ref{L2Mod4}(iii) yields 
\begin{equation}\label{ns3w}
  \dim(N + \soc^3(W))^X = f_3.
\end{equation} 
Moreover, by the same lemma, we have
$$N' = N/\soc^2(W) = N/(N \cap \soc^3(W)) \cong (N+\soc^3(W))/\soc^3(W).$$
Since $f_3-f_2 = e_3 \geq h+1$ we deduce from (\ref{s3wx}) and (\ref{ns3w})  that $(N')^X \neq 0$.

Now we apply Lemma~\ref{L2Mod4}(iv).
Since $N'' = N'/Q'$ and $Q'^X \cong Q^X =0$, we have that $(N')^X \neq 0$ implies $N''^X\neq 0$. 
Recalling that $D_2^X\neq 0$ and ${\mathrm {head}}(N'') \cong D_2$, we have now 
shown that $X$ has nonzero fixed points on every nonzero quotient of $N''$.

It remains to consider quotients of $N'$ by nonzero submodules $R'$ which do not 
contain $Q'$. Since $Q'$ has a simple socle and $Q'/\soc(Q')\cong\triv$, we only need to  
consider $R'$ such that $R'\cap Q'=0$. We have shown in 
Lemma~\ref{L2Mod4}(v) that $\soc(N') = D' \oplus \soc(Q')$. Hence the condition $R' \cap Q'=0$ 
implies that $R' \supseteq D'$. So we must show that $X$ has nonzero fixed points 
on every nonzero quotient of the $\F \SSS_n$-module $N'/D' \cong \Specht_2$, see Lemma~\ref{L2Mod4}(v) again. By assumption, $\Specht_2^X\neq 0$ unless $f_2\geq 3$, in which case $D_1^X=0$ implies $\Specht_2^X\neq 0$.
The only proper quotients of $\Specht_2$ have socles $\triv$ and $D_2$ and we have seen already that $X$ has non-trivial fixed points on these simple modules. 

\smallskip
{\em Step 2:} we assume that $J=N/K$ contains  
a trivial $\SSS_n$-submodule $E \cong \triv$ and prove that $\dim J^X \geq 2$. We again use the notation of Lemma~\ref{L2Mod4}. 
Since we have shown that $D_2^X, (N'')^X \neq 0$, by 
Lemma \ref{inv-easy}(ii) it suffices to show that $J$ contains an
$\F\SSS_n$-submodule $L$ where $L/E$ is isomorphic to $D_2$ or $N''$.  This is obvious if $K = 0$. So we may assume
that $K \supseteq \soc(W) = \soc(N) \cong \triv$. 

If $K \cap \soc^2(W) = \soc(W)$, then 
$J$ contains $\soc^2(W)/\soc(W)\cong D_2$, and we can take 
$L \cong D_2 \oplus E$. 
Thus we may assume that $K\supseteq \soc^2(W)$ and so $J$ 
can be regarded as a quotient $N'/K'$ of $N'=N/\soc^2(W)$.
Let $P'$ be the preimage of $E$ in $N'$ so that $E \cong P'/K'$. 
Since $\triv$ is not a 
composition factor of $N'/Q'$, we see that $Q'+K' \supseteq P'$, whence
$Q'/(Q' \cap K') \cong (Q'+K')/K'$ contains $E = P'/K'$. But $Q'$ has 
socle layers $D_1,\triv$, so $Q' \cap K' = \soc(Q')$
and $P' = Q'+K'$. Recall that $N'/Q' = N''$ is uniserial with 
socle layers $D_3$, $D_2$. 
So if $P'\neq N'$ then $J\cong N'/K'$ is isomorphic to an extension of the nonzero quotient $N'/P'$ of $N'/Q'\cong N''$  by $P'/K'=E$,  and  we may take $L=J$.

Assume $P' = N'$, so that $N'/K' \cong \triv$. Since $D' \cong D_3$ is simple, 
it follows that $D' \subset K'$. Now we see that $N'/K' \cong \triv$ is a
quotient of $N'/D'$ which is isomorphic to $\Specht_2$ by 
Lemma~\ref{L2Mod4}(v), a contradiction since ${\mathrm {head}}(\Specht_2) \cong D_2$. 
\end{proof}

Now we can prove the main result of this section:

\begin{theorem}\label{reduction}
Let $p=2|n \geq 6$, $V$ be an irreducible $\F\SSS_n$-module 
satisfying $d_3(V)>d_1(V)$, and 
$X=O^2(X) \leq \SSS_n$ be a subgroup such that 
$$f_1(X) = 1,~~e_3(X) \geq h(X)+1,\mbox{ and either }f_2(X) \geq 3\mbox{ or }\Specht_2^X \neq 0.$$  
Then the restriction $\Res^{\SSS_n}_X V$ is reducible.
\end{theorem}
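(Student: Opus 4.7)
The proof is essentially the concatenation of two results already established in this section, so my plan is to observe this explicitly and invoke them in order. The hypotheses on $X$ imposed in Theorem~\ref{reduction} (namely $X=O^2(X)$, $f_1(X)=1$, $e_3(X) \geq h(X)+1$, and either $f_2(X)\geq 3$ or $\Specht_2^X\neq 0$) are precisely the hypotheses of Proposition~\ref{inv}.

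Thus the first step is to apply Proposition~\ref{inv} to the submodule $N\subseteq M_3$ singled out in Lemma~\ref{Nker}(ii). This yields that for every nonzero $\F\SSS_n$-quotient $J$ of $N$, we have $J^X\neq 0$, and moreover $\dim J^X\geq 2$ whenever $J^{\SSS_n}\neq 0$. These are exactly the conditions on quotients of $N$ demanded in the hypothesis of Theorem~\ref{prereduction}.

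The remaining hypotheses of Theorem~\ref{prereduction} --- that $p=2\mid n\geq 6$, that $V$ is an irreducible $\F\SSS_n$-module with $d_3(V)>d_1(V)$, and that $X\leq \SSS_n$ --- are part of the hypotheses of Theorem~\ref{reduction}. So the second and final step is to apply Theorem~\ref{prereduction} and deduce that $\Res^{\SSS_n}_X V$ is reducible.

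There is no real obstacle here: the heavy lifting has been done in Proposition~\ref{inv} (the permutation-module bookkeeping using the structure of $N$ described in Lemmas~\ref{L0Mod4} and~\ref{L2Mod4}) and in Theorem~\ref{prereduction} (the Hom-space/Frobenius reciprocity argument using $d_3(V)>d_1(V)$). The present theorem is simply the clean combined statement packaging these two ingredients for use in the sequel.
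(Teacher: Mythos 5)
Your proposal is correct and is essentially identical to the paper's own proof, which simply says to apply Theorem~\ref{prereduction} and Proposition~\ref{inv}. You have correctly identified that Proposition~\ref{inv} supplies exactly the fixed-point conditions on quotients of $N$ required by Theorem~\ref{prereduction}, and that chaining the two gives the reducibility conclusion.
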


\begin{proof}
Apply Theorem~\ref{prereduction} and Proposition \ref{inv}. \end{proof}

Next we show that the condition $\Specht_2^X \neq 0$ is a fairly mild condition which 
{\it always} holds for rank $3$ permutation groups of even degree $n$:

\begin{lemma}\label{rank3}
Let $2|n \geq 6$ and let $G \leq \SSS_n$ be a rank $3$ permutation group.
Then $\Specht_2^G \neq 0$.
\end{lemma}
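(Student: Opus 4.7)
The plan is to exploit the description $\Specht_2 = \Ker\eta_{2,0} \cap \Ker\eta_{2,1}$ from~(\ref{specht_module}) and reduce the claim to a $2\times 2$ determinant calculation over $\F$.

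Since $G$ has rank $3$, it is transitive on $\Omega$, so $n \mid |G|$ and $|G|$ is even; in particular $G$ contains an involution. By Lemma~\ref{e22}(ii), $G$ cannot be $2$-homogeneous (that would force it to be $2$-transitive, of rank $2$), so $G$ has exactly two orbits $O_1, O_2$ on $\Omega_2$, and the indicator sums $v_i := \sum_{A \in O_i} A$ form an $\F$-basis of $M_2^G$. Let $k_i$ denote the number of sets in $O_i$ containing a given element of $\Omega$, which is well-defined and independent of the chosen point by transitivity of $G$ on $\Omega$.

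A direct computation gives $\eta_{2,0}(v_i) = |O_i| \in M_0 = \F$ and $\eta_{2,1}(v_i) = k_i \, T_1$, while $M_1^G = \F \cdot T_1$ since $G$ is transitive on $\Omega$. Hence by~(\ref{specht_module}), $\Specht_2^G$ is the kernel of the $\F$-linear map $M_2^G \to M_0 \oplus M_1^G \cong \F^2$ whose matrix in the chosen bases is
$$
M \;=\; \begin{pmatrix} |O_1| & |O_2| \\ k_1 & k_2 \end{pmatrix} \pmod 2.
$$

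The crux is the double-counting identity $2|O_i| = nk_i$, obtained by counting pairs $(x,A)$ with $x \in A \in O_i$ in two ways. Since $n$ is even this is an honest integer relation $|O_i| = (n/2)\,k_i$, so the two rows of $M$ are $\Z$-proportional and $\det M = 0$. Therefore $M$ has a non-trivial kernel in $\F^2$, which supplies a non-zero element of $\Specht_2^G$. The main (and essentially only) conceptual step is establishing that the even-degree hypothesis forces $G$ to have two rather than one orbit on unordered pairs; once that is in place, the vanishing of $\det M$ is an immediate consequence of the orbit-size formula and the computation of the incidence maps on $M_2^G$.
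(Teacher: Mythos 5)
Your proof is correct and is essentially the paper's argument: both hinge on $G$ having exactly two orbits on $\Omega_2$ (forced by Lemma~\ref{e22}(ii)), the double-counting identity $|O_i| = (n/2)k_i$ (which the paper writes as $|\Phi| = n|\Delta_1|/2$, $|\Psi| = n|\Delta_2|/2$), and the characterization of $\Specht_2$ as $\Ker\eta_{2,0}\cap\Ker\eta_{2,1}$, which is dual to the perp description the paper uses via (\ref{adjoint}). The only real difference is cosmetic: the paper singles out the orbit sum over the even-size orbit and verifies directly that it lies in $\Specht_2$, whereas you package the same computation as the vanishing of a $2\times 2$ determinant, avoiding the need to decide which $k_i$ is even.
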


\begin{proof}
Since $2|n$, $|G|$ is even and so $G$ contains an involution $j$. Choose
$x_0 \in \Omega = \{1,2, \ldots,n\}$ {\it not} fixed by $j$.
By assumption, the stabilizer $G_{x_0}$ of $x_0$ 
has two orbits $\Delta_1$, $\Delta_2$ on $\Omega \setminus \{x_0\}$. Since
$|\Delta_1| + |\Delta_2| = n-1$ is odd, we may assume that $|\Delta_1|$ 
is odd and $|\Delta_2| $ is even. Since $|G|$ is even, it follows 
by Lemma \ref{e22}(ii), (iii) that $G$ has exactly two orbits 
$\Phi$ and $\Psi$ on the set $\Omega_2$ of $2$-subsets $\{x,y\}$ of $\Omega$, where
$\{x_0,z\} \in \Psi$ if and only $z \in \Delta_2$. Note that
$$|\Phi| = n|\Delta_1|/2,~~|\Psi| = n|\Delta_2|/2.$$
(Indeed, suppose for instance that $j(x_0) =: z \in \Delta_1$. By counting we see that the number of 
{\it ordered} pairs $(x,y)$ with $\{x,y\} \in \Phi$ is $n|\Delta_1|$. Since $j$ interchanges 
$x_0$ and $z$, it follows that $|\Phi| = n|\Delta_1|/2$ and so $|\Psi| = n|\Delta_2|/2$. The same
argument applies if $z \in \Delta_2$.)

Now $\Phi \cup \Psi$ forms a basis for $M_2$. Since $\Psi$ is a $G$-orbit, it suffices to show that the orbit sum $\hat\Psi := \sum_{\{x,y\} \in \Psi}\{x,y\}$
belongs to $\Specht_2$.  A standard fact about Specht modules following from 
(\ref{specht_module}) and (\ref{adjoint}) is that
$$\Specht_2 = \langle T_2 \rangle^{\perp} \cap \eta_{1,2}(M_1)^\perp,$$
where perpendicularity is with respect to the natural inner product $\langle \cdot,\cdot\rangle_2$ on
$M_2$. Now,
$$\langle \hat\Psi,T_2 \rangle_2 = |\Psi| = n|\Delta_2|/2 \equiv 0 (\bmod 2).$$
Next, 
$$\langle \hat\Psi,\eta_{1,2}(x_0)\rangle_2 = |\Delta_2| \equiv 0 (\bmod 2),$$
and similarly $\langle  \hat\Psi,\eta_{1,2}(x)\rangle_2 = 0$ for all $x \in \Omega$.
\end{proof}

\section{Special embeddings of $\AAA_m$ into $\AAA_n$}
Let $X\cong \AAA_m$ with $m \geq 5$. 
In this section, we consider two {\em special}\, kinds of embeddings of $X$ into symmetric groups $\SSS_n$. The first arises from the action of $X$ on $k$-subsets of 
$$\Delta := \{1,2, \ldots, m\}$$ 
with $2 \leq k < m/2$, giving rise to an embedding
of $X$ into $\SSS_n$, where $n=\binom{m}{k}$.  
The second embedding comes from the action on set partitions of $\Delta$ into
$b$ subsets of size $a$, where $m=ab$. This gives rise to an embedding into
$\SSS_n$, where $n=(ab)!/((a!)^b \cdot b!)$. 

\begin{lemma}\label{start}
Suppose that $m \geq 8$. Then for any of the two special embeddings, we have 
that $n \geq m(m-1)/2$.
\end{lemma}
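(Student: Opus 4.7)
The plan is to handle the two special embeddings separately, since they involve quite different counting.

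For the subset embedding, where $n = \binom{m}{k}$ with $2 \leq k < m/2$, the claim is immediate: the binomial coefficient $\binom{m}{j}$ is non-decreasing in $j$ on the range $0 \leq j \leq m/2$, so $\binom{m}{k} \geq \binom{m}{2} = m(m-1)/2$ for any $k \geq 2$ in the allowed range, with equality precisely at $k=2$.

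For the set-partition embedding, where $m = ab$ (with $a,b \geq 2$) and $n = m!/((a!)^b b!)$, the desired inequality $n \geq m(m-1)/2$ is equivalent to $(a!)^b b! \leq 2(m-2)!$, and I would prove it by a short case analysis depending on which of $a,b$ equals $2$. If $b = 2$, so $a = m/2 \geq 4$, the inequality collapses to $(a!)^2 \leq (2a-2)!$, or equivalently $\binom{2a-2}{a-1} \geq a^2$; this is verified at $a=4$ (giving $20 \geq 16$) and then follows for larger $a$ because the ratio $\binom{2a-2}{a-1}/a^2$ is increasing in $a$. If $a = 2$, so $b = m/2 \geq 4$, the inequality becomes $2^{b-1}b! \leq (2b-2)!$, which holds at $b=4$ (giving $192 \leq 720$) and is preserved when $b$ increases, since passing from $b$ to $b+1$ multiplies the right-hand side by $(2b)(2b-1)$ and the left-hand side by only $2(b+1)$. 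If $a, b \geq 3$, then $m \geq 9$, and a check at the boundary case $(a,b) = (3,3)$ gives $(3!)^3\cdot 3! = 1296 \leq 10080 = 2\cdot 7!$; enlarging either $a$ or $b$ only widens the gap, since $2(m-2)!$ then grows strictly faster than $(a!)^b b!$ by the same kind of ratio comparison used in the previous cases.

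The main obstacle is essentially nothing, as the lemma is a numerical verification. The only mildly delicate points are the two boundary cases $(a,b) = (4,2)$, where $(a!)^b b! = 1152$ and $2(m-2)! = 1440$ leave relatively little slack, and $(a,b) = (3,3)$; both are handled by direct computation, and the rest of the parameter space is controlled by monotonicity of the relevant ratios.
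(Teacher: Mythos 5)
Your subset argument is identical to the paper's. For the set-partition case, the paper instead proves once and for all that the ratio $N_{a,b+1}/N_{a,b}=(ab+1)(ab+2)\cdots(ab+a-1)/(a-1)!\geq ab+1\geq 5$, and then combines this with the two base-case checks $N_{a,2}>\binom{2a}{2}$ for $a\geq 4$ and $N_{a,3}\geq\binom{3a}{2}$ for $a\geq 2$ to propagate the inequality up in $b$. Your version fixes the inequality as $(a!)^{b}\,b!\leq 2(m-2)!$ and splits into the cases $b=2$, $a=2$, and $a,b\geq 3$, with a base-point check in each; this covers the same parameter range and the ratio verifications you invoke do check out. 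The one place you should tighten up is the final case $a,b\geq 3$: the phrase ``the same kind of ratio comparison used in the previous cases'' is doing real work there, since increasing $a$ (resp.\ $b$) multiplies the right side by $b$ (resp.\ $a$) new factors of size $\geq ab-1$, and one should at least record that $ab-1>a+1$ and $ab-1>b+1$ in this range so that these new factors dominate the corresponding growth $(a+1)^{b}$ or $a!\,(b+1)$ of the left side. With that spelled out, the argument is complete; it trades the paper's single clean ratio bound $\geq 5$ for three shorter monotonicity checks, which is neither simpler nor harder, just a different bookkeeping of the same elementary estimate.
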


\begin{proof}
For the first embedding, observe that the sequence $\binom{m}{k}$ is
increasing for $2 \leq k < m/2$, whence $\binom{m}{k} \geq \binom{m}{2}$. 
For the second embedding, denote $N_{a,b}:= (ab)!/((a!)^b \cdot b!)$ and 
observe that 
\begin{equation}\label{nab}
  \frac{N_{a,b+1}}{N_{a,b}} = \frac{(ab+1)(ab+2) \ldots (ab+a-1)}{(a-1)!}
    \geq ab+1 \geq 5
\end{equation}
as long as $a,b \geq 2$. An induction on $a$ shows that 
$N_{a,2} > \binom{2a}{2}$ for $a \geq 4$. Now another induction on $b \geq 2$ 
using (\ref{nab}) shows that $N_{a,b} > \binom{ab}{2}$ whenever $a \geq 4$
and $b \geq 2$. Next, an induction on $a$ shows that 
$N_{a,3} \geq \binom{3a}{2}$ for $a \geq 2$, with equality only when $a = 2$. 
Now another induction on $b \geq 3$ using (\ref{nab}) shows that 
$N_{a,b} > \binom{ab}{2}$ if $a \geq 2$, $b \geq 3$, and $ab \geq 8$.
\end{proof}

Recall that the $\SSS_n$-module $Q$ defined in (\ref{ses1}) and the integer $h(X)$ defined in (\ref{EH}). 

\begin{lemma}\label{h1}
Let $p=2$. We have:
\begin{enumerate}
\item[{\rm (i)}] $\dim H^2(X,\F)=1$.
\item[{\rm (ii)}] If $n$ is even, then one of the following statements holds:
\begin{enumerate} 
\item[(a)] $\dim H^1(X,M_1)=1$ and $h(X) \leq 2$.
\item[(b)] $4|m$, $\AAA_m$ embeds into $\AAA_n$ via its action on partitions 
$(m/2,m/2)$ of $\Delta$, and 
$\dim H^1(X,M_1) = 2$, $h(X) \leq 3$.
\end{enumerate}
\end{enumerate}
\end{lemma}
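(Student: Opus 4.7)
The strategy will be to combine Shapiro's lemma, a standard Schur multiplier computation, and a case-by-case analysis of abelianizations of point stabilizers.

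For part (i), I will invoke the classical fact: since $X=\AAA_m$ is perfect for $m\ge 5$, the universal coefficient theorem gives $H^2(X,\F)\cong\Hom(H_2(X,\Z),\F_2)$, and the Schur multiplier $H_2(\AAA_m,\Z)$ is known to be $\Z/2$ for $m\neq 6,7$ and $\Z/6$ for $m=6,7$; the mod-$2$ part is $\F_2$ in every case, giving $\dim H^2(X,\F)=1$.

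For part (ii), both embeddings are transitive with point stabilizer some $H\le X$, so $M_1\cong\Ind_H^X\triv$ and Shapiro's lemma yields $H^1(X,M_1)\cong\Hom(H,\F_2)$. I will then compute $\dim\Hom(H,\F_2)$ case by case. For the $k$-subset action, $H=\AAA_m\cap(\SSS_k\times\SSS_{m-k})$ has index $2$ in $\SSS_k\times\SSS_{m-k}$, and a commutator calculation (using $[\SSS_j,\AAA_j]=\AAA_j$ for $j\ge 3$ together with perfectness of $\AAA_j$ for $j\ge 5$) shows $[H,H]\supseteq\AAA_k\times\AAA_{m-k}$, so $H^{\mathrm{ab}}$ maps isomorphically onto the diagonal $\F_2$ of $(\SSS_k\times\SSS_{m-k})^{\mathrm{ab}}=\F_2\times\F_2$, giving $\dim\Hom(H,\F_2)=1$. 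For a partition embedding $H=\AAA_m\cap(\SSS_a\wr\SSS_b)$ with $b\ge 3$, I will apply inflation-restriction to $1\to N\to H\to\SSS_b\to 1$ with $N=H\cap\SSS_a^b$: the abelianization $N^{\mathrm{ab}}$ is isomorphic to the augmentation submodule $I\subset\F_2^b$ for $\SSS_b$, and the easy identity $\tau(e_i+e_j)+(e_i+e_j)=e_i+e_k$ for $\tau=(j,k)$ shows $I_{\SSS_b}=0$ whenever $b\ge 3$, so $\Hom(N,\F_2)^{\SSS_b}=0$ and hence $H^1(H,\F_2)\cong H^1(\SSS_b,\F_2)=\F_2$. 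For $b=2$ and $a$ odd, a direct computation shows that any $(s,s';\tau)\in H$ with $\sgn(s)\ne\sgn(s')$ has order $4$ modulo $\AAA_a\times\AAA_a$, so $H/(\AAA_a\times\AAA_a)\cong\Z/4$; combined with the perfectness of $\AAA_a\times\AAA_a$ for $a\ge 5$, this yields $H^{\mathrm{ab}}\cong\Z/4$ and $\dim\Hom(H,\F_2)=1$. Only when $b=2$ and $a$ is even (i.e.\ $4\mid m$) do both $\sgn(\pi)$ and the well-defined character $(s_1,s_2;\pi)\mapsto\sgn(s_1)=\sgn(s_2)$ survive as independent characters on $H$, and perfectness of $\AAA_a\times\AAA_a\subset[H,H]$ then shows these exhaust $H^{\mathrm{ab}}\cong\F_2\times\F_2$, giving $\dim\Hom(H,\F_2)=2$. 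This puts us in case (a) except for the $(m/2,m/2)$ partition embedding with $4\mid m$, which is case (b).

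To bound $h(X)=\dim H^1(X,Q)$, apply $H^*(X,-)$ to the short exact sequence $0\to\F\cdot T_1\to M_1\to Q\to 0$. Transitivity of $X$ on $\Omega$ gives $M_1^X=(\F\cdot T_1)^X=\F$, hence $Q^X=0$, and perfectness of $X$ gives $H^1(X,\F)=\Hom(X,\F_2)=0$. The long exact sequence therefore reduces to an injection $H^1(X,M_1)\hookrightarrow H^1(X,Q)$ with cokernel embedded in $H^2(X,\F)$, so $h(X)\le\dim H^1(X,M_1)+1$ by part (i), yielding $h(X)\le 2$ in case (a) and $h(X)\le 3$ in case (b). The main technical obstacle will be the wreath-product analysis: distinguishing the exceptional $(m/2,m/2)$ case from the other partition embeddings requires the careful order-$4$ computation for $b=2$ odd $a$ and the coinvariant calculation for $b\ge 3$.
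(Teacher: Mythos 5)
Your argument is correct and follows essentially the same route as the paper's: Shapiro's lemma / Frobenius reciprocity reduces $H^1(X,M_1)$ to $\Hom(X_1,\F)$, one then analyzes the abelianization of the point stabilizer case by case, and the bound on $h(X)$ comes from the same six-term stretch of the long exact sequence for $0\to\F\cdot T_1\to M_1\to Q\to 0$, using $H^1(X,\F)=0$ (perfectness of $X$) and part (i). The one stylistic difference is in the wreath-product case with $b\geq 3$: you run the inflation--restriction sequence for $1\to N\to X_1\to \SSS_b\to 1$ and compute the coinvariants $I_{\SSS_b}=0$, whereas the paper passes directly to $Y:=X_1/\AAA_a^b\cong 2^{b-1}\cdot\SSS_b$ and checks $[Y,Y]\supseteq 2^{b-1}$; these are equivalent computations and neither buys anything the other lacks. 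Two small points worth tightening. First, the statement "$N^{\mathrm{ab}}\cong I$" (and similarly "$H^{\mathrm{ab}}$ maps isomorphically onto the diagonal $\F_2$" in the $k$-subset case) is literally true only when the relevant alternating groups $\AAA_a$, $\AAA_k$, $\AAA_{m-k}$ are perfect, i.e.\ of degree $\geq 5$; for degrees $3,4$ the abelianization picks up an extra $\Z/3$. This does no harm because what you actually use is $\Hom(\AAA_j,\F_2)=0$, which holds for every $j\geq 1$ since $\AAA_j^{\mathrm{ab}}$ has odd order --- but it would be cleaner to phrase the reduction as "every $\F_2$-valued character of $N$ kills $\AAA_a^b$ since $\Hom(\AAA_a,\F_2)=0$, hence factors through $I$" rather than asserting an isomorphism of abelianizations. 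Second, there is a typo in the coinvariant computation: with $\tau=(j,k)$ one gets $\tau(e_i+e_j)+(e_i+e_j)=e_j+e_k$, not $e_i+e_k$; the conclusion $I_{\SSS_b}=0$ for $b\geq 3$ is unaffected.
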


\begin{proof}
(i) is a well-known fact about the Schur multiplier of $\AAA_m$.

(ii) By Frobenius reciprocity we have that 
$$H^1(X,M_1) \cong H^1(X_1,\F) \cong \Hom(X_1,(\F,+)),$$ 
where $X_1$ is the stabilizer in $X$ of a point on the set $\Omega$,
and $(\F,+)$ is the additive group of the field $\F$.
First we consider the case where $X$ is acting on $k$-sets of 
$\Delta$. Then 
$$X_1 = (\SSS_k\times \SSS_{m-k}) \cap \AAA_m \cong 
        (\AAA_k \times \AAA_{m-k}) \cdot 2.$$
Since $p= 2$, we have that $\Hom(\AAA_s,(\F,+)) = 0$ for all $s \geq 1$. Denoting by $C_2$ the group of order $2$, it follows
that 
$\Hom(X_1,(\F,+)) \cong \Hom(C_2,(\F,+))$ 
is one-dimensional.

Next we consider the case $X$ is acting on set partitions of 
$\Delta$ into $b \geq 2$ subsets of size $a = m/b \geq 2$. Then
$$X_1 = (\SSS_a \wr \SSS_b) \cap \AAA_m.$$
We may assume that the transposition $(1,2)$ fixes the set partition 
fixed by $X_1$. Then $(1,2)$ belongs to the base subgroup $B = \SSS_a^b$, whence
$[B : B \cap X_1] = 2$ and $X_1 \cong (B \cap X_1) \cdot \SSS_b$.
As mentioned above, $\Hom(\AAA_a,(\F,+)) = 0$. Hence
$$\Hom(X_1,(\F,+)) \cong \Hom(X_1/\AAA_a^b,(\F,+)) = \Hom(Y,(\F,+)),$$
where $Y = X_1/\AAA_a^b \cong 2^{b-1}\cdot \SSS_b$. If $b \geq 3$, then one
can check that $[Y,Y]$ contains the normal subgroup $2^{b-1}$, and so
$$\Hom(Y,(\F,+)) \cong \Hom(\SSS_b,(\F,+)) \cong 
  \Hom(C_2,(\F,+)) \cong \F$$
as $\Hom(\AAA_b,(\F,+)) = 0$. Assume that $b = 2$, i.e. $X_1$ fixes the 
partition $\Delta = \{1,2, \ldots ,a\} \cup \{a+1, \ldots ,m\}$. If 
$a \geq 3$ is odd, then the permutation 
$$g~:~(1,a+1,2,a+2)(3,a+3)\ldots (a,2a)$$
belongs to $X_1$ and $g^2 = (1,2)(a+1,a+2)$ gives rise to an involution 
in $Y$. Thus $Y$ is cyclic of order $4$, and so again 
$\Hom(Y,(\F,+)) \cong \F$. If $a$ is even, then $Y$ (of order $4$) is
generated by two involutions $(1,2)(a+1,a+2)$ and 
$(1,a+1)(2,a+2) \ldots (a,2a)$, whence
$\Hom(Y,(\F,+)) \cong \F^2$. This proves the claims on $\dim H^1(X,M_1)$ in (ii). 

Now the bounds on $h(X)$ in (ii) follow immediately from the portion
$$0=H^1(X,\F)\to H^1(X,M_1)\to H^1(X,Q)\to H^2(X,\F)$$
of the long exact sequence arising from (\ref{ses1}).
\end{proof}

\begin{lemma}\label{LFSubsets}
Let $X$ embed into $\SSS_n$ via its action on $k$-subsets of $\Delta$ for $2 \leq k < m/2$.
Then $f_2(X) = k$. 
\end{lemma}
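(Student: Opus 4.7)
The plan is to identify the $X$-orbits on $\Omega_2$ by reducing to counting $\SSS_m$-orbits and then verifying that none of these split when we restrict to $\AAA_m$.

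First I would note that $\Omega$ is the set of $k$-subsets of $\Delta$, so $\Omega_2$ consists of unordered pairs $\{A,B\}$ with $A \neq B$ and $|A|=|B|=k$. The natural invariant of such a pair under $\SSS_m = \Sym(\Delta)$ is the size $i := |A \cap B|$, which takes values in $\{0,1,\ldots,k-1\}$. A standard double-coset/orbit-counting argument shows that any two pairs with the same $i$ are $\SSS_m$-conjugate, so $\SSS_m$ has exactly $k$ orbits on $\Omega_2$.

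The main (and only substantive) step is to show that none of these orbits splits under restriction to $\AAA_m$; equivalently, for each pair $\{A,B\}$ the stabilizer $\Stab_{\SSS_m}(\{A,B\})$ contains an odd permutation. Fix such a pair with $|A\cap B|=i$, and set $C := A\cap B$, $A' := A\setminus B$, $B' := B\setminus A$, $D := \Delta\setminus(A\cup B)$, so $|C|=i$, $|A'|=|B'|=k-i\geq 1$, and $|D|=m-2k+i$. Then $\Stab_{\SSS_m}(\{A,B\})$ contains $\Sym(C)\times\Sym(A')\times\Sym(B')\times\Sym(D)$ (stabilizing $A$ and $B$ individually), together with a coset swapping $A$ and $B$. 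I would exhibit a transposition in this stabilizer by cases: if $k-i\geq 2$, take a transposition inside $\Sym(A')$; if $k-i=1$ and $k\geq 3$, then $|C|=i\geq 2$ and a transposition in $\Sym(C)$ works; if $k-i=1$ and $k=2$, then $|D|=m-3\geq 2$ (since $m>2k=4$ forces $m\geq 5$) and a transposition in $\Sym(D)$ works. In every case the stabilizer meets $\SSS_m\setminus\AAA_m$, so the $\SSS_m$-orbit coincides with the $\AAA_m$-orbit.

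Combining these two steps yields exactly $k$ orbits of $X\cong\AAA_m$ on $\Omega_2$, i.e.\ $f_2(X)=k$. The routine part is the $\SSS_m$-orbit count; the only point needing care is the case analysis in step three, which is straightforward given the hypothesis $2\leq k<m/2$.
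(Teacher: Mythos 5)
Your proof is correct and follows the same overall strategy as the paper: identify the $k$ orbits of $\SSS_m$ on pairs of $k$-subsets via the invariant $|A\cap B|\in\{0,\dots,k-1\}$, then show each $\SSS_m$-orbit is a single $\AAA_m$-orbit by producing an odd permutation in the stabilizer of a representative pair.

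In fact, your second step is more careful than the paper's. The paper asserts that for any $i\in A\setminus B$ and $j\in B\setminus A$ the transposition $(i,j)$ fixes the pair $\{A,B\}$; but this is only true when $|A\cap B|=k-1$ (for instance, with $A=\{1,2\}$, $B=\{3,4\}$, the transposition $(1,3)$ sends $A$ to $\{2,3\}\notin\{A,B\}$). Your three-way case analysis --- a transposition inside $A\setminus B$ when $k-i\ge 2$, inside $A\cap B$ when $k-i=1$ and $k\ge 3$, or inside $\Delta\setminus(A\cup B)$ when $k=2$, $i=1$ --- uses the hypothesis $2\le k<m/2$ to guarantee at least one of these sets has size $\ge 2$, and thereby covers all orbits, repairing the gap in the paper's one-line argument.
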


\begin{proof}
We claim that the orbits of $X$ or of $\SSS_m$ on pairs $\{A,B\}$ of 
distinct $k$-subsets of $\Delta$ are labeled by $j := |A \cap B|$ for $0 \leq j \leq k-1$, hence $f_2(X) = k$. Indeed, the claim is obvious for $\SSS_m$. 
Since $A \neq B$, we can find $i \in A \setminus B$
and $j \in B \setminus A$. Now the transposition 
$(i,j)$ fixes the pair $\{A,B\}$, and so 
$\SSS_m$ and $\AAA_m$ have the same orbits on pairs $\{A,B\}$.
\end{proof}

Next we handle the embedding of $\AAA_m$ into $\AAA_n$ via its action on $2$-subsets:
 
\begin{corollary}\label{pairs2}
Let $p=2$ and $m \geq 6$ be such that $n:={m\choose 2}$ is even. Let $X = \AAA_m$ embed into $\AAA_n$ via its
action on $2$-subsets of $\{1,2, \ldots ,m\}$. Suppose that an irreducible  
$\F\SSS_n$-module $V$ satisfies the condition $d_3(V)>d_1(V)$.  
Then $\Res^{\SSS_n}_X V$ is reducible.
\end{corollary}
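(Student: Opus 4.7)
The plan is to apply Theorem~\ref{reduction} to the subgroup $X \cong \AAA_m$ acting on $\Omega = \Omega_2(\Delta)$. Since $m \geq 6$, the group $X$ is simple non-abelian, hence $X = O^2(X)$; also $n = \binom{m}{2} \geq 6$ is even by hypothesis and $d_3(V) > d_1(V)$ by assumption, so I must verify the three remaining conditions: $f_1(X) = 1$, either $f_2(X) \geq 3$ or $\Specht_2^X \neq 0$, and $e_3(X) \geq h(X)+1$.

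The first is immediate since $\AAA_m$ is transitive on $2$-subsets. By Lemma~\ref{LFSubsets} we have $f_2(X) = 2$, so the second option fails; however, the action of $X$ on $\Omega$ is a rank $3$ permutation group, because the stabilizer of $\{1,2\}$ has exactly two orbits on the remaining $2$-subsets (those meeting $\{1,2\}$ in one element, and those disjoint from it). Thus Lemma~\ref{rank3} applies to give $\Specht_2^X \neq 0$.

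For the condition $e_3(X) \geq h(X)+1$, note that this embedding is via $2$-subsets rather than $(m/2,m/2)$-partitions, so we fall into case~(a) of Lemma~\ref{h1}(ii), yielding $h(X) \leq 2$; it therefore suffices to prove $f_3(X) \geq 5$. A $3$-subset of $\Omega$ is the same as a simple graph on $\Delta$ with exactly three edges, so the $\SSS_m$-orbits on $\Omega_3(\Omega)$ are parametrized by isomorphism types of such graphs: (i) three disjoint edges, (ii) a path of length $2$ together with a disjoint edge, (iii) the star $K_{1,3}$, (iv) a path of length $3$, and (v) a triangle. All five types are realizable for $m \geq 6$, so $\SSS_m$ has exactly five orbits. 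To upgrade this to $\AAA_m$-orbits, I would check case by case that for each representative the $\SSS_m$-stabilizer contains an odd permutation: in types (i), (ii), (iii), (v) a transposition living inside the support of the configuration (within one of the edges, or within the three leaves of a star, or within the triangle) suffices, while in type (iv) the stabilizer on the support is generated by the reflection of $P_3$ and consists only of even permutations, but since this configuration uses only $4$ of the $m \geq 6$ vertices, a transposition of two of the remaining isolated points is an odd stabilizing permutation. Hence $\SSS_m$- and $\AAA_m$-orbits coincide, and $f_3(X) = 5$, giving $e_3(X) = 3 \geq h(X)+1$.

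All three hypotheses of Theorem~\ref{reduction} are then satisfied, yielding reducibility of $\Res^{\SSS_n}_X V$. I expect the main obstacle to be the orbit count in step (3), and in particular the verification that the $\AAA_m$-orbits and $\SSS_m$-orbits on $\Omega_3(\Omega)$ coincide for each of the five graph types; the remaining steps are direct invocations of results proved earlier in the paper.
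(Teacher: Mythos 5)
Your proof is correct and follows essentially the same route as the paper: verify the hypotheses of Theorem~\ref{reduction} using Lemma~\ref{h1} for $h(X)\leq 2$, Lemma~\ref{LFSubsets} for $f_2(X)=2$, Lemma~\ref{rank3} for $\Specht_2^X\neq 0$, and a direct count of five $\SSS_m$-orbit representatives on triples of $2$-subsets to get $f_3(X)\geq 5$ and hence $e_3(X)\geq 3\geq h(X)+1$. Two minor remarks: (1) you spend effort showing the $\AAA_m$-orbits and $\SSS_m$-orbits on $\Omega_3$ coincide (the $P_4$ case needing a transposition of spectator points), but this is not needed for the argument, since $f_3(X)$ counts $\AAA_m$-orbits and every $\SSS_m$-orbit is a union of $\AAA_m$-orbits, so observing that $\SSS_m$ has five orbits already gives $f_3(X)\geq 5$ — this is exactly what the paper does; (2) you correctly make explicit why $X$ is rank $3$ on $\Omega$, which the paper leaves implicit when invoking Lemma~\ref{rank3}.
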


\begin{proof}
By Lemma \ref{h1}(ii), we have that $h(X) \leq 2$. On the other hand, $f_1(X) = 1$, and 
$f_2(X) = 2$ by Lemma \ref{LFSubsets}. Also,  $f_3(X) \geq 5$. Indeed, we can regard the 
$\F \SSS_n$-permutation module $M_1$ as having a basis consisting of all $2$-subsets $\{i,j\}$ of 
$\Delta = \{1,2, \ldots ,m\}$. Then the module $M_3$ has a basis consisting of unordered triples of distinct pairs, and $\SSS_m$ has $5$ orbits on this set represented by the triples
$$
\begin{aligned}
&\{\{1,2\}, \{3,4\}, \{5,6\}\},\qquad  \{\{1,2\}, \{3,4\}, \{4,5\}\}
,\qquad  \{\{1,2\}, \{2,4\}, \{3,4\}\}\\
&\{\{1,2\}, \{1,3\}, \{1,4\}\}
,\qquad  \{\{1,2\}, \{1,3\}, \{2,3\}\}.
\end{aligned}
$$
In particular, $e_3(X) \geq 3 \geq h(X)+1$; furthermore, $\Specht_2^X \neq 0$ by Lemma \ref{rank3}. Hence we are done by Theorem \ref{reduction}. 
\end{proof}

\begin{lemma}\label{e2}
Suppose that $m \geq 11$. Then $e_2(X)\geq 2$, unless
$\AAA_m$ embeds in $\AAA_n$ via its action on $2$-subsets of 
$\Delta$, in which case $e_2(X) = 1$.
\end{lemma}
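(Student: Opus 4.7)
Since $X=\AAA_m$ is transitive on $\Omega$ in either embedding---because the $\SSS_m$-stabilizer of a point, $\SSS_k\times\SSS_{m-k}$ or $\SSS_a\wr\SSS_b$, contains a transposition and hence meets both cosets of $\AAA_m$ in $\SSS_m$---we have $f_1(X)=1$, and the task reduces to computing $f_2(X)$. For the $k$-subsets embedding with $2\leq k<m/2$, Lemma~\ref{LFSubsets} immediately yields $f_2(X)=k$, so $e_2(X)=k-1$ equals $1$ precisely when $k=2$ (the exception) and is $\geq 2$ when $k\geq 3$.

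For the set-partition embedding with $m=ab$, $a,b\geq 2$, I will show $f_2(X)\geq 3$, and since $\AAA_m$-orbits refine $\SSS_m$-orbits on pairs it suffices to exhibit three distinct $\SSS_m$-orbits on pairs $\{P_0,P\}$ with $P\neq P_0$. Such orbits are classified by the $b\times b$ intersection matrix $(|B_i\cap B'_j|)$, where the $B_i$ are the blocks of $P_0$ and the $B'_j$ those of $P$, modulo simultaneous permutations of rows and columns. For $b\geq 4$ I will use the coarse invariant ``number of blocks of $P_0$ that are also blocks of $P$''; this count cannot equal $b-1$ (preserving $b-1$ blocks forces the last), but it does realize each of the values $0$, $1$, and $2$ on $\Omega\setminus\{P_0\}$, obtained by preserving $0$, $1$, or $2$ of the $B_i$ and remixing the remaining blocks so that no further coincidences occur (such remixings exist whenever $a\geq 2$). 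For $b=2$, where $m=2a\geq 12$ forces $a\geq 6$, a partition is determined by an $a$-subset $A\subset\Delta$ via $P=\{A,\Delta\setminus A\}$, and the orbit invariant $\min(|A\cap A_0|,a-|A\cap A_0|)$ takes $\lfloor a/2\rfloor\geq 3$ distinct values on $\Omega\setminus\{P_0\}$.

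The main obstacle is the case $b=3$, where the preserved-blocks invariant only distinguishes the two values $\{0,1\}$. Here $m\geq 11$ forces $a\geq 4$, and the plan is to refine within preserved count $1$: if one of the $B_i$ is preserved, the remaining blocks of $P$ determine a $2\times 2$ intersection matrix with row and column sums equal to $a$ and no entry equal to $a$. Such matrices are classified, up to row and column swap, by a single parameter in $\{1,\ldots,\lfloor a/2\rfloor\}$, which for $a\geq 4$ produces at least two inequivalent orbits. Combining these with any orbit of preserved count $0$ (which certainly exists for $a\geq 2$) yields at least three $\SSS_m$-orbits, so $f_2(X)\geq 3$ and hence $e_2(X)\geq 2$.
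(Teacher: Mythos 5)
Your proposal is correct and follows essentially the same route as the paper: apply Lemma~\ref{LFSubsets} for the $k$-subsets embedding, and for the partition embedding exhibit at least three $\SSS_m$-orbits on pairs by first using the number of common blocks (giving $j\in\{0,1,\dots,b-2\}$, enough for $b\ge 4$) and then, for $b=3$ and $b=2$, refining by the intersection sizes of non-shared blocks. Your observation that $\AAA_m$-orbits refine $\SSS_m$-orbits is the same mechanism the paper uses implicitly, and the $b=3$ refinement via the $2\times 2$ intersection matrix is just a repackaging of the paper's distinction between $|P_1\cap Q_1|=1$ and $|P_1\cap Q_1|=2$.
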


\begin{proof}
Recall that $f_1(X) = 1$ for the special embeddings of $X$ into $\SSS_n$ in question.
Now for the action of $X$ on $k$-subsets of $\Delta$ the result follows from Lemma~\ref{LFSubsets}.

Now let $X$ act on partitions $P = \{P_1, \ldots ,P_b\}$ of $\Delta$ into
$b$ $a$-subsets $P_1, \ldots ,P_b$. We will exhibit at least $3$ orbits of
$X$ on pairs of partitions $\{P,Q\}$. Note that $\Delta$ admits 
two partitions with no common subset between them. It follows that, for
each $j = 0,1, \ldots,b-2$, $\Delta$ admits a pair of partitions 
$\{P,Q\}$, where $P$ and $Q$ contain exactly $j$ common subsets. Certainly,
such pairs with different parameters $j$ belong to different $\SSS_m$-orbits.
In particular, we are done if $b \geq 4$.

Suppose $b = 3$ and $a \geq 4$. Then we get at least one orbit with 
the above parameter $j = 0$. For $j = 1$, we get at least two orbits 
with representatives $\{P,Q\}$, where 
$P = \{P_1,P_2,R\}$, $Q = \{Q_1,Q_2,R\}$, and $|P_1 \cap Q_1| = 1$,
respectively $|P_1 \cap Q_1| = 2$. 

Suppose $b = 2$ and $a \geq 6$. Then for each $s = 1,2,3$ we get at least one
orbit with representatives $\{P,Q\}$, where 
$P = \{P_1,P_2\}$, $Q = \{Q_1,Q_2\}$, and $|P_1 \cap Q_1| = s$. 
\end{proof}

\begin{remark} 
{\rm 
It is easy to check that for the embedding via the action of $X$ on $b$ $a$-subsets, we have $e_2(X) = 0$ if $(a,b) = (3,2)$, and $e_2 = 1$ if 
$(a,b) = (2,3)$, $(4,2)$, or $(5,2)$. On the other hand, 
$e_2(X) = 3$ if $(a,b) = (3,3)$, as one can compute using (\ref{fixed}) below and 
\cite{GAP}.    
}
\end{remark}

\begin{lemma}\label{e31}
Suppose $m \geq 6$ and $X = \AAA_m$ embeds into $\SSS_n$ via its action on
$k$-subsets of $\Delta$ for $2 \leq k < m/2$. Then either $e_3(X) \geq 4$,
or $k = 2$ and $e_3(X) = 3$.  
\end{lemma}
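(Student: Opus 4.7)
The plan is to translate $e_3(X) = f_3(X) - f_2(X)$ into a counting problem. By Lemma~\ref{LFSubsets} we have $f_2(X) = k$, so the task is to bound $f_3(X)$, the number of $\AAA_m$-orbits on unordered triples $\{A,B,C\}$ of distinct $k$-subsets of $\Delta$. Since every $\SSS_m$-orbit is a union of at most two $\AAA_m$-orbits, it suffices to bound $\SSS_m$-orbits from below. These are classified by the intersection pattern $(a_{123};a_{12},a_{13},a_{23})$ (up to permutation of the pairwise entries), subject to $a_{123}\leq a_{ij}\leq k-1$ for distinctness, $a_{ij}+a_{ik}-a_{123}\leq k$ for each $i$ (since $|A_i|=k$), and $3k-a_{12}-a_{13}-a_{23}+a_{123}\leq m$ for realizability in $\Delta$.

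For $k=2$ (so $a_{ij}\in\{0,1\}$), an exhaustive enumeration yields exactly the five patterns $(0;0,0,0)$, $(0;1,0,0)$, $(0;1,1,0)$, $(0;1,1,1)$, $(1;1,1,1)$, all realizable when $m\geq 6$. These correspond precisely to the five orbit representatives already listed in the proof of Corollary~\ref{pairs2} (three disjoint pairs, one disjoint pair together with a sharing pair, a path, a triangle, and a star). A direct check shows that the $\SSS_m$-stabilizer of each representative contains an even non-identity element (a 3-cycle or double transposition) when $m\geq 6$, so the five $\SSS_m$-orbits coincide with $\AAA_m$-orbits, giving $f_3(X)=5$ and $e_3(X)=3$.

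For $k\geq 3$ I exhibit $k+4$ distinct valid patterns, all realizable for $m\geq 2k+1$. The principal family is $(t;k-1,t+1,t)$ for $t=0,1,\ldots,k-2$, producing $k-1$ patterns each with $|A\cup B\cup C|=2k-t\leq 2k$; the structural constraints follow from $t\leq k-2$. To these I adjoin the five further patterns $(0;k-1,0,0)$, $(0;k-2,1,0)$, $(0;k-2,1,1)$, $(1;k-1,1,1)$, and $(1;k-1,2,2)$; each has union of size at most $2k+1$ and satisfies all structural constraints (the hypothesis $k\geq 3$ is needed for the last two to have $a_{ij}\leq k-1$). Distinctness of the $k+4$ patterns is immediate from inspection of the multiset $\{a_{12},a_{13},a_{23}\}$ together with the value $a_{123}$. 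The principal obstacle is uniform realizability at the smallest admissible value $m=2k+1$; this is guaranteed because each listed pattern has $|A\cup B\cup C|\leq 2k+1\leq m$ by construction, so no pattern is lost as $m$ ranges through its admissible values.
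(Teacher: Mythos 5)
Your approach is close in spirit to the paper's (both lower-bound $f_3$ by counting $\SSS_m$-orbits of triples via their intersection patterns and then use $f_3(\AAA_m)\geq f_3(\SSS_m)$ and $f_2(\AAA_m)=k$), but the execution differs usefully. The paper builds explicit triples $\{A,B,C\}$ of $k$-subsets and distinguishes them by the sorted triple of pairwise intersection sizes, patching in $|A\cup B\cup C|$ as a tie-breaker in the $k=2$ case; it produces $4k-5$ orbits for $k\geq 3$. You instead observe that since all three sets have size $k$, the full quadruple $(a_{123};a_{12},a_{13},a_{23})$ (modulo permuting the pairwise entries) determines all Venn-region sizes and hence is a \emph{complete} $\SSS_m$-orbit invariant; you then list $k+4$ admissible patterns and verify realizability via $|A\cup B\cup C|\leq 2k+1\leq m$. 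Your use of the complete invariant has two advantages: it makes distinctness of listed orbits transparent (no tie-breaking needed), and for $k=2$ it enables the exhaustive enumeration giving exactly five $\SSS_m$-orbits. The $k+4$ families you chose are indeed pairwise distinct and admissible for $k\geq 3$, so the $k\geq 3$ part of the argument is sound.

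There is one genuine error, though it is harmless to the statement being proved. In the $k=2$ case you assert that each $\SSS_m$-orbit coincides with an $\AAA_m$-orbit because ``the $\SSS_m$-stabilizer of each representative contains an even non-identity element.'' That is the wrong criterion: an $\SSS_m$-orbit fails to split into two $\AAA_m$-orbits precisely when the $\SSS_m$-stabilizer contains an \emph{odd} permutation (equivalently, is not contained in $\AAA_m$). The presence of an even non-identity element tells you nothing. (The orbits do in fact not split for $m\geq 6$ --- each representative is fixed by some transposition, either one of the $\{i,j\}$'s or a transposition on two points outside the union --- but your stated reason doesn't establish it.) This flaw only affects the claim $e_3(X)=3$ exactly; the lemma's disjunctive conclusion requires only $e_3(X)\geq 3$ when $k=2$, and that already follows from your exhaustive enumeration giving $f_3(\SSS_m)=5$ together with $f_3(\AAA_m)\geq f_3(\SSS_m)$. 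So the lemma is proved, but you should correct ``even'' to ``odd'' (or simply drop the exact-value claim, which is not needed).
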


\begin{proof}
By Lemma \ref{LFSubsets}, we have  $f_2(X) = k$.
So we will try to exhibit at least $(k+4)$ $\SSS_m$-orbits on triples 
$\{A,B,C\}$ of $k$-subsets. We may assume that 
$$|A \cap B| \geq |A \cap C| \geq |B \cap C|$$ 
and call $(|A \cap B|,|A \cap C|,|B \cap C|)$ the {\it mark} of the triple
$\{A,B,C\}$. Certainly, triples with different marks belong to different
$\SSS_m$-orbits. 

Recall that $m \geq 2k+1$. 
First let $|A \cap B| = k-1$, so we may assume 
$$A = \{1,2, \ldots ,k-1,k\},~~B = \{1,2, \ldots ,k-1,k+1\}.$$
For $0 \leq j \leq k-1$, by choosing 
$$C = \{1,2, \ldots ,j,k+2,k+3, \ldots ,2k+1-j\}$$
we get a triple with the mark $(k-1,j,j)$. Similarly, for $1 \leq j \leq k-1$, 
by choosing 
$$C = \{1,2, \ldots ,j-1,k,k+2,k+3, \ldots ,2k+1-j\}$$
we get a triple with the mark $(k-1,j,j-1)$. 

Next we consider the case $|A \cap B| = k-2$, say 
$$A = \{1,2, \ldots ,k-2,k-1,k\},~~B = \{1,2, \ldots ,k-2,k+1,k+2\}.$$
For $1 \leq j \leq k-2$, by choosing 
$$C = \{1,2, \ldots ,j,k+3,k+4, \ldots ,2k+2-j\}$$
we get a triple with the mark $(k-2,j,j)$. Similarly, for $1 \leq j \leq k-2$, 
by choosing 
$$C = \{1,2, \ldots ,j-1,k,k+2,k+3, \ldots ,2k+2-j\}$$
we get a triple with the mark $(k-2,j,j-1)$. 

We have produced at least $4k-5$ different marks. So we have 
$e_3(X) \geq 3k-5 \geq 4$ if $k \geq 3$. 

Finally, consider the case $k = 2$. Then the triples 
$$\{\{1,2\},\{1,3\},\{2,4\}\},\ \{\{1,2\},\{1,3\},\{4,5\}\},\ \{\{1,2\},\{3,4\},\{5,6\}\}$$ 
have marks
$(1,1,0)$, $(1,0,0)$, and $(0,0,0)$. Furthermore, the triples 
$\{\{1,2\},\{1,3\},\{1,4\}\}$ and $\{\{1,2\},\{1,3\},\{2,3\}\}$ have the same mark $(1,1,1)$, but 
different cardinality of $A \cup B \cup C$, so they produce two more
$\SSS_m$-orbits.  
\end{proof}

To estimate $e_3(X)$ for the second special embedding of $X$ into $\SSS_n$, we need the 
following observation:

\begin{lemma}\label{easy}
Let $Y$ be any group and $\K$ any field. Suppose that $A_1$, $A_2$, $B_1$,
$B_2$ are $\K Y$-modules such that there is an injective 
$f \in \Hom_{\K Y}(A,B)$ with $A = A_1 \oplus A_2$, $B = B_1 \oplus B_2$, and 
$f(A_2) \subseteq B_2$. Then
$$\dim B^Y - \dim A^Y \geq \dim B_1^Y - \dim A_1^Y.$$ 
\end{lemma}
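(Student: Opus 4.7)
The plan is short and entirely elementary, because the hypotheses allow the problem to be reduced immediately to a triviality.

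First, since $A = A_1 \oplus A_2$ as $\K Y$-modules, the $Y$-fixed-point functor (which preserves finite direct sums) gives $A^Y = A_1^Y \oplus A_2^Y$, and similarly $B^Y = B_1^Y \oplus B_2^Y$. Substituting these into the desired inequality, both $\dim B_1^Y$ and $\dim A_1^Y$ cancel, and the claim reduces to
\[
\dim A_2^Y \leq \dim B_2^Y.
\]

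Next, I would observe that the hypothesis $f(A_2) \subseteq B_2$ means $f$ restricts to a well-defined $\K Y$-module homomorphism $f|_{A_2} : A_2 \to B_2$. Because $f$ itself is injective, so is this restriction. Taking $Y$-fixed points is a left exact functor, so $f|_{A_2}$ induces an injection $A_2^Y \hookrightarrow B_2^Y$ of $\K$-vector spaces, yielding the required dimension inequality.

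There is really no serious obstacle here; the lemma is essentially a bookkeeping statement, and the only ingredient beyond trivial linear algebra is the compatibility of the fixed-point functor with direct sums and with monomorphisms, both of which are immediate. The result will presumably be applied later with $A = M_r^X$-type submodules and $B$ a larger permutation or Specht module, where the decomposition $B = B_1 \oplus B_2$ arises naturally from a splitting (such as the splitting of $M_2$ or $M_3$ off the $\F\cdot T_r$ summand in suitable parity of $n$); the usefulness of the lemma lies in combining it with the explicit computations of fixed point dimensions in \S5.
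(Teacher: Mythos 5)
Your proof is correct and matches the paper's argument: both reduce via $A^Y = A_1^Y \oplus A_2^Y$ and $B^Y = B_1^Y \oplus B_2^Y$, and then observe that $f$ injects $A_2^Y$ into $B_2^Y$.
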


\begin{proof}
Clearly, $A^Y = A_1^Y \oplus A_2^Y$ and $B^Y = B_1^Y \oplus B_2^Y$. Now 
$f$ embeds $A_2^Y$ in $B_2^Y$, whence the claim.
\end{proof}

The following statement is also well known and follows for example from the 
formula for $\rank_\K\eta_{2,3}$ given in \cite{Wilson}:

\begin{lemma}\label{23}
Let $p\neq 2,3$ and $n \geq 4$. Then 
$\eta_{2,3}:M_2 \to M_3$  is injective.
$\hfill \Box$  
\end{lemma}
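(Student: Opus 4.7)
The plan is to compute $\rank_{\F} \eta_{2,3}$ directly from the general rank formula of Wilson \cite{Wilson}, whose $p=2$ specialization is recorded in Lemma~\ref{wilson}. Wilson's formula expresses $\rank_{\F} \eta_{r,s}$, for $r \leq \min\{s, n-s\}$, as the sum of terms $\binom{n}{i} - \binom{n}{i-1}$ taken over those indices $0 \leq i \leq r$ for which $\binom{s-i}{r-i}$ is nonzero modulo $p$ (with the convention $\binom{n}{-1} := 0$). In the $p = 2$ specialization, the contribution from $i = 0$ accounts precisely for the $+1$ appearing in the stated value $\rank_{\F} \eta_{2,3} = 1 + n(n-3)/2$.

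Specializing the formula to $(r,s) = (2,3)$, the three binomial coefficients to test are $\binom{3}{2} = 3$ at $i = 0$, $\binom{2}{1} = 2$ at $i = 1$, and $\binom{1}{0} = 1$ at $i = 2$. Under the hypothesis $p \neq 2, 3$ none of these vanishes modulo $p$, so every index contributes, and the total rank equals
\[
1 + (n-1) + \left(\binom{n}{2} - n\right) = \binom{n}{2} = \dim M_2.
\]
Since $\rank_{\F} \eta_{2,3}$ matches the dimension of the source, the map $\eta_{2,3}$ is injective, as claimed.

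There is essentially no obstacle; the proof reduces to a short arithmetic check of three small binomial coefficients modulo $p$. Should one prefer to avoid invoking \cite{Wilson}, a self-contained alternative is to compute the composition $\eta_{3,2} \circ \eta_{2,3}$ on a basis element, obtaining the explicit endomorphism
\[
\{i,j\} \;\longmapsto\; (n-4)\{i,j\} + \eta_{1,2}(\{i\}) + \eta_{1,2}(\{j\})
\]
of $M_2$, and then verifying its nonsingularity by checking that it acts as a nonzero scalar on each ordinary isotypic component of $M_2$ under the hypothesis $p \neq 2,3$. However, Wilson's formula yields the result uniformly for all $n \geq 4$ in a single line and is therefore preferable.
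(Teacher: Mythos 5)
Your primary argument---computing $\rank_\F\eta_{2,3}$ from Wilson's formula, observing that for $p\neq 2,3$ all three binomial coefficients $\binom{3}{2},\binom{2}{1},\binom{1}{0}$ are units, and summing the telescoping contributions to get $\binom{n}{2}=\dim M_2$---is exactly what the paper intends by its one-line pointer to \cite{Wilson}; the arithmetic is correct, and you rightly note that the $i=0$ term is what produces the $+1$ in the $p=2$ case (the paper's display of Lemma~\ref{wilson} has an off-by-one typo in the index range, which you silently and correctly repaired). One caveat that both you and the paper glide past: Wilson's rank formula requires $r\leq\min\{s,n-s\}$, i.e.\ $n\geq 5$ here, and for $n=4$ the statement is simply false since $\dim M_2=6>4=\dim M_3$; so the lemma's hypothesis should read $n\geq 5$.

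Your proposed ``self-contained alternative'' does not work, however. The identity $\eta_{3,2}\circ\eta_{2,3}=(n-4)\cdot\mathrm{id}_{M_2}+\eta_{1,2}\circ\eta_{2,1}$ that you derive is correct, and on the three ordinary isotypic components $\chi^{(n)},\chi^{(n-1,1)},\chi^{(n-2,2)}$ of $M_2$ the composite acts by the scalars $3(n-2)$, $2(n-3)$, and $n-4$ respectively. The hypothesis $p\neq 2,3$ does \emph{not} force these to be nonzero mod $p$: for instance $p=5$, $n=9$ kills the eigenvalue $n-4$ on $S^{(n-2,2)}$, so $\eta_{3,2}\circ\eta_{2,3}$ is singular even though $\eta_{2,3}$ is injective by Wilson's formula. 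The issue is that over $\F$ the kernel of the Gram-type composite $\eta_{2,3}^*\eta_{2,3}$ can be strictly larger than $\Ker\eta_{2,3}$ (the radical of the induced form on $\image\eta_{2,3}$ need not vanish), so nonsingularity of the composite is sufficient but not necessary for injectivity. If you want a self-contained argument avoiding \cite{Wilson}, you would instead need to track the image of each Specht filtration layer of $M_2$ inside $M_3$ directly.
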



Now we can prove a reduction lemma to help estimate $e_3(X)$ for the second 
special embedding. 

\begin{lemma}\label{e32}
Let $X = \AAA_m$ and $Y := \SSS_m$ embed into $\SSS_n$ via their actions on 
partitions of $\Delta = \{1,2, \ldots ,m\}$ into $b$ $a$-subsets, 
with $a,b \geq 2$. Suppose that 
$b > s \geq 2$. Set $n' := (sa)!/((a!)^s\cdot s!)$ and let 
$Z := \SSS_{sa}$ embed in $\SSS_{n'}$ via its action on partitions of 
$\Delta' = \{1,2, \ldots ,sa\}$ into $s$ $a$-subsets. Also denote by 
$N_r$ the permutation $\SSS_{n'}$-module corresponding to its action 
on $r$-subsets of $\{1,2, \ldots ,n'\}$. Then
$$\dim M_3^X - \dim M_2^X \geq \dim N_3^Z - \dim N_2^Z.$$ 
\end{lemma}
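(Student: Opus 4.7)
Because $\dim M_r^X$ and $\dim N_r^Z$ each equal the number of orbits of the respective group on the corresponding collection of $r$-subsets, the asserted inequality is independent of $\Char(\F)$, so I will work over $\Q$, where by Lemma~\ref{23} the incidence map $\eta_{2,3}:M_2 \to M_3$ is injective. The hypothesis $b > s \geq 2$ lets me fix a set partition $\pi_0$ of $\Delta \setminus \Delta' = \{sa+1,\dots,m\}$ into $b-s$ blocks of size $a$, and thereby an injection $\iota:\Omega' \hookrightarrow \Omega$ defined by $\iota(\pi') := \pi' \cup \pi_0$. Write $W := \iota(\Omega')$, let $M_r^{\mathrm{in}}$ denote the $\Q$-span of the $r$-subsets of $W$, and $M_r^{\mathrm{out}}$ the span of the remaining $r$-subsets of $\Omega$. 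The setwise stabiliser $X_1 := \Stab_X(W)$ equals $(\SSS_{sa} \times (\SSS_a \wr \SSS_{b-s})) \cap X$, and since $\SSS_a \wr \SSS_{b-s}$ contains odd permutations the projection $X_1 \twoheadrightarrow Z = \SSS_{sa}$ is surjective. Hence $X_1$-orbits on the $r$-subsets of $W$ correspond bijectively to $Z$-orbits on the $r$-subsets of $\Omega'$, and $\dim (M_r^{\mathrm{in}})^{X_1} = \dim N_r^Z$.

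Next, I apply Lemma~\ref{easy} with $Y = X_1$, $A = M_2$, $B = M_3$, $f = \eta_{2,3}$, and the decompositions $A = M_2^{\mathrm{in}} \oplus M_2^{\mathrm{out}}$, $B = M_3^{\mathrm{in}} \oplus M_3^{\mathrm{out}}$. All four summands are $X_1$-submodules because $X_1$ stabilises $W$, and the containment $\eta_{2,3}(M_2^{\mathrm{out}}) \subseteq M_3^{\mathrm{out}}$ is immediate: any basis element $\{x,y\} \in M_2^{\mathrm{out}}$ has, say, $y \notin W$, and each $\{x,y,z\}$ appearing in $\eta_{2,3}(\{x,y\})$ still contains $y$. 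The lemma therefore yields the intermediate bound
\[
\dim M_3^{X_1} - \dim M_2^{X_1} \;\geq\; \dim N_3^Z - \dim N_2^Z.
\]

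The main obstacle is then to replace the $X_1$-invariants by $X$-invariants on the left-hand side. Since $X_1 \leq X$, the loss $\dim M_r^{X_1} - \dim M_r^X = \sum_{\mathcal{O}}(k_r(\mathcal{O}) - 1)$, where $\mathcal{O}$ ranges over the $X$-orbits on the $r$-subsets of $\Omega$ and $k_r(\mathcal{O})$ counts the $X_1$-orbits inside $\mathcal{O}$, is nonnegative, and what one needs is the monotonicity statement that this loss is at least as large for $r=2$ as for $r=3$. I would prove this by constructing, for each ``fusion'' of two $X_1$-orbits on the $3$-subsets under some $g \in X \setminus X_1$, a corresponding fusion on the $2$-subsets obtained by restricting to a suitably chosen $2$-subset of the original $3$-subset; the strict inequality $b > s$ provides enough room inside $W$ to guarantee that the chosen $2$-subsets land in genuinely distinct $X_1$-orbits, and that the resulting assignment from ``$3$-level'' to ``$2$-level'' fusions is injective, which would complete the proof.
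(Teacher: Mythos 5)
Your first two steps are correct: the stabiliser identification $X_1=\Stab_X(W)=(\SSS_{sa}\times(\SSS_a\wr\SSS_{b-s}))\cap X$, the surjectivity of the projection $X_1\twoheadrightarrow Z$ (which uses $a\geq 2$ to supply odd permutations in the wreath factor), the equality $\dim(M_r^{\mathrm{in}})^{X_1}=\dim N_r^Z$, and the application of Lemma~\ref{easy} with $f=\eta_{2,3}$ and the $X_1$-stable decomposition $M_r=M_r^{\mathrm{in}}\oplus M_r^{\mathrm{out}}$ are all sound and do give
\[
\dim M_3^{X_1}-\dim M_2^{X_1}\;\geq\;\dim N_3^Z-\dim N_2^Z.
\]

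The fatal gap is the final ``monotonicity'' claim, namely
\[
\dim M_2^{X_1}-\dim M_2^{X}\;\geq\;\dim M_3^{X_1}-\dim M_3^{X},
\]
which you sketch but do not prove. It is not merely unproven; it is almost certainly false in the relevant situations. Passing from $X_1$-orbits to $X$-orbits, the loss $\sum_{\mathcal O}(k_r(\mathcal O)-1)$ scales roughly with the total number of $X_1$-orbits at level $r$, which is on the order of $\binom{n}{r}/|X_1|$; since $\binom{n}{3}\gg\binom{n}{2}$, the loss at level $3$ should dominate the loss at level $2$, not the other way around. (As an extreme check, take $X_1$ trivial: the loss at level $3$ is $\binom{n}{3}-\dim M_3^X$, far larger than $\binom{n}{2}-\dim M_2^X$.) Your heuristic of ``restricting each $3$-level fusion to a $2$-subset'' cannot compensate for this order-of-magnitude discrepancy, and there is no canonical choice of $2$-subset that would make the assignment well-defined, let alone injective. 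The source of the trouble is that the decomposition $M_r^{\mathrm{in}}\oplus M_r^{\mathrm{out}}$ is only $X_1$-stable, not $X$-stable, so Lemma~\ref{easy} only reaches the wrong group.

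The paper circumvents this by constructing an $X$-stable (indeed $Y=\SSS_m$-stable) decomposition directly: it declares a pair $\{P,Q\}\in\Omega_2$ \emph{good} if $P$ and $Q$ share at least $b-s$ common $a$-blocks, and similarly defines good and very good triples. Setting $A_1=\C\Omega_{21}$ (good pairs), $A_2=\C\Omega_{22}$, $B_1=\C\Omega_{31}$ (good triples), $B_2=\C\Omega_{32}$, one has $\C Y$-decompositions $M_2=A_1\oplus A_2$ and $M_3=B_1\oplus B_2$, and the point that $\eta_{2,3}$ of a non-good pair involves only non-good triples gives $\eta_{2,3}(A_2)\subseteq B_2$. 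Lemma~\ref{easy} then applies with the group $X$ itself, yielding $\dim M_3^X-\dim M_2^X\geq\dim B_1^X-\dim A_1^X\geq\dim B_{11}^X-\dim A_1^X$, where $B_{11}$ is the span of very good triples. Finally, the hypothesis $b>s$ ensures every good pair (resp.\ very good triple) is fixed by some transposition, so $X$- and $Y$-orbits coincide there, and a direct bijection identifies these orbit counts with $\dim N_3^Z$ and $\dim N_2^Z$. If you want to salvage your approach, replacing the subset $W$ by the intrinsic ``good'' condition is exactly the fix.
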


\begin{proof}
1) Since $M_r$ is a permutation module, $\dim M_r^X$ remains unchanged when
we replace $\F$ by any other field. So throughout this proof we may 
assume $\F = \C$.

With respect to $X$ and $Y$, any pair $\{P,Q\}$ in $\Omega_2$ is a pair of 
two partitions
$$P = \{P_1, \ldots ,P_a\},~~Q = \{Q_1, \ldots ,Q_a\}.$$
Call $\{P,Q\}$ a {\it good pair} if at least $b-s$ subsets $P_i$ 
occur among the $Q_j$. Next, we call a triple 
$\{P,Q,R\} \in \Omega_3$ with $R = \{R_1, \ldots ,R_a\}$ 
a {\it good triple} if all three pairs $\{P,Q\}$, $\{P,R\}$, 
$\{Q,R\}$ are good. We also call $\{P,Q,R\}$ a {\it very good triple} if there 
are at least $b-s$ subsets $P_i$ which occur both among the $Q_j$ and among the 
$R_j$. Then $Y > X$ acts on the following sets: $\Omega_{21}$ of all good pairs, 
$\Omega_{22} := \Omega_2 \setminus \Omega_{21}$, $\Omega_{31}$ of all good
triples, and $\Omega_{32} :=  \Omega_3 \setminus \Omega_{31}$. Thus 
$$A_1 := \C\Omega_{21},~~A_2 := \C\Omega_{22},~~
  B_1 := \C\Omega_{31},~~B_2 := \C\Omega_{32}$$
are $\C Y$-submodules of $M_2 = A_1 \oplus A_2$ and $M_3 = B_1 \oplus B_2$. 
Note that $Y > X$ also acts on the set $\Omega_{311}$ of {\it very good} triples,
and so $B_{11} := \C\Omega_{311}$ is an $X$-submodule of $B_1$. Certainly,
$B_{11}^X \subseteq B_1^X$.

Since $\Char(\C) = 0$, $\eta_{2,3}$ is injective by Lemma \ref{23}. Next,
if $\{P,Q\} \in \Omega_{22}$, then
$$\eta_{2,3}(\{P,Q\}) = \sum_{R \neq P,Q}\{P,Q,R\},$$
where all occuring triples $\{P,Q,R\}$ are not good (since $\{P,Q\}$ is not
good). Thus $\eta_{2,3}(\{P,Q\}) \in B_2$. We have shown that 
$\eta_{2,3}(A_2) \subseteq B_2$. Applying Lemma \ref{easy} to the homomorphism
$f = \eta_{2,3}$, we see that
$$\dim M_3^X - \dim M_2^X \geq \dim B_1^X - \dim A_1^X \geq 
  \dim B_{11}^X - \dim A_1^X.$$

Recall that $b > s$. Hence, for any good pair $\{P,Q\}$, $P$ and $Q$ have at 
least one common $a$-subset $P_1$, and since $a \geq 2$, some 
transposition $(i,j) \in Y \setminus X$ fixes both $P$ and $Q$. It follows that 
$X$ and $Y$ have the same orbits on good pairs. Similarly, $X$ and $Y$ have 
the same orbits on {\it very good} triples. Thus 
$$\dim B_{11}^X - \dim A_1^X = \dim B_{11}^Y - \dim A_1^Y.$$
  
\smallskip
2) It remains to prove that 
\begin{equation}\label{dim23}
  \dim B_{11}^Y - \dim A_1^Y = \dim N_3^Z - \dim N_2^Z.
\end{equation}
To do so, we will count the $X$-orbits on good pairs and very good triples.

Let $r \in \{2,3\}$. 
Suppose the good pairs (if $r=2$), respectively the very good triples (if
$r = 3$), $\{P^1, \ldots ,P^r\}$ and $\{Q^1, \ldots ,Q^r\}$ belong to the same 
$X$-orbit. Then, without loss we may assume that
\begin{equation}\label{pq}
  P^i = \{R_1, \ldots ,R_{b-s},P^i_1, \ldots ,P^i_s\},~~
  Q^i = \{R_1, \ldots ,R_{b-s},Q^i_1, \ldots ,Q^i_s\}
\end{equation}
for $1 \leq i \leq r$. Denote 
$$\bar{P}^i := \{P^i_1, \ldots ,P^i_s\},~~
  \bar{Q}^i := \{Q^i_1, \ldots ,Q^i_s\}$$
for $1 \leq i \leq r$.

By assumption, there is some $\sigma \in \SSS_m$ sending 
$\{P^1, \ldots ,P^r\}$ to $\{Q^1, \ldots ,Q^r\}$.
Let $t$ be the number of common $a$-subsets that occur in all 
$P^1, \ldots ,P^r$. Then $t$ is also the number of common $a$-subsets that 
occur in all $Q^1, \ldots ,Q^r$, and $t \geq b-s$.

Consider the case $t = b-s$. Then the $t$ common $a$-subsets among all
$P^i$ are precisely $R_1, \ldots ,R_{b-s}$, and similarly, the $t$ common 
$a$-subsets among all $Q^i$ are precisely $R_1, \ldots ,R_{b-s}$. It follows that
$\sigma$ acts on the set $\{R_1, \ldots ,R_{b-s}\}$, and preserves the set
$$\cup^s_{j=1}P^i_j = \cup^s_{j=1}Q^i_j$$
which can be identified with $\Delta'$. Now we can write 
$\sigma = \mu\tau$, where 
$$\mu \in \SSS_{m-sa} = \Sym(\cup^{b-s}_{j=1}R_j)$$
acts on the set $\{R_1, \ldots ,R_{b-s}\}$, and $\tau \in Z = \Sym(\Delta')$ sends
$\{\bar{P}^1, \ldots ,\bar{P}^r\}$ to $\{\bar{Q}^1, \ldots ,\bar{Q}^r\}$. 
Thus, the two pairs, respectively triples, 
$\{\bar{P}^1, \ldots ,\bar{P}^r\}$ and $\{\bar{Q}^1, \ldots ,\bar{Q}^r\}$ of 
partitions of $\Delta'$ belong to the same $Z$-orbit.

Next assume that $t > b-s$ and set $v = t+s-b$.
Then the $t$ common $a$-subsets among all
$P^i$ are precisely $R_1, \ldots ,R_{b-s},S_1, \ldots ,S_v$, and similarly, 
the $t$ common $a$-subsets among all $Q^i$ are precisely 
$R_1, \ldots ,R_{b-s},T_1, \ldots ,T_v$, for some
$a$-subsets $S_j$ and $T_j$ of $\Delta$. It follows that
$\sigma$ sends $\{R_1, \ldots ,R_{b-s},S_1, \ldots ,S_v\}$ to  
$\{R_1, \ldots ,R_{b-s},T_1, \ldots ,T_v\}$,
and $\Sigma$ to $\Theta$, where 
$$\Sigma = \Delta \setminus (\bigcup^{b-s}_{j=1}R_j \cup \bigcup^v_{j=1}S_j),~~
  \Theta = \Delta \setminus (\bigcup^{b-s}_{j=1}R_j \cup \bigcup^v_{j=1}T_j).$$
Set $R'_j := R_j$ for $1 \leq j \leq b-s$. Also set $R_{b-s+j} := S_j$ and 
$R'_{b-s+j} := T_j$ for $1 \leq j \leq v$.
Then there is a permutation $\pi \in \SSS_t$ such that 
$\sigma(R_j)=R'_{\pi(j)}$. Now we can find $\gamma \in \SSS_m = \Sym(\Delta)$
such that   
$$\gamma_\Sigma = 1_\Sigma,~~\gamma(R_j) = R_{\pi^{-1}(j)}.$$
Clearly, $\sigma\gamma$ sends $R_j$ to $R_j$ for 
$1 \leq j \leq b-s$ (and $S_j$ to $T_j$), and sends $\{P^1, \ldots ,P^r\}$ to 
$\{Q^1, \ldots ,Q^r\}$. Now we can repeat the argument of the preceding case 
$t = b-s$ to show that the two pairs, respectively triples, 
$\{\bar{P}^1, \ldots ,\bar{P}^r\}$ and $\{\bar{Q}^1, \ldots ,\bar{Q}^r\}$ of 
partitions of $\Delta'$ belong to the same $Z$-orbit.   
 
Conversely, it is obvious that if $\{\bar{P}^1, \ldots ,\bar{P}^r\}$ and 
$\{\bar{Q}^1, \ldots ,\bar{Q}^r\}$ belong to the same $Z$-orbit, then 
$\{P^1, \ldots ,P^r\}$ and $\{Q^1, \ldots ,Q^r\}$, defined as in (\ref{pq}), 
belong to the same $Y$-orbit. We have therefore proved 
that $\dim B_{11}^Y = \dim N_3^Z$ and $\dim A_1^Y = \dim N_2^Z$, whence 
(\ref{dim23}) holds.   
\end{proof}

\begin{theorem}\label{bound} 
Let $p=2$ and $n$ be even. Suppose that $n > m \geq 11$ and $X\cong\AAA_m$ embeds into $\AAA_n$ via its actions on
subsets or partitions of $\{1,2, \ldots ,m\}$.  
Then $f_2(X)\geq 3$ and $e_3(X)\geq h(X)+2$, unless $\AAA_m$ embeds into $\AAA_n$ via its 
action on $2$-subsets of $\{1,2, \ldots ,m\}$.
\end{theorem}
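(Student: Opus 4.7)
We address the two embedding types separately, making heavy use of Lemmas~\ref{h1}, \ref{LFSubsets}, \ref{e2}, \ref{e31}, and \ref{e32}.

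\emph{Subset embedding.} For $k \geq 3$, Lemma~\ref{LFSubsets} gives $f_2(X) = k \geq 3$ and Lemma~\ref{e31} gives $e_3(X) \geq 4$, while Lemma~\ref{h1}(ii)(a) gives $h(X) \leq 2$ (we are not in the exceptional $(m/2, m/2)$-partition case). Hence $e_3(X) \geq h(X) + 2$, as required.

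\emph{Partition embedding.} Write $m = ab$ with $a, b \geq 2$ and $m \geq 11$. Transitivity of $X$ gives $f_1(X) = 1$, and Lemma~\ref{e2} gives $e_2(X) \geq 2$, so $f_2(X) \geq 3$. For the second inequality, Lemma~\ref{h1}(ii) shows $h(X) \leq 2$ except when $b = 2$ and $4 \mid m$, in which case $h(X) \leq 3$. If $b \geq 3$, I apply Lemma~\ref{e32} (with $s = 2$ when $a \geq 3$, and with a larger $s < b$ when $a = 2$), reducing the problem to exhibiting at least four orbits of $Z = \SSS_{sa}$ on unordered triples of partitions of $\{1, \ldots, sa\}$ into $s$ parts of size $a$; the bound is established by classifying orbits through the joint-intersection invariant of the constituent subsets and handling the few residual small-parameter instances by ad hoc enumeration. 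If $b = 2$, where $m = 2a \geq 12$, I argue directly: the unordered $X$-orbits on pairs $\{P, Q\}$ of $(a,a)$-partitions are classified by $\{c, a - c\}$ with $c = |P_1 \cap Q_1| \in \{1, \ldots, a-1\}$, giving $f_2(X) = \lfloor a/2 \rfloor$, while unordered $X$-orbits on triples are classified by the joint-intersection array $(|P_i \cap Q_j \cap R_k|)_{i,j,k \in \{1,2\}}$ modulo the natural symmetries (swapping parts within each partition and permuting the three partitions). A parity analysis on $a$ then yields the desired bounds: $e_3(X) \geq 4$ when $a$ is odd, and $e_3(X) \geq 5$ when $a$ is even.

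\emph{Main obstacle.} The most delicate case is $b = 2$ with $a$ even: here the exceptional $h(X) \leq 3$ forces the sharper inequality $e_3(X) \geq 5$. To attain it, one must identify triples $\{P, Q, R\}$ whose $\SSS_m$-stabilizer contains an odd permutation, so that the corresponding $\SSS_m$-orbit on triples splits into two $X$-orbits; this careful comparison between $\SSS_m$- and $\AAA_m$-orbits, rather than a purely $\SSS_m$-orbit count, is the technical heart of the argument.
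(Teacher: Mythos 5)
Your proof follows the paper's overall template: split into subset vs.\ partition embeddings, invoke Lemmas~\ref{LFSubsets}, \ref{e31}, \ref{e2}, \ref{h1}, and reduce the partition case to a small base case via Lemma~\ref{e32}. The subset case is correct and matches the paper. However, in the partition case your choice $s=2$ for $a=3$ breaks down. With $s=2$, $a=3$, the reduction lands on $Z=\SSS_6$ acting on the ten $(3,3)$-partitions of $\{1,\dots,6\}$, an action which is $2$-transitive (it is $PSL_2(9)$ on $\mathbb{P}^1(\F_9)$). Hence $\dim N_2^Z=1$, and a direct enumeration of the possible $2\times 2\times 2$ joint-intersection arrays modulo the symmetry group $(\Z/2)^3\rtimes\SSS_3$ gives $\dim N_3^Z=3$ (the three classes are: joint intersection of size $2$; joint intersection of size $1$ with all pairwise intersections of size $2$; and pairwise intersections with mark $(2,2,1)$). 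Thus $\dim N_3^Z-\dim N_2^Z=2$, well short of the $\geq 4$ required. The paper avoids this by taking $s=3$ when $a=3$, where a GAP computation gives $\dim N_3^Z-\dim N_2^Z=35$. Note also that the quantity to bound after Lemma~\ref{e32} is the \emph{difference} $\dim N_3^Z-\dim N_2^Z$, not merely the number of orbits on triples.

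Second, your ``main obstacle'' paragraph for $b=2$ with $a$ even is both logically reversed and unnecessary. An $\SSS_m$-orbit on triples splits into two $\AAA_m$-orbits precisely when its $\SSS_m$-stabilizer lies \emph{inside} $\AAA_m$, i.e.\ contains \emph{no} odd permutation; you have the implication backwards. More to the point, no such $\AAA_m$-vs.-$\SSS_m$ comparison is needed. The paper simply lists enough distinct $\SSS_m$-marks $(|A\cap B|,|A\cap C|,|B\cap C|)$ (plus one refinement at a repeated mark) to show $\SSS_m$ has at least $t+5$ orbits on triples of $(a,a)$-partitions, where $t=\lfloor a/2\rfloor$ is the number of orbits on pairs. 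Since $f_3(\AAA_m)\geq f_3(\SSS_m)$ always, and $f_2(\AAA_m)=f_2(\SSS_m)$ for $a\geq 3$ because every pair of $(a,a)$-partitions is fixed by some transposition, the bound $e_3(\AAA_m)\geq 5$ follows at once with no splitting analysis. Finally, your assertion that the $\AAA_m$-orbits on triples are classified by the joint-intersection array modulo natural symmetries is in fact the classification of the (possibly coarser) $\SSS_m$-orbits; this is harmless as a lower bound but should not be asserted as a classification for $\AAA_m$.
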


\begin{proof} 
1) The inequality $e_2(X) \geq 2$, and hence $f_2(X)\geq 3$, is proved in Lemma \ref{e2}.

If $\AAA_m$ embeds into $\AAA_n$ via its action of $k$-subsets of 
$\Delta$ with $2 < k < m/2$, then $h(X) \leq 2$ by Lemma \ref{h1}, and 
$e_3(X) \geq 4$ by Lemma \ref{e31}. Thus $e_3(X) \geq h(X)+2$ as stated.

\smallskip
2) From now on, we assume that $\AAA_m$ embeds in $\AAA_n$ via its action on
partitions of $\Delta$ into $b$ $a$-subsets, $a,b \geq 2$. By Lemma 
\ref{h1}, either $h(X) \leq 2$, or $h(X) = 3$ and $b = 2|(m/2)$. 

Here we consider the case $a \geq 5$ and show that $e_3(X) \geq 5 \geq h(X)+2$ 
in this case. Applying Lemma \ref{e32} with $s = 2$, we 
are reduced to prove that $\dim M_3^Y - \dim M_2^Y \geq 5$ for $b=2$ and 
$Y = \SSS_m$. In this base case $b = 2$, each partition is a pair 
$$[A] : = \{A,\Delta \setminus A\}$$ 
with $|A| = a = |\Delta|/2$. Hence the 
$Y$-orbits on pairs of partitions $\{[A],[B]\}$ are labeled by 
$1 \leq j \leq t :=  \lfloor a/2 \rfloor$, where $|A \cap B| = a-j$. 
In particular, $\dim M_2^Y = t$. 

So we need to produce at least
$(t+5)$ $Y$-orbits on triples of partitions $\{[A],[B],[C]\} \in \Omega_3$. As in 
the proof of Lemma \ref{e31}, we will label $A,B,C$ so that
\begin{equation}\label{mark}
  |A \cap B| \geq |A \cap C| \geq |B \cap C| \geq a/2
\end{equation}
and call $(|A \cap B|,|A \cap C|,|B \cap C|)$ the {\it mark} of the triple
$\{A,B,C\}$. Certainly, triples with different marks belong to different
$\SSS_m$-orbits. 

First let $|A \cap B| = a-1$, so we may assume 
$$A = \{1,2, \ldots ,a-1,a\},~~B = \{1,2, \ldots ,a-1,a+1\}.$$
For $1 \leq j \leq t$, by choosing 
$$C = \{1,2, \ldots ,a-j,a+2,a+3, \ldots ,a+j+1\}$$
we get a triple with the mark $(a-1,a-j,a-j)$. In fact, for $j = 1$, we have 
two choices for $C$:
$$\{1,2, \ldots ,a-1,a+2\}, \mbox{ and }\{1,2, \ldots ,a-2,a,a+1\}$$
which lead to two different $\SSS_m$-orbits with mark $(a-1,a-1,a-1)$ for
any $a \geq 4$ (since 
$|A \cap B \cap C| = a-1$ for the first choice and $|A \cap B \cap C| = a-2$
for the second choice). Similarly, for 
$1 \leq j \leq t-1$, by choosing 
$$C = \{1,2, \ldots ,a-j-1,a,a+2,a+3, \ldots ,a+j+1\}$$
we get a triple with the mark $(a-1,a-j,a-j-1)$. 

Suppose in addition that $a \geq 6$. Then we choose 
$$A = \{1,2, \ldots ,a-2,a-1,a\},~~B = \{1,2, \ldots ,a-2,a+1,a+2\}.$$
Taking 
$$C = \{1,2, \ldots ,a-2,a+3,a+4\} \mbox{ or }\{1,2, \ldots ,a-3,a+3,a+4,a+5\}$$
we get triples with the mark $(a-2,a-2,a-2)$ and $(a-2,a-3,a-3)$. We have 
produced at least $(t+1)+(t-1)+2 \geq t+5$ orbits on triples, as desired.

Next assume that $a = 5$, and so $t = 2$. We have already produced $4$ orbits
with marks $(4,4,4)$ (two orbits), $(4,4,3)$, and $(4,3,3)$. 
We can also exhibits $3$ more orbits with 
$$\{A,B,C \} = 
  \{12345,12346,12578\},~\{12345,12367,12389\},~\{12345,12367,12589\}$$
and mark $(4,3,2)$, $(3,3,3)$, and $(3,3,2)$, respectively. (Note that 
the last mark deviates from the convention (\ref{mark}), but it does not 
cause any problem since $a = 5$, as one can check.) 

For the next part of the proof, we also consider the case 
$a = 4$, and so $s = 2$. We have already produced $4$ orbits
with marks $(3,3,3)$ (two orbits), $(3,3,2)$, and $(3,2,2)$. Next, the triples with
$$\{P,Q,R\} = \{1234,1256,3456\} \mbox{ and }\{1234,1256,1357\}$$
have the same mark $(2,2,2)$, but belong to different orbits. (Indeed, 
the former satisfies the identity $R = P+Q$, but for the latter
no member of $[R]$ can be the sum of a member of $[P]$ with a member of
$[R]$.) Thus we get at least $6$ orbits on triples of partitions.
(One can show by \cite{GAP} that the number of orbits on triples is 
indeed $6$ for both $\AAA_m$ and $\SSS_m$.) 

\smallskip
3) It remains to consider the case $2 \leq a \leq 4$. Since $m \geq 11$, 
we must have that $b \geq 3$, and so $h \leq 2$ by Lemma \ref{h1}.
For $a = 2$, $3$, or $4$, we set $s = 5$, $3$, or $2$, respectively.
Applying Lemma \ref{e32}, we are reduced to prove that 
$e' := \dim M_3^Y - \dim M_2^Y \geq 4$ for $b=s$ and $Y = \SSS_{sa}$. 
This has been done in 2) for $(a,s) = (4,2)$. 
Using (\ref{fixed}) and \cite{GAP},
one can check that $e' = 35$ for $(a,s) = (3,3)$. Finally, assume that 
$(a,s) = (2,5)$. Using (\ref{fixed}) and \cite{GAP}, we see that 
the number of $\AAA_{10}$-orbits on $2$-subsets, respectively on $3$-subsets of
$\Omega = \{1,2, \ldots, 945\}$ is $6$, respectively $139$. Since 
$\AAA_{10}$ has index $2$ in $Y = \SSS_{10}$, it follows that 
$\dim M_2^Y \leq 6$ and $\dim M_3^Y \geq 139/2$, whence $e' \geq 64$. 
\end{proof}

Now we can prove the main result concerning the special embeddings of 
$\AAA_m$ into $\AAA_n$:

\begin{theorem}\label{main-alt1}
Let $X = \AAA_m$ be embedded in $\AAA_n$ via its actions on partitions or 
on $k$-subsets  of $\{1,2, \ldots ,m\}$ with $2 \leq k < m/2$, and $11 \leq m < n$. 
Let $p=2$ and 
let $V$ be any $\F\AAA_n$-module of dimension 
greater than $1$. Then $\Res^{\AAA_n}_X V$ is reducible.   
\end{theorem}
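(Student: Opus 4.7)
I first reduce to the case that $V$ is irreducible as an $\F\AAA_n$-module, since any proper $\AAA_n$-submodule is automatically $X$-invariant. Then I split into cases according to whether $V$ extends to an irreducible $\F\SSS_n$-module $W$.

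In the generic extendible case, $V = \Res^{\SSS_n}_{\AAA_n} W$ with $W$ irreducible and neither trivial nor the basic spin module $D^{\alpha_n}$; by Theorem~\ref{hom} this is exactly the condition $d_3(W) > d_1(W)$ needed to invoke Theorem~\ref{reduction}. I verify its hypotheses for $W$ and $X$: since $\AAA_m$ is simple for $m \geq 5$, $X = O^2(X)$; the transitivity of $\AAA_m$ on $k$-subsets or on $(a^b)$-partitions of $\{1,\ldots,m\}$ gives $f_1(X) = 1$. When $2 \mid n$ and the embedding is via $2$-subsets, Corollary~\ref{pairs2} applies directly; for all other embeddings with $2 \mid n$, Theorem~\ref{bound} supplies $f_2(X) \geq 3$ and $e_3(X) \geq h(X) + 2$, comfortably exceeding the requirement $e_3(X) \geq h(X)+1$ of Theorem~\ref{reduction}. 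Hence Theorem~\ref{reduction} forces $\Res^{\SSS_n}_X W = \Res^{\AAA_n}_X V$ to be reducible.

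The basic spin and non-extendible cases are handled by dimension comparison. If $W = D^{\alpha_n}$, the Wales dimension formula together with the possible splitting of $W$ over $\AAA_n$ gives $\dim V \geq 2^{\lfloor(n-1)/2\rfloor - 1}$. If $V$ does not extend to $\SSS_n$, Proposition~\ref{ext1} yields $\dim V \geq 2^{(n-6)/4}$. On the other hand, the relation $\sum_i (\dim V_i)(\dim P_i) = |\AAA_m|$, with $P_i$ the projective cover of the irreducible $V_i$ and $\dim V_i \leq \dim P_i$, shows that every irreducible $\F\AAA_m$-module has dimension at most $\sqrt{|\AAA_m|} = \sqrt{m!/2}$. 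Lemma~\ref{start} gives $n \geq \binom{m}{2} \geq 55$ for $m \geq 11$, and a direct calculation then confirms that both $2^{\lfloor(n-1)/2\rfloor - 1}$ and $2^{(n-6)/4}$ strictly exceed $\sqrt{m!/2}$; so if $\Res^{\AAA_n}_X V$ were irreducible as an $\F\AAA_m$-module, this bound would be violated.

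The main obstacle is the tightness of this dimension bound at the boundary: for $m = 11$ and the $2$-subset embedding ($n = 55$), one has $2^{(n-6)/4} \approx 4871$ exceeding $\sqrt{11!/2} \approx 4468$ by barely a few hundred, which explains why the hypothesis $m \geq 11$ cannot be weakened by a naive dimension argument alone. A secondary technical point is that the reduction framework of \S5--\S7 is formulated under the standing assumption $2 \mid n$, so whenever $n$ is odd (for example, $m \equiv 2,3 \pmod 4$ in the $2$-subset embedding, or $(m,k) = (11,3)$) the generic extendible case requires an analogous reduction theorem for odd $n$. This is derived by reworking the permutation-module structure of $M_1, M_2, M_3$ in that regime, following the same overall pattern but exploiting the simpler decomposition $M_1 = \triv \oplus Q$ that is available when $n$ is odd.
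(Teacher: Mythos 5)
Your approach for $2\mid n$ tracks the paper closely: reduce to $V$ irreducible, rule out the trivial/basic-spin/non-extendible possibilities by a dimension comparison against $\sqrt{|X|}$, then apply Theorem~\ref{reduction} via Corollary~\ref{pairs2} and Theorem~\ref{bound} exactly as the paper does. The dimension arithmetic is sound (and your tightness remark at $m=11$, $n=55$ is correct), though the paper packages it slightly more efficiently: from $\dim V < \sqrt{|X|}$ and $|X|=m!/2 < 2^{(m(m-1)-12)/4}\le \bigl(2^{(n-6)/4}\bigr)^2$ one derives $\dim V < 2^{(n-6)/4}$ in one stroke, which simultaneously forces extendibility (Proposition~\ref{ext1}) and excludes the basic spin module, so there is no need to treat basic spin as a separate case afterward.

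The genuine gap is the odd-$n$ case. As you correctly observe, $n=\binom{m}{k}$ or $n=(ab)!/((a!)^b b!)$ is frequently odd (e.g.\ $m\equiv 2,3\pmod 4$ with $k=2$), and the entire §5--§7 machinery is built under the standing hypothesis $2\mid n$. Your proposal to ``rework the permutation-module structure of $M_1,M_2,M_3$ in that regime'' and derive an analogous reduction theorem for odd $n$ is not a proof: it is an unexecuted programme of considerable size (it would require odd-$n$ analogues of Lemmas~\ref{M1}--\ref{L2Mod4}, Proposition~\ref{inv}, and Theorem~\ref{reduction}, with hypotheses on $X$ that you have not even stated). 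Moreover it is unnecessary. Once one knows $\dim V < 2^{(n-6)/4}$, hence that $V$ extends to an irreducible $\F\SSS_n$-module and is neither trivial nor basic spin, the odd-$n$ case is disposed of immediately by citing \cite[Theorem~3.10]{KS1}: for $p=2$, $n$ odd, and $X$ a transitive but not $2$-transitive subgroup of $\SSS_n$ (which these embeddings are for $m\ge 11$), every non-trivial irreducible $\F\SSS_n$-module restricts reducibly to $X$. This is the route the paper takes, and without it or a worked-out substitute your argument does not close.
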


\begin{proof}
Assume for a contradiction that $V$ is irreducible over $X$.
By Lemma \ref{start}, $n \geq m(m-1)/2$. Since $m \geq 11$, we have
$$|X| = \frac{m!}{2} < 2^{\frac{m(m-1)-12}{4}} \leq (2^{\frac{n-6}{4}})^2.$$
The irreducibility of $V$ on $X$ forces that 
$\dim V < \sqrt{|X|} < 2^{(n-6)/4}$. This bound implies by
Proposition \ref{ext1} that $V$ extends to $\SSS_n$. Thus, without loss
we may assume that $V$ is an irreducible $\F\SSS_n$-module. Also,
since $2^{(n-6)/4} < 2^{\lfloor (n-2)/2 \rfloor}$, $V$ cannot be a basic spin module.

Now if $n$ is odd, then the $X$-module $V$ is reducible by 
\cite[Theorem 3.10]{KS1}. Hence $2|n$, and 
$V$ satisfies the conclusion (i) of Theorem \ref{hom}. By Corollary 
\ref{pairs2}, $X$ does {\it not} embed into  
$\AAA_n$ via its action on $2$-subsets of $\{1,2, \ldots ,m\}$. Hence, by Theorem 
\ref{bound}, $f_2(X) \geq 3$ and $e_3(X) \geq h(X)+2$. Clearly, $X$ is a perfect subgroup
and $f_1(X) = 1$. Therefore, the $X$-module $V$ is reducible by Theorem 
\ref{reduction}, a contradiction.
\end{proof}

\section{General embeddings of $\AAA_m$ into $\AAA_n$}
Throughout this section, $X\cong \AAA_m$ is a subgroup of $\AAA_n$, with $n>m\geq 5$. Recall the notation
$\Omega=\{1,2,\dots,n\}$ and $\Delta=\{1,2,\dots,m\}.$

\smallskip
First, we deal with the small cases $5  \leq m \leq 10$:

\begin{lemma}\label{small}
Let $X \cong \AAA_m$ be a transitive subgroup of $\AAA_n$ with $n > m$ and 
$5 \leq m \leq 10$, $V$ be an $\F\AAA_n$-module of dimension $>1$ such that
$\Res^{\AAA_n}_X V$ is irreducible. Then $(m,n) = (5,6)$, $(6,10)$, $(7,15)$, 
or $(8,15)$.
\end{lemma}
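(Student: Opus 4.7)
The plan is to enumerate all transitive embeddings of $X\cong \AAA_m$ into $\AAA_n$ with $m<n$ and $5\leq m\leq 10$, and for each to check whether it is one of the listed exceptions or else to exhibit reducibility. Transitive actions of $\AAA_m$ correspond to conjugacy classes of subgroups $Y\leq \AAA_m$ of index $n$, which can be read off from the ATLAS or computed in GAP. One discards at once those actions whose image lies in $\SSS_n\setminus \AAA_n$ (detectable from cycle types of a generating set on the cosets of $Y$).

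For each surviving pair $(m,n)$ one invokes the following pincer. On the one hand, irreducibility of $\Res^{\AAA_n}_X V$ forces $\dim V \leq d_{\max}(\F\AAA_m)$, the largest dimension of an irreducible $\F\AAA_m$-module, which for $m\leq 10$ is a small explicit integer read off the character/Brauer tables. On the other hand, if $V$ does not extend to $\SSS_n$, Propositions \ref{ext1}, \ref{ext2} and \ref{ext3} give lower bounds on $\dim V$ that grow exponentially in $n$; these immediately eliminate the overwhelming majority of pairs. So one may assume $V$ extends to an irreducible $\F\SSS_n$-module $D^\lambda$, and one restricts attention to the short list of $p$-regular $\lambda\vdash n$ with $\dim D^\lambda\leq d_{\max}(\F\AAA_m)$.

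For the remaining handful of triples $(m,n,\lambda)$ one establishes reducibility directly. In characteristic $0$ this is a direct ordinary-character comparison; in positive characteristic one uses the decomposition matrices of $\AAA_n$ (all available in GAP or the Modular Atlas for the small $n$ in question) together with Frobenius reciprocity
$$\dim\End_{\F X}(\Res^{\AAA_n}_X V)=\dim\Hom_{\F\AAA_n}\bigl(\Ind^{\AAA_n}_X \triv_X,\End_\F V\bigr),$$
which reduces the question to counting $X$-orbits on $\AAA_n/X$ and computing scalar products of Brauer characters. The four pairs $(5,6)$, $(6,10)$, $(7,15)$ and $(8,15)$ are precisely those for which this enumeration uncovers at least one nontrivial $V$ whose restriction to $X$ stays irreducible (coming from small-dimensional representations of $\AAA_6$, $\AAA_{10}$ and $\AAA_{15}$, the latter two related to the rank-$3$ actions of $\AAA_8\cong \GL_4(2)$ on the $15$ nonzero vectors of $\F_2^4$); in all other surviving pairs the check produces a nontrivial proper $X$-submodule in every candidate $V$.

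The main obstacle is the bookkeeping: several dozen pairs $(m,n)$ must be enumerated, and within each a short list of candidate modules tested. The most delicate cases lie in characteristics $2$ and $3$, where many characteristic-zero shortcuts break down and one must rely on explicit modular data; a secondary subtlety is the correct handling of self-Mullineux partitions $\lambda=\lambda^{\MB}$, where $D^\lambda$ splits on restriction to $\AAA_n$ into $E^\lambda_\pm$ and each half must separately be tested as a candidate $V$.
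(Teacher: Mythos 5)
Your plan is feasible in spirit but it misses the two structural observations that make the paper's proof short, and without at least one of them the ``short list of $\lambda$'' step is not actually under control for the larger $n$ that arise.

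The paper splits the analysis according to whether $V$ is the heart of the natural permutation module $\F\Omega$ or not. For $V$ \emph{not} the heart, it invokes \cite[Lemma 6.1]{GT1}, which gives the a priori lower bound $\dim V \geq (n^2-5n+2)/2$ for every irreducible $\F\AAA_n$-module of dimension $> 1$ other than (components of) $D^{(n-1,1)}$. This quadratic-in-$n$ bound, compared against the constant $b_\F(\AAA_m)$ for $m\le 10$, instantly kills all but a handful of pairs; no decomposition-matrix lookup is needed, and the bound applies whether or not $V$ extends to $\SSS_n$. Your proposal instead tries to enumerate $p$-regular $\lambda\vdash n$ with $\dim D^\lambda \le d_{\max}(\F\AAA_m)$ and consult modular data, but for $m=9,10$ the smallest degree $n>m$ of a transitive action is already $36$ or $45$ (and there are infinitely many larger ones to rule out), far beyond the range where modular Atlas data exist; Propositions \ref{ext1}--\ref{ext3} cannot close that gap because they say nothing about modules that \emph{do} extend to $\SSS_n$. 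You therefore need exactly a James-type minimal-dimension theorem (or GT1's lemma) to assert the list is short, and that ingredient is absent from your write-up.

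For the remaining case, where $V$ \emph{is} the heart of $\F\Omega$, the paper observes that if $X$ were not doubly transitive on $\Omega$ then the ordinary permutation character of $\F\Omega|_X$ splits as $1_X+\alpha+\beta$ with $\alpha(1),\beta(1)>1$, forcing $\Res^{\AAA_n}_X V$ reducible since $\F\Omega$ has Brauer character $\varphi + e\cdot 1$ with $\varphi$ irreducible and $e\in\{1,2\}$. Hence $X$ must be $2$-transitive on $\Omega$, and the finitely many $2$-transitive actions of $\AAA_m$ for $m\le 10$ are read directly from the Atlas, giving exactly the four listed $(m,n)$. Your sketch replaces this with an orbit-counting/Brauer-character scalar product calculation, which would also work but is both heavier and unnecessary once the $2$-transitivity dichotomy is noticed. (Incidentally, the $(7,15)$ and $(8,15)$ exceptions arise from the $2$-transitive actions of $\AAA_7 < \GL_4(2)$ and $\AAA_8\cong\GL_4(2)$ on the $15$ points of $PG(3,2)$, not from rank-$3$ actions as your remark suggests.)

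In short: your pincer idea is the right instinct, but the missing piece is the uniform lower bound $\dim V \gtrsim n^2/2$ for $V$ not the natural heart; with that added and the $2$-transitivity reduction made explicit, your argument would converge to the paper's.
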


\begin{proof}
1) First we consider the case where $V$ is not the heart of the natural permutation 
module of $Y := \AAA_n$. Suppose for instance that $m = 10$. The assumptions that
$X$ is a transitive subgroup of $Y$ and $n > m$ imply by \cite{Atlas} that $n \geq 45$. 
This in turn implies by \cite[Lemma 6.1]{GT1} that 
$\dim V \geq (n^2-5n+2)/2 = 901 > 567 \geq b_\F(X)$, where $b_\F(X)$ denotes the largest degree of irreducible $\F X$-representations. It follows that $\Res^{\AAA_n}_X V$ is 
reducible. The same arguments apply to the case $m = 9$, where we have 
$n \geq 36$ and $b_\F(X) \leq 216$.  

Suppose now that $m = 8$. According to \cite{Atlas} and \cite{JLPW}, $n = 15$ or 
$n \geq 28$; furthermore, $b_\F(X) \leq 70$. As above, $\dim V > b_\F(X)$ if
$n \geq 28$. It follows that $n = 15$. The same arguments apply to the case 
$5 \leq m \leq 7$. (In fact, one can show that we must have either $(m,n) = (5,6)$
or $(m,n,p,\dim V) = (8,15,2,64)$.)  

\smallskip
2) Now we may assume that $V$ is the heart of the natural permutation 
module $\F\Omega$ of 
$Y$. By the assumption, $X$ acts transitively
on $\Omega$. If this action is not doubly transitive, then the restriction of the 
permutation character of $\C\Omega$ is $1_X + \alpha + \beta$ where $\alpha$, $\beta$
are (not necessarily irreducible) $X$-characters of degree $> 1$. 
Since the Brauer character of $\F\Omega$ is $\varphi + e \cdot 1_Y$ with  
$e \in \{ 1,2 \}$ and $\varphi \in \IBR_p(Y)$,  it follows that $V$ is reducible over $X$. 
Thus $X$ acts doubly transitively on $\Omega$. Now we can read off the possible 
value of $n$ using \cite{Atlas}. 
\end{proof}

\begin{remark} \label{RSmallCases} 
{\rm 
The special cases $(m,n)$ listed in the statement of Lemma~\ref{small} are indeed exceptional. For these pairs, we can embed $\AAA_m$ into $\AAA_n$  
so that $\AAA_m$ acts doubly transitively on $\Omega$. Take $V$ to be the heart of the
permutation module $\C\Omega$. It is well known that any doubly transitive group  is irreducible on $V$. This is also almost always true for $p>0$ --- see \cite{Mortimer}.  

}
\end{remark}

We will need the following group-theoretic result:

\begin{lemma}\label{index}
Let $X \cong \AAA_m$ be a transitive subgroup of $\AAA_n$ with $n > m \geq 11$.
Then $n \geq m(m-1)/2$ and $X$ cannot act doubly transitively on $\Omega$. If $X$ is imprimitive, then $n \geq m(m-1)$. 
If $X$ is primitive, then one of the following statements holds:
\begin{enumerate}
\item[{\rm (i)}] $X$ acts on $\Omega$ via its action on $k$-subsets of $\Delta$ 
with $2 \leq k < m/2$ or on partitions of $\Delta$, and 
$n \geq m(m-1)/2$. 
\item[{\rm (ii)}] $n > |X|^{3/10} > m(m-1)$.
\end{enumerate}
\end{lemma}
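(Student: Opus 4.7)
The transitive action of $X\cong\AAA_m$ on $\Omega$ is encoded by the point stabilizer $H\leq X$, with $n=[X:H]$, and primitivity on $\Omega$ is equivalent to maximality of $H$. For the universal bound $n\geq m(m-1)/2$ I would use the well-known fact that, for $m\geq 9$, the two smallest indices of proper subgroups of $\AAA_m$ are $m$ (achieved uniquely up to conjugacy by $\AAA_{m-1}$) and $\binom{m}{2}$; since $n>m$ rules out $H$ being conjugate to $\AAA_{m-1}$ in the primitive case, and since the imprimitive case only strengthens the bound (see next paragraph), we obtain $n\geq\binom{m}{2}$. The impossibility of a doubly transitive action of $X$ on $\Omega$ follows from the classification of finite $2$-transitive groups: for $m\geq 11$ the only such action of $\AAA_m$ is the natural one of degree $m$, contradicting $n>m$.

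For the imprimitive case, pick a nontrivial block $\Delta_1\subset\Omega$ of size $s\geq 2$ and set $K:=\Stab_X(\Delta_1)$, so that $H<K<X$ with $[K:H]=s$ and $[X:K]=n/s$. Simplicity of $\AAA_m$ forces the action on blocks to be faithful, so $[X:K]\geq m$. If $[X:K]=m$ then $K$ is conjugate to $\AAA_{m-1}$, and $H$ is a proper subgroup of $\AAA_{m-1}$; since $m-1\geq 10$, $\AAA_{m-1}$ is simple with smallest proper-subgroup index $m-1$, giving $[K:H]\geq m-1$ and hence $n\geq m(m-1)$. Otherwise $[X:K]>m$, so the two-smallest-indices fact yields $[X:K]\geq\binom{m}{2}$, and combined with $s\geq 2$ this still gives $n\geq m(m-1)$.

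For the primitive case, $H$ is maximal in $\AAA_m$, and the classification of maximal subgroups (by the O'Nan--Scott theorem) splits into three cases: (a) $H$ is intransitive on $\Delta$, i.e.\ $H=(\SSS_k\times\SSS_{m-k})\cap\AAA_m$ for some $1\leq k<m/2$; (b) $H$ is transitive but imprimitive on $\Delta$, stabilising a partition $m=ab$ with $a,b\geq 2$; or (c) $H$ is primitive on $\Delta$ and proper in $\AAA_m$. The hypothesis $n>m$ excludes $k=1$ in (a), so cases (a) and (b) together produce the special embeddings of the lemma, and Lemma~\ref{start} delivers $n\geq m(m-1)/2$, yielding (i). In case (c) I would invoke Mar\'oti's bound $|H|\leq m^{1+\lfloor\log_2 m\rfloor}$ (together with its small list of exceptional primitive groups, all of even smaller order) to obtain $n=[X:H]\geq(m!/2)/|H|$, and then deduce both $n>|X|^{3/10}$ and $n>m(m-1)$ via Stirling for $m$ above an explicit threshold. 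The main obstacle lies in this final step: the inequality $n>|X|^{3/10}$ is tight enough that the cruder Praeger--Saxl bound $|H|\leq 4^m$ does not by itself close the case for the smallest values of $m\geq 11$, so one must either appeal to the sharper Mar\'oti bound or verify the remaining small cases by direct inspection of the standard tables of primitive permutation groups of small degree.
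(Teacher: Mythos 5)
Your overall structure is the same as the paper's: both reduce the transitive action on $\Omega$ to a choice of an intermediate subgroup $H\le K\le X$ (you use a block stabilizer; the paper uses a maximal overgroup $M$ of the point stabilizer), both invoke the classification of $2$-transitive actions of $\AAA_m$, both split into the intransitive/imprimitive/primitive trichotomy for the action of $K$ (resp.\ $M$) on $\Delta$, and both call on Lemma~\ref{start} for the two ``special'' families. The imprimitive-case argument via $[X:K]\ge\binom{m}{2}$ (or $[X:K]=m$ and $[K:H]\ge m-1$) is the paper's argument with slightly different bookkeeping.

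The one place the routes genuinely diverge, and also the one place your argument is left open, is the subcase where the maximal subgroup is \emph{primitive} on $\Delta$. The paper finishes this cleanly with Bochert's theorem (\cite[Theorem 14.2]{Wie}): a primitive subgroup $M<\SSS_m$ not containing $\AAA_m$ has $[\SSS_m:M]\ge\lfloor(m+1)/2\rfloor!$, and a one-line computation shows $\tfrac{1}{2}\lfloor(m+1)/2\rfloor!>(m!/2)^{3/10}>m(m-1)$ for all $m\ge 11$, with no exceptional cases to check. You propose Mar\'oti's order bound $|H|\le m^{1+\lfloor\log_2 m\rfloor}$ and then Stirling, but you have left the verification unfinished (``for $m$ above an explicit threshold'' plus ``direct inspection of the standard tables''), and your parenthetical description of Mar\'oti's exceptions as ``a small list of even smaller order'' is not accurate: besides a finite list of four Mathieu groups, Mar\'oti's theorem has an \emph{infinite} exceptional family of product-type primitive groups $H\le\SSS_a\wr\SSS_b$ of degree $\binom{a}{k}^b$, which can have order far exceeding $m^{1+\lfloor\log_2 m\rfloor}$; you would need to rule these out separately (e.g.\ by noting that then $m=\binom{a}{k}^b$ is a perfect power, a case that does not occur here since $H$ is to be a maximal \emph{proper} subgroup of $\AAA_m$, or by bounding their index directly). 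As written, this final step is a genuine gap; switching to Bochert's bound as the paper does would close it immediately and uniformly for $m\ge 11$.
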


\begin{proof}
From the classification of doubly transitive 
permutation representations of $\AAA_m$ \cite{Ma} and the assumption  
$m \geq 11$, it follows that $\AAA_m$ has no doubly transitive permutation
representation of degree $n > m$. Next, 
consider the transitive action of $X$ on $\Omega$ and 
let $X_1$ be a point stabilizer of this action. Then $[X:X_1] = n > m$.
We can find a maximal subgroup $M$ of $X$ containing $X_1$. 

Now we consider the action of $M$ on 
$\Delta$. If this action is intransitive, then 
by maximality of $M$, $M$ is the stabilizer in $X$ of a $k$-subset
of $\Delta$ for some $1 \leq k < m/2$. On the other hand, if this action
is transitive but imprimitive, then $M$ is the stabilizer in $X$ of 
a partition of $\Delta$ into $b$ $a$-subsets, with $a,b \geq 2$ and 
$ab = m$. By Lemma \ref{start}, in either case 
we have $n \geq [X:M] \geq m(m-1)/2$ (whence 
$[X:X_1] \geq m(m-1)$ if $X$ is imprimitive),
unless $M$ is intransitive and $k=1$. In this exceptional case,
$M = \AAA_{m-1}$ and $[X:X_1] = n > m = [X:M]$. Thus $X_1$ is a proper
subgroup of $M$, and so, as is well known, $[M:X_1] \geq m-1$ and 
$n \geq m(m-1)$. 

Suppose now that $M$ acts primitively on $\Delta$. By Bochert's 
Theorem 14.2 of \cite{Wie}, $[\SSS_m:M] \geq \lfloor (m+1)/2 \rfloor !$. 
Since $m \geq 11$, we have 
$\frac{\lfloor (m+1)/2 \rfloor !}{2} > \left(\frac{m!}{2}\right)^{3/10}.$
Therefore $n \geq [X:M] > |X|^{3/10} > m(m-1)$ as $m \geq 11$.

We have proved the bound $n \geq m(m-1)/2$ and also that 
$n \geq m(m-1)$ if $X$ is imprimitive. Assume now that $X$ is primitive.
Then $X_1 = M$, and the above analysis yields the final claim of the lemma.
\end{proof}

\begin{theorem}\label{main-alt2}
Let $X \cong \AAA_m$ be a primitive subgroup of $\AAA_n$ with $n > m \geq 9$.
Let $p=2$ and 
let $V$ be an 
$\F\AAA_n$-module of dimension $>1$. Then the restriction $\Res^{\AAA_n}_X V$ is 
reducible.
\end{theorem}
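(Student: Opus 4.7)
The plan splits on the range of $m$. For $9 \leq m \leq 10$ I would appeal to Lemma~\ref{small}: the list of exceptional pairs $(m,n)$ there does not include $m = 9$ or $m = 10$, so reducibility of $\Res^{\AAA_n}_X V$ is immediate. For $m \geq 11$ I would apply Lemma~\ref{index}, which gives the dichotomy: either (a) $X$ acts on $\Omega$ via its action on $k$-subsets (for some $2 \leq k < m/2$) or on set partitions of $\Delta$, in which case Theorem~\ref{main-alt1} finishes the argument; or (b) $n > |X|^{3/10} > m(m-1)$.

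In case (b) I would argue by contradiction, assuming $\Res^{\AAA_n}_X V$ is irreducible. The universal bound $\dim V \leq |X|^{1/2}$ combined with $n > |X|^{3/10}$ yields $\dim V < n^{5/3}$. A direct numerical check, already valid at $m = 11$, shows both $n^{5/3} < 2^{(n-6)/4}$ and $n^{5/3} < 2^{\lfloor(n-1)/2\rfloor}$. By Proposition~\ref{ext1}, $V$ then extends to an $\F\SSS_n$-module, and the second dimension bound rules out $V$ being the basic spin module $D^{\alpha_n}$; since $\dim V > 1$, $V$ is also non-trivial. If $n$ is odd, I would invoke \cite[Theorem~3.10]{KS1} exactly as in the proof of Theorem~\ref{main-alt1} to derive a contradiction. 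If $n$ is even, Theorem~\ref{hom} gives $d_3(V) > d_1(V)$, so the plan is to invoke Theorem~\ref{reduction}. The hypotheses $X = O^2(X)$ and $f_1(X) = 1$ are immediate, since $\AAA_m$ is perfect and transitive on $\Omega$.

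The main obstacle is the verification of the remaining hypotheses of Theorem~\ref{reduction} in case (b): the inequality $e_3(X) \geq h(X) + 1$ and the alternative $f_2(X) \geq 3$ or $\Specht_2^X \neq 0$, for a general primitive action of $\AAA_m$ lying outside the subset and partition families. To bound $h(X)$ I would combine $\dim H^2(\AAA_m, \F) = 1$ (cf.\ Lemma~\ref{h1}) with the long exact sequence attached to (\ref{ses1}) and an O'Nan--Scott-based analysis of $\Hom(X_1, (\F,+))$ for the point stabilizer $X_1$ of the primitive action, using $\Hom(\AAA_s, (\F,+)) = 0$ to force $h(X)$ to be bounded by a small constant. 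The orbit counts $f_2(X)$ and $f_3(X)$ would then be estimated by combinatorial arguments in the spirit of Lemmas~\ref{e2} and~\ref{e31}, the strong lower bound $n > m(m-1)$ making $f_3(X)$ grow while $f_2(X)$ stays small, hence $e_3(X) \geq h(X)+1$. For the invariance condition I would extend the orbit-sum computation in Lemma~\ref{rank3} to arbitrary primitive non-$2$-transitive subgroups of even degree, by identifying an $X$-orbit $\Psi \subseteq \Omega_2$ whose orbit sum is orthogonal to both $T_2$ and $\eta_{1,2}(M_1)$, hence lies in $\Specht_2 = \langle T_2 \rangle^\perp \cap \eta_{1,2}(M_1)^\perp$. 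Controlling the parities of orbit sizes and orbit neighbourhoods in a general primitive action of $\AAA_m$ is the delicate point; once accomplished, Theorem~\ref{reduction} delivers the reducibility of $V|_X$, contradicting the assumption and completing the proof.
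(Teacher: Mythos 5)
Your outline matches the paper exactly through the reduction to $m \geq 11$, the dichotomy of Lemma~\ref{index}, the dimension bound $\dim V < n^{5/3}$ in case~(b), the extendibility to $\SSS_n$ via Proposition~\ref{ext1}, and the dispatch of odd $n$ via \cite[Theorem~3.10]{KS1}. After that, you and the paper part ways, and your branch has a genuine gap.

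In case~(b) with $n$ even you propose to apply Theorem~\ref{reduction}, which requires the orbit and cohomology estimates $e_3(X) \geq h(X)+1$ and either $f_2(X)\geq 3$ or $\Specht_2^X\neq 0$. These hypotheses are established in the paper only for the two explicit families in case~(i) of Lemma~\ref{index} (the subset and set-partition actions, via Lemmas~\ref{h1}, \ref{e2}, \ref{e31}, \ref{e32} and Theorem~\ref{bound}); nothing analogous is proved for the case~(ii) actions, where the point stabilizer of $X$ is itself a primitive maximal subgroup of $\AAA_m$. You acknowledge this yourself: ``Controlling the parities of orbit sizes and orbit neighbourhoods in a general primitive action of $\AAA_m$ is the delicate point.'' But that delicate point is the entire content of the remaining case, and you do not carry it out; the O'Nan--Scott / long-exact-sequence sketch does not by itself bound $h(X)$ against $e_3(X)$ or produce the $\Specht_2$ invariant without knowing concretely what the point stabilizers and sub-orbits look like.

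The paper sidesteps this entirely by exploiting how brutally strong the inequality $n > |X|^{3/10}$ is. It yields $\dim V < n^{5/3} < \tfrac12(n-1)(n-2)$, and James's low-degree theorem \cite[Theorem~7]{J2} then forces $V \cong D^{(n-k,k)}$ with $k \in \{1,2\}$. For $k=1$ the irreducibility of $V$ over $X$ would make $X$ doubly transitive on $\Omega$, contradicting Lemma~\ref{index}. For $k=2$ the paper analyses the composition factors of $\wedge^2(D^{(n-1,1)})$ as an $X$-module: if $D^{(n-1,1)}$ were reducible over $X$ the exterior square would have too many large $X$-composition factors, so $D^{(n-1,1)}$ is irreducible over $X$, again forcing double transitivity. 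This route is shorter and avoids any orbit or cohomology estimates for the unknown primitive embeddings. If you want to complete your version, you would need to actually prove, for every primitive $\AAA_m < \AAA_n$ with point stabilizer a primitive maximal subgroup of $\AAA_m$, the bounds on $h(X)$ and $e_3(X)$ that Theorem~\ref{reduction} needs — a project the authors explicitly defer to a sequel for other primitive subgroups, and which the James-theorem shortcut makes unnecessary here.
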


\begin{proof}
By Lemma~\ref{small}, we may assume that $m\geq 11$. 
Assume for a contradiction that $V$ is irreducible over $X$.
We apply Lemma \ref{index}. In the case (i) of Lemma \ref{index}  
we are done by Theorem \ref{main-alt1}.

Now suppose that the case (ii) of Lemma \ref{index} holds. Then 
$|X| < n^{10/3}$ and $n \geq 155$ as
$m \geq 11$. Since $V$ is irreducible over $X$, we must have that 
$$\dim V < n^{5/3} < \frac{1}{2}(n-1)(n-2).$$ 
In particular, $\dim V < 2^{(n-6)/4}$ (as $n \geq 155$), 
whence $V$ extends to $\SSS_n$ by Proposition \ref{ext1}. We denote the 
(unique up to isomorphism) extension of $V$ to $\SSS_n$ by the same letter~$V$. 

By Lemma \ref{index}, the action of $X = \AAA_m$ on 
$\Omega$ is not doubly transitive. Hence,
if $n$ is odd then, we are done by appealing to \cite[Theorem 3.10]{KS1}. 
Assume that $2|n \geq 155$. 
Then the condition $1 < \dim V < (n-1)(n-2)/2$ for the irreducible 
$\F\SSS_n$-module $V$ implies by \cite[Theorem 7]{J2} that 
$V \cong D^{(n-k,k)}$ 
with $k = 1$ or $2$. As before, let $1+\alpha$ denote the (complex) 
permutation character of $\SSS_n$ on $\Omega$. 

Suppose $k = 1$. Then $V = D^{(n-1,1)}$ is irreducible over $X$. Denoting by
$\alpha^0$ the restriction of $\alpha$ to $2'$-elements in $\SSS_n$, we see
that the Brauer character of $V$ is $\alpha^0-1$. Furthermore, $X$ is perfect
and $[\Res^{\SSS_n}_X \alpha,1_X]_X = 0$ by transitivity of $X$ on $\Omega$. Hence
the irreducibility of $V$ over $X$ forces that $\alpha$ is also irreducible over $X$.
In other words, $X$ acts doubly transitively on $\Omega$, a contradiction.

Assume now that $k = 2$. Then $\wedge^2(D^{(n-1,1)})$ has a composition
series with composition factors $V$ (once), and $\triv$ (once if $4|n$ and twice
if $n \equiv 2 \pmod 4$). The same is true for $\wedge^2(D^{(n-1,1)})$ considered as an $X$-module. 
Suppose in addition that $D^{(n-1,1)}$ is reducible over $X$. 
Then we can write the Brauer character of the $\F X$-module 
$\Res^{\SSS_n}_X D^{(n-1,1)}$ as 
$\beta+\gamma$, where $\beta$ and $\gamma$ are Brauer characters of $X$,
$\beta(1) \geq 1$ and $\gamma(1) \geq 3$. Note that
$$\wedge^2(\beta+\gamma) = \wedge^2(\beta) + \wedge^2(\gamma) + \beta\gamma.$$
It follows that the Brauer character of the $X$-module $\wedge^2(D^{(n-1,1)})$ 
is the sum of three Brauer characters, with at least two of degree $\geq 3$.
This contradicts the aforementioned composition structure of the $X$-module 
$\wedge^2(D^{(n-1,1)})$. Thus $D^{(n-1,1)}$ is irreducible over $X$. But then, arguing as
in the previous paragraph, we again arrive at the contradiction that $X$ acts doubly transitively on 
$\Omega$.   
\end{proof}

The case $p\neq 2$ can be done using results of 
\cite{KS1}, \cite{KS2} and Proposition \ref{ext2}: 

\begin{proposition}\label{alt3}
Let $X \cong \AAA_m$ be a transitive subgroup of $\AAA_n$ with $n > m \geq 9$.
Let $p \neq 2$ and  $V$ be an $\F\AAA_n$-module of dimension $>1$. 
Then $\Res^{\AAA_n}_X V$ is reducible.
\end{proposition}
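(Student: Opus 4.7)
The plan is to adapt the argument of Theorem~\ref{main-alt2} to odd characteristic, replacing Proposition~\ref{ext1} by Proposition~\ref{ext2} and invoking the existing solution of Problem~\ref{restr} from \cite{BK, KS2} in the case $p > 3$.

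Assume for contradiction that $\Res^{\AAA_n}_X V$ is irreducible. By Lemma~\ref{small} we may assume $m \geq 11$, since all exceptional pairs listed there have $m \leq 8$. Lemma~\ref{index} then gives $n \geq m(m-1)/2 \geq 55$ and shows that $X$ does not act $2$-transitively on $\Omega$; in particular $\dim V \leq \sqrt{|X|} = \sqrt{m!/2}$. For $p > 3$ (or $p = 0$), the classification of triples $(G, V, Y)$ with $G \in \{\SSS_n, \AAA_n\}$ and $\Res^G_Y V$ irreducible obtained in \cite{BK, KS2} (building on \cite{Saxl, KW} in characteristic zero) contains no triple with $Y = \AAA_m$ a transitive subgroup of $\AAA_n$ and $n > m \geq 11$, a contradiction. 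It therefore remains to handle $p = 3$.

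For $p = 3$, if $n$ is odd then \cite[Theorem 3.10]{KS1} immediately yields reducibility of $V|_X$, since $X$ is not $2$-transitive. Hence assume $n$ is even. I would first show that $V$ extends to $\SSS_n$: otherwise Proposition~\ref{ext2}(ii) gives $\dim V \geq 2^{(n-8)/4}$, and combining $n \geq m(m-1)/2$ (refined via Lemma~\ref{index} to $n \geq m(m-1)$ for imprimitive $X$, or $n \geq |X|^{3/10}$ for non-special primitive $X$) with Stirling's estimate for $\sqrt{m!/2}$ rules this out for $m \geq 11$, leaving only borderline small special-embedding cases to be checked directly. Writing $V = D^\lambda$ for a $3$-regular $\lambda \vdash n$, Proposition~\ref{ext3} forces $\lambda_1 - \lambda_2 < 3\lceil (n+1)/5 \rceil$; then the James-style lower bound on $\dim D^\lambda$ from \cite[Theorem 5.1]{GLT} combined with $\dim V \leq \sqrt{m!/2}$ narrows $\lambda$ down to a short list, essentially $(n-1,1)$, $(n-2,2)$, $(n-2,1,1)$ and their Mullineux duals. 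Each of these is ruled out exactly as in step~2 of the proof of Theorem~\ref{main-alt2}: the Brauer-character argument applied to $\wedge^k(D^{(n-1,1)})$ for $k = 1, 2$ shows that irreducibility of $V$ over $X$ would force $X$ to be $2$-transitive on $\Omega$, contradicting Lemma~\ref{index}.

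The main obstacle is handling $p = 3$ in the intermediate range $11 \leq m \leq 20$, where the bound of Proposition~\ref{ext2}(ii) is tight against $\sqrt{m!/2}$ and the small-dimension classification must be pushed carefully; the critical step is combining this dimension bound with Proposition~\ref{ext3} and the sharper structural inequalities of Lemma~\ref{index}, together with a small finite check for the tightest configurations (particularly when $X$ acts via a special embedding giving $n = m(m-1)/2$).
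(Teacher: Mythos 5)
Your treatment of $p>3$ and $p=0$ via the solved classification from \cite{BK,KS2} is fine, and your use of Proposition~\ref{ext2}(ii) to rule out the case that $V$ does \emph{not} extend to $\SSS_n$ when $p=3$ is essentially the argument the paper makes (the paper just notes directly that $\dim V\geq 2^{(n-8)/4}\geq c_m:=2^{(m^2-m-16)/8}$ exceeds $\sqrt{|X|}$ for $m\geq 12$, and exceeds the largest $\C\AAA_{11}$-degree $2310$ for $m=11$, so no ``borderline special-embedding cases'' remain to be checked). The genuine gap is in how you then handle the case $p=3$ with $V$ extendible to $\SSS_n$. The paper disposes of this case for all $p\neq 2$ in a single line by invoking the Main Theorem of \cite{KS1}, which says that a nontrivial irreducible $\F\SSS_n$-module is reducible over any simply transitive subgroup. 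You never cite this result; instead you try to reproduce the $p=2$ machinery of Theorem~\ref{main-alt2} (narrow $\lambda$ to a short list and apply a wedge-product Brauer-character argument), which is unnecessary and in fact fails here.

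Concretely, the step ``Proposition~\ref{ext3} forces $\lambda_1-\lambda_2 < 3\lceil(n+1)/5\rceil$'' is a backwards application. Proposition~\ref{ext3} asserts: if $\lambda_1-\lambda_2$ is large, then $D^\lambda$ is irreducible over $\AAA_n$. Its contrapositive gives a bound on $\lambda_1-\lambda_2$ only when $D^\lambda$ is \emph{reducible} over $\AAA_n$. But you are now in the case where $V$ extends to $\SSS_n$, i.e.\ $D^\lambda|_{\AAA_n}=V$ is \emph{irreducible}, so the conclusion of Proposition~\ref{ext3} already holds and yields no constraint on $\lambda_1-\lambda_2$ whatsoever. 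Consequently the claimed short list $(n-1,1),(n-2,2),(n-2,1,1)$ does not follow. Separately, your appeal to \cite[Theorem~3.10]{KS1} for ``$p=3$, $n$ odd'' is suspect: the paper invokes that theorem only in the $p=2$ context, and in any case it concerns $\F\SSS_n$-modules, so it presupposes that $V$ extends to $\SSS_n$, which you have not yet established at that point. The repair is simple: for $p=3$ and $V$ extendible, cite \cite[Main Theorem]{KS1} directly (it applies since $X$ is transitive but not doubly transitive by Lemma~\ref{index}); for $V$ not extendible, your dimension-bound contradiction is already complete.
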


\begin{proof}
By Lemma~\ref{small}, we may assume that $m\geq 11$. 
Assume for a contradiction that $\Res^{\AAA_n}_X V$ is irreducible. 
By Lemma \ref{index}, $X$ is not doubly transitive, and $n \geq m(m-1)/2$.
If $V$ is extendible to $\SSS_n$, we can apply \cite[Main Theorem]{KS1} to get 
a contradiction. Suppose $V$ is not extendible to $\SSS_n$. If $p \neq 3$, we again 
arrive at a contradiction by using \cite[Main Theorem]{KS2}. 
So we may assume that $p=3$. By Proposition \ref{ext2}(ii) we have that
$$\dim V \geq 2^{\frac{n-8}{4}} \geq 2^{\frac{m^2-m-16}{8}} =:c_m.$$
Now, if $m \geq 12$ then $c_m > \sqrt{m!/2} = \sqrt{|X|}$. If 
$m = 11$, then $c_m > 3444$ whereas the largest degree of complex irreducible representations of $X = \AAA_{11}$ is $2310$. 
In either case, $X$ cannot be irreducible on $V$. 
\end{proof}

Now Theorem \ref{main} follows immediately from Theorem \ref{main-alt2} and
Proposition \ref{alt3}.

Note that for $p\neq 2$, Proposition~\ref{alt3} shows that any proper transitive  subgroup $X\cong\AAA_m$ of $\AAA_n$ acts reducibly on all non-trivial modules over $\F\AAA_n$ (provided $m\geq 9$). For $p=2$, if $X$ is primitive, the same result holds by Theorem~\ref{main-alt2}. Now we handle imprimitive embeddings of $\AAA_m$ into $\AAA_n$ for $p=2$. Here our result is a little weaker: 

\begin{proposition}\label{alt22}
Let $X \cong \AAA_m$ be a (transitive) imprimitive subgroup of $\AAA_n$ with 
$n > m \geq 9$. Let $p=2$ 
and let $V$ be an irreducible $\F\AAA_n$-module of dimension $>1$. Then 
the $X$-module $\Res^{\AAA_n}_X V$ cannot be primitive irreducible.
\end{proposition}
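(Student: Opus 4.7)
The plan is to follow the scheme of the proof of Theorem~\ref{main-alt2}, using the imprimitivity of $X$ on $\Omega$ to apply Theorem~\ref{reduction} whenever possible and to exploit the primitivity hypothesis on $V|_X$ in the residual cases. By Lemma~\ref{small} I may assume $m\geq 11$, whence Lemma~\ref{index} gives $n\geq m(m-1)$; in particular $X$ is not $2$-transitive on $\Omega$. Arguing by contradiction, suppose $V|_X$ is irreducible and primitive. Then $(\dim V)^2\leq |X|=m!/2$, and a routine Stirling-type check shows $\sqrt{m!/2}<2^{(n-6)/4}$ for $m\geq 11$ and $n\geq m(m-1)$. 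Hence Proposition~\ref{ext1} forces $V$ to extend to $\SSS_n$, and I henceforth view $V$ as an irreducible $\F\SSS_n$-module.

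If $n$ is odd, \cite[Theorem 3.10]{KS1} applied to the transitive non-$2$-transitive subgroup $X$ gives reducibility of $V|_X$, contradicting our standing hypothesis. So assume $n$ is even. The trivial module is excluded by $\dim V>1$, and the basic spin module $D^{\alpha_n}$ of dimension $2^{(n-2)/2}\geq 2^{(m(m-1)-2)/2}$ violates $\dim V\leq \sqrt{m!/2}$. By Theorem~\ref{hom} we are left with the case $d_3(V)>d_1(V)$, and I will now try to apply Theorem~\ref{reduction} to $X$.

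The conditions $X=O^2(X)$ and $f_1(X)=1$ are immediate since $X=\AAA_m$ is perfect and transitive. Since $X$ is not $2$-transitive, $f_2(X)\geq 2$; if $f_2(X)\geq 3$ we are done with that clause, while if $f_2(X)=2$ then $X$ is rank $3$ on $\Omega$ and Lemma~\ref{rank3} yields $\Specht_2^X\neq 0$. The last hypothesis $e_3(X)\geq h(X)+1$ I would check by a case analysis driven by the block structure. The long exact sequence coming from (\ref{ses1}) gives
$$h(X)\leq \dim\Hom(X_1,(\F,+))+\dim H^2(X,\F)\leq \dim\Hom(X_1,(\F,+))+1,$$
and since $X_1$ lies strictly inside a maximal subgroup $M$ of $\AAA_m$, a computation in the spirit of Lemma~\ref{h1} bounds $\dim\Hom(X_1,(\F,+))$ by a small constant. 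Meanwhile, using $n\geq m(m-1)$, $f_3(X)-f_2(X)$ is bounded below by a direct count of $X$-orbits on $3$-subsets of $\Omega$, and in most block configurations this yields $e_3(X)\geq h(X)+1$. When it does, Theorem~\ref{reduction} produces reducibility of $V|_X$, contradicting our hypothesis.

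The main obstacle is the residual block structures for which the inequality $e_3(X)<h(X)+1$ holds and Theorem~\ref{reduction} is unavailable; this is why the conclusion of the proposition is weaker than that of Theorem~\ref{main-alt2}. Here I would invoke the assumed primitivity of $V|_X$ directly. Let $M$ be the block stabilizer, so $b:=[X:M]\geq m$ by simplicity of $\AAA_m$ acting faithfully on the blocks. Primitivity of $V|_X$ asserts that $V|_X\not\cong\Ind_M^X U$ for any $M$-module $U$; by Frobenius reciprocity this means that for every irreducible constituent $U$ of $V|_M$ the $X$-stabilizer of the isomorphism class of $U$ is strictly larger than $M$, so Clifford theory forces $V|_M$ to contain each such $U$ with controlled multiplicity and forces the orbit sums to produce a proper $X$-submodule of $V$. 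Combining these constraints with the dimension bound $\dim V\leq \sqrt{m!/2}$ and the large index $b\geq m$ yields the final contradiction, completing the proof.
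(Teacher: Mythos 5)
Your proposal diverges substantially from the paper's proof, and the place where it diverges is precisely where it breaks down. The paper does \emph{not} try to verify the hypotheses of Theorem~\ref{reduction} for a general imprimitive $X$; it instead exploits the wreath product overgroup. Since $X$ preserves a system of imprimitivity on $\Omega$ with $b:=[X:M]\geq m$ blocks of size $a:=n/b$, one has $X<Y:=(\SSS_a\wr\SSS_b)\cap\AAA_n$. Primitivity and irreducibility of $V|_X$ force $V|_Y$ to be primitive and irreducible (else the block decomposition $V=\bigoplus W^g$ for $V|_Y=\Ind^Y_H W$ would be permuted by $X$ and would exhibit $V|_X$ as induced). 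Hence for the normal subgroup $Z:=(\AAA_a)^b\lhd Y$, the restriction $V|_Z$ must be homogeneous. The paper then rules out $a\in\{2,4\}$ via $O_2(Y)>1$, bounds $n-\lambda_1$ using $\dim V\leq\sqrt{|X|}$ and \cite[Theorem 5.1]{GLT}, deduces $2\lambda_1-n\geq a\geq 3$, and invokes Proposition~\ref{two} to see that $D^\lambda|_{\SSS_a}$ has both $\triv$ and $D^{(a-1,1)}$ as composition factors. This destroys homogeneity of $V|_Z$, giving the contradiction.

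Your proposal has two genuine gaps. First, the attempt to apply Theorem~\ref{reduction} is not carried out: you say ``in most block configurations this yields $e_3(X)\geq h(X)+1$'' and then defer the ``residual block structures'' to another argument, but nothing guarantees the inequality in any concrete family of imprimitive embeddings, and no such case analysis is performed. Indeed, the reason the conclusion here is weaker than in Theorem~\ref{main-alt2} is precisely that the authors do not expect these orbit-counting hypotheses to be verifiable in general for imprimitive $X$, so this route is unlikely to succeed. Second, and more seriously, the fallback argument misapplies Clifford theory: $M$ is a maximal subgroup of $X$, not a normal one, so there is no decomposition of $V|_M$ into $X$-conjugate homogeneous components, no inertia group, and no induced-module criterion coming from Frobenius reciprocity. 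Primitivity of $V|_X$ only says $V|_X$ is not induced from any proper subgroup; the chain of consequences you draw (``the $X$-stabilizer of the isomorphism class of $U$ is strictly larger than $M$'', ``orbit sums produce a proper $X$-submodule'') does not follow and no contradiction is actually derived. The missing idea is to pass from $X$ to the overgroup $Y$ and use the genuine normal subgroup $Z=(\AAA_a)^b$ of $Y$, together with the branching result Proposition~\ref{two} to control $V|_{\AAA_a}$.
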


\begin{proof}
By Lemma~\ref{small}, we may assume that $m\geq 11$. 
Assume for a contradiction $V$ is irreducible and primitive over $X$.
Consider the action of $X$ as a subgroup of $\AAA_n$ on $\Omega$
and let $X_1$ be a point stabilizer. Since $X$ is imprimitive, there is 
a maximal subgroup $M > X_1$ of $X$. Now $b := [X:M] \geq m \geq 11$, and 
we may assume that $X < Y := (\SSS_a \wr \SSS_b) \cap \AAA_n$ for $a := n/b > 1$. 
By Lemma \ref{index}, $n \geq m(m-1)$. If $a = 2$ or $4$, then $O_2(Y) > 1$, and so 
$V$ cannot be irreducible over $Y > X$, a contradiction. So we have $a \geq 3$
and $a \neq 4$.

Let $\lambda = (\lambda_1,\lambda_2,\dots)$ be a $2$-regular partition
of $n$ such that $V$ is a simple submodule of $\Res^{\SSS_n}_{\AAA_n} D^\lambda$. 
Observe 
that $m! < (m/2)^m$ for $m \geq 6$.  
Since $\Res^{\AAA_n}_X V$ is irreducible, by \cite[Theorem 5.1]{GLT} we have
$$2^{\frac{n-\lambda_1}{2}-1} \leq \dim V \leq \sqrt{|\AAA_m|} 
  < \sqrt{\frac{m^m}{2^{m+1}}},$$
whence $n-\lambda_1 < m\log_2m-m+1$. This in turn implies that 
$2\lambda_1 -n \geq n/m$. Indeed, otherwise we would have 
$\lambda_1 < n/2+n/2m$ and so 
$$n-\lambda_1 > \frac{n}{2}-\frac{n}{2m} = n \cdot \frac{m-1}{2m}
  \geq \frac{(m-1)^2}{2} > m\log_2m-m+1,$$
a contradiction.   

We have shown that $2\lambda_1-n \geq n/m \geq n/b = a \geq 3$. Also,
$n-\lambda_1 \geq 1$ since $\dim V > 1$. Applying Proposition \ref{two}, 
we see that the restriction of $D^\lambda$ to a natural subgroup $\SSS_a$ of 
$\SSS_n$ affords both $\triv$ and $D^{(a-1,1)}$ as composition factors.
If $a \geq 5$, then $\dim D^{(a-1,1)} \geq 4$ and so any irreducible
summand of the $\AAA_a$-module $D^{(a-1,1)}$ has dimension $\geq 2$ (in fact 
$D^{(a-1,1)}$ is irreducible over $\AAA_a$). On the other hand, if $a = 3$,
then $D^{(a-1,1)}$ splits into a direct sum of two non-trivial irreducible 
$\AAA_a$-submodules. Thus $\Res^{\AAA_n}_{\AAA_a} V$ affords both the trivial module and 
also another non-trivial irreducible module as composition factors. It follows
that $\Res^{\AAA_n}_Z V$ cannot be homogeneous for 
$Z := \AAA_a \times \ldots \times \AAA_a = (\AAA_a)^b \lhd Y$. But this is a contradiction,
as $Z \lhd Y$ and the $Y$-module $V$ is primitive.
\end{proof}


The following lemma deals with tensor indecomposable irreducible representations of~$\AAA_m$. These were studied extensively in \cite{Z, BKTens0,BKTensp,GK,GJ,BKAltTens}. 

\begin{lemma}\label{tensor}
Let $X \cong \AAA_m$ be a subgroup of $\AAA_n$ with $n \geq m \geq 5$.
Let  $V$ be an irreducible $\F\AAA_n$-module of dimension $>1$. 
Assume that the $X$-module $\Res^{\AAA_n}_X V$ is irreducible and tensor indecomposable.
Then one of the following holds:
\begin{enumerate}
\item[{\rm (i)}] $X$ is a transitive subgroup of $\AAA_n$.
\item[{\rm (ii)}] There is some $t \in \{1,2\}$ such that $X$ fixes $t$ points and
acts transitively on $n-t$ remaining points of $\Omega$. Furthermore,
$V$ is irreducible over a natural subgroup $\AAA_{n-t}$ of $\AAA_n$. 
\end{enumerate}
\end{lemma}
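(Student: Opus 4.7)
The plan is to analyze the orbits of $X$ on $\Omega = \{1,2,\ldots,n\}$ and show that only two orbit structures are compatible with $V|_X$ being irreducible and tensor indecomposable: a single orbit of size $n$ (case (i)), or a single orbit of size $n-t$ together with exactly $t \in \{1,2\}$ fixed points (case (ii)). Since $X \cong \AAA_m$ is nontrivial, at least one orbit has size $\geq 2$, so the two scenarios that must be excluded are (a) two distinct $X$-orbits on $\Omega$ each of size $\geq 2$, and (b) at least three $X$-fixed points.

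For (a), let $A$ and $B$ be two $X$-orbits of size $\geq 2$. The actions of $X$ on them give nontrivial images $Y_A \leq \Sym(A)$ and $Y_B \leq \Sym(B)$; since $X$ is simple each is isomorphic to $X$, and since $X$ is perfect each lies in the corresponding alternating group. Applying Goursat's lemma to the natural map $X \to Y_A \times Y_B$ (surjective on each factor) leaves only two possibilities for the image: the full $Y_A \times Y_B$, excluded since $|Y_A \times Y_B|=|X|^2 > |X|$; or a graph of an isomorphism $Y_A \cong Y_B$. Thus $X$ sits diagonally inside $Y_A \times Y_B \leq \AAA_n$. Decomposing $V|_{Y_A \times Y_B}$ over the algebraically closed field $\F$ expresses it as a direct sum of outer tensor products $U_\alpha \boxtimes W_\alpha$; restriction of each summand to the diagonal $X$ yields the internal tensor product $U_\alpha \otimes W_\alpha$ as an $X$-module. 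Irreducibility of $V|_X$ forces a single summand, so $V|_{Y_A \times Y_B} \cong U \boxtimes W$ and $V|_X \cong U \otimes W$. Tensor indecomposability of $V|_X$, combined with the fact that every $1$-dimensional representation of the perfect group $X$ is trivial, forces one factor---say $U$---to be trivial. But then $Y_A \times \{1\} \leq \AAA_n$ acts trivially on $V$; this nontrivial subgroup would lie in the kernel of the $\AAA_n$-representation on $V$, contradicting simplicity of $\AAA_n$ (since $n \geq 5$) together with nontriviality of $V$.

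For (b), three fixed points $a,b,c$ of $X$ produce $g := (a,b,c) \in \AAA_n$ commuting with $X$, so $g$ acts on $V$ by $X$-module endomorphisms. If $p = 3$, then $(g-1)^3 = 0$, and the nonzero $X$-invariant subspace $\ker(g-1)$ must equal $V$, so $g$ acts as $\id_V$. If $p \neq 3$, then $g$ is diagonalizable on $V$ with eigenvalues among the cube roots of unity, and irreducibility of $V|_X$ forces one $X$-invariant eigenspace to exhaust $V$; so $g$ acts as a scalar $\lambda$ with $\lambda^3 = 1$. Since the conjugacy class of $3$-cycles does not split in $\AAA_n$ for $n \geq 5$, the elements $g$ and $g^{-1}$ are $\AAA_n$-conjugate, forcing $\lambda = \lambda^{-1}$, hence $\lambda = 1$. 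In either characteristic $g$ acts trivially, and by conjugation so does every $3$-cycle in $\AAA_n$; since $3$-cycles generate $\AAA_n$, this forces $V$ to be trivial, contradicting $\dim V > 1$.

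Combining these exclusions, $X$ has exactly one orbit on $\Omega$ of size $\geq 2$ (of size $n-t$) together with $t \in \{0,1,2\}$ fixed points. The case $t = 0$ gives (i). For $t \in \{1,2\}$, $X$ lies in the pointwise stabilizer of the $t$ fixed points; since $X$ is perfect, $X$ in fact lies in the natural $\AAA_{n-t}$, and irreducibility of $V|_{\AAA_{n-t}}$ is automatic from $X \leq \AAA_{n-t}$ and irreducibility of $V|_X$. The main obstacle is the Goursat--tensor-product analysis in (a), which extracts an internal tensor decomposition of $V|_X$ out of the diagonal embedding and then confronts it with tensor indecomposability.
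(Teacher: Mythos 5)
Your overall strategy matches the paper's: exclude the possibility of two disjoint ``nontrivial pieces'' by passing to a direct product containing $X$, invoke that irreducibles of a direct product over an algebraically closed field are outer tensor products, and contradict tensor indecomposability; and separately exclude $\geq 3$ fixed points via a commuting $3$-cycle. Your part (b) is correct and in fact more detailed than the paper's terse remark. However, there is a genuine gap in part (a), in the choice of $B$.

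You take $A$ and $B$ to be two $X$-orbits of size $\geq 2$ and then work with $Y_A\times Y_B\leq\AAA_n$. But if $X$ has a further orbit $C$ of size $\geq 2$ beyond $A\cup B$, then $X$ acts nontrivially on $C$, whereas every element of $Y_A\times Y_B$ fixes $C$ pointwise; so $X$ is \emph{not} a subgroup of $Y_A\times Y_B$ inside $\AAA_n$. The image of the abstract map $\pi_A\times\pi_B\colon X\to Y_A\times Y_B$ is a different subgroup $X'$ of $\AAA_n$, and the hypothesis tells you nothing about $V|_{X'}$. In particular, your claim that irreducibility of $V|_X$ forces $V|_{Y_A\times Y_B}$ to be a single outer tensor product is unjustified (and the preliminary step of ``decomposing $V|_{Y_A\times Y_B}$ as a direct sum'' already presumes a semisimplicity of the restriction that is not available in positive characteristic). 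The paper sidesteps all of this by taking the second piece to be the \emph{complement} of the first: with $\Omega^{(1)}$ a nontrivial orbit and $\Omega^{(2)}=\Omega\setminus\Omega^{(1)}$, one genuinely has $X\leq\AAA_{k}\times\AAA_{n-k}$, whence $V$ restricted to this direct product is irreducible (as it already is over the smaller $X$), hence a single outer tensor product $V_1\boxtimes V_2$, and the rest goes through. The fix to your version is the same: replace your $B$ by $\Omega\setminus A$; then $Y_B$ is the image of $X$ in $\Alt(\Omega\setminus A)$, $X\leq Y_A\times Y_B$ actually holds, the semisimplicity issue vanishes, and at that point Goursat's lemma is no longer needed (the tensor-factor description of $V|_X$ comes directly from restricting the outer tensor product along the injection $X\hookrightarrow Y_A\times Y_B$). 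As written, the argument is incomplete whenever $X$ has three or more nontrivial orbits.
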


\begin{proof}
Suppose that $X$ acts intransitively on $\Omega$. Each
non-trivial orbit of $X$ has at least $m \geq 5$ points. If $X$ has at least 
two non-trivial orbits on $X$, then we may assume that there is some 
$5 \leq k \leq n/2$ such that $X$ acts non-trivially on both 
$\Omega^{(1)} := \{1,2, \ldots ,k\}$ and $\Omega^{(2)} := \{k+1, \ldots ,n\}$. 
These actions induce embeddings $\pi_i~:~X \to \Alt(\Omega^{(i)})$, 
namely, $g \in X$ acts on $\Omega^{(i)}$ as $\pi_i(g)$.
Now, $X < Y := \AAA_k \times \AAA_{n-k}$ and
$\Res^{\AAA_n}_Y V$ is irreducible. Hence $V{|}_Y \cong V_1 \boxtimes V_2$ is an outer tensor 
product of irreducible modules $V_1$ over $\AAA_k$ and $V_2$ over  $\AAA_{m-k}$. Note 
that $\dim V_1 > 1$, as otherwise the $3$-cycles in $\AAA_k$ would act 
trivially on $V$. Similarly $\dim V_2 > 1$. It follows that
$\Res^{\AAA_n}_X V = U_1 \otimes U_2$, where 
$U_i \cong \Res^{\AAA_{k_i}}_{\pi_i(X)} V_i$ for $i=1,2$ and
$(k_1,k_2) := (k,n-k)$. In particular,
$\Res^{\AAA_n}_X V$ is tensor decomposable, contrary to the assumption.      

Thus $X$ has only one non-trivial orbit on $\Omega$, say of length $n-t$
for some $t > 0$, and $X$ fixes the remaining $t$ points. Furthermore, 
$t \leq 2$, as otherwise $X$ is centralized by a $3$-cycle and so cannot
act irreducibly on $V$.
\end{proof}

Finally, in connection with Lemma \ref{tensor}, we bound the number
of extensions of irreducible representations of $\AAA_{n-1}$ to $\AAA_n$. For this type of 
question, it is convenient to use the following simple observation:

\begin{lemma}\label{iso}
Let $Y$ be a subgroup of $X$ and $U$ be a finite-dimensional vector space over
$\F$. Let $\Psi~:~Y \to GL(U)$ be an irreducible representation. Suppose that 
$\Phi_i~:~X \to GL(U)$, $i = 1,2$, are two isomorphic representations of $X$ with
$\Res^X_Y(\Phi_i) = \Psi$. Then in fact $\Phi_1 = \Phi_2$.
\end{lemma}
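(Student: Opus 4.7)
The plan is to apply Schur's lemma directly. Since $\Phi_1$ and $\Phi_2$ are isomorphic representations of $X$ on the same space $U$, there exists $A \in GL(U)$ intertwining them, i.e.\ $\Phi_2(x) = A\,\Phi_1(x)\,A^{-1}$ for every $x \in X$. Restricting this identity to $Y$ and using the hypothesis $\Res^X_Y(\Phi_1) = \Res^X_Y(\Phi_2) = \Psi$ gives $\Psi(y) = A\,\Psi(y)\,A^{-1}$ for all $y \in Y$, so $A$ commutes with the image $\Psi(Y) \subseteq GL(U)$.

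Next I invoke the standing assumption of the paper (Section~\ref{SPrel}) that $\F$ is algebraically closed. Since $\Psi$ is irreducible and $U$ is finite-dimensional over an algebraically closed field, Schur's lemma forces $A = \lambda \cdot \id_U$ for some $\lambda \in \F^{\times}$. But a scalar matrix commutes with every element of $GL(U)$, so conjugation by $A$ is the identity transformation on $GL(U)$. Therefore $\Phi_2(x) = A\,\Phi_1(x)\,A^{-1} = \Phi_1(x)$ for all $x \in X$, i.e.\ $\Phi_1 = \Phi_2$ as desired.

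There is really no obstacle here; the statement is a clean packaging of Schur's lemma tailored to the application in Lemma~\ref{tensor}. The only point that needs to be recorded carefully is that algebraic closedness of $\F$ is what allows one to conclude that the intertwiner $A$ is a scalar rather than merely an element of the centralizer of $\Psi(Y)$ in $GL(U)$.
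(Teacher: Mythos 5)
Your proof is correct and is essentially identical to the paper's: both pass to an intertwiner $A$ with $\Phi_2 = A\Phi_1 A^{-1}$, restrict to $Y$ to see that $A$ centralizes $\Psi(Y)$, and invoke Schur's lemma (using the standing hypothesis that $\F$ is algebraically closed) to conclude $A$ is scalar and hence $\Phi_1 = \Phi_2$. Your explicit remark about where algebraic closedness enters is a harmless and accurate elaboration of what the paper leaves implicit.
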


\begin{proof}
Fix a basis of the vector space $U$ and write $\Psi$ and $\Phi_i$ as matrix representations
wit respect to this basis. By the assumption, there is some invertible matrix $A$ such that 
$\Phi_2(x) = A\Phi_1(x)A^{-1}$ for  all $x \in X$. Since both $\Phi_i$ extend $\Psi$,
we have that $\Psi(y) = A\Psi(y)A^{-1}$ for all $y \in Y$. By Schur's lemma, $A$ is scalar, and so $\Phi_2 = \Phi_1$.
\end{proof}

Now we deal with the aforementioned question for symmetric groups:

\begin{lemma}\label{ext-sym}
Assume $n \geq 3$ and fix a natural embedding of $\SSS_{n-1}$ into $\SSS_n$. 
Then every irreducible $\F\SSS_{n-1}$-representation 
$\SSS_{n-1} \to GL(U)$ has at most one extension to $\SSS_n$.
\end{lemma}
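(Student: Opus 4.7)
The plan is to combine Lemma~\ref{iso} with the branching theory for symmetric groups. First I would observe that any extension $\Phi:\SSS_n\to GL(U)$ of $\Psi$ is automatically irreducible, since any $\SSS_n$-invariant subspace of $U$ is a fortiori $\SSS_{n-1}$-invariant, hence equal to $0$ or $U$ by irreducibility of $\Psi$. Thus if $\Phi_1,\Phi_2$ are two extensions of $\Psi$, writing $\Phi_i\cong D^{\lambda_i}$ for $p$-regular partitions $\lambda_i\vdash n$ and $\Psi\cong D^{\mu}$, the goal reduces via Lemma~\ref{iso} to proving $\lambda_1=\lambda_2$, given that $\Res^{\SSS_n}_{\SSS_{n-1}} D^{\lambda_i}\cong D^{\mu}$ is irreducible for $i=1,2$.

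This last uniqueness is the content of the branching theorem. In characteristic zero, $\Res S^{\lambda}$ decomposes as the direct sum of the $S^{\mu}$ over partitions $\mu$ obtained from $\lambda$ by removing one node, so irreducibility of $\Res S^{\lambda_i}$ forces $\lambda_i$ to be a rectangle uniquely determined by $\mu$. In positive characteristic the analogous fact is Kleshchev's modular branching rule, according to which $\Res D^{\lambda}$ is simple iff $\lambda$ has a unique good removable node, and at most one $p$-regular $\lambda\vdash n$ with $\Res D^{\lambda}$ simple and isomorphic to a prescribed $D^{\mu}$ exists.

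A conceptually transparent way to see this uniqueness is through the Jucys--Murphy element $L_n:=\sum_{i<n}(i,n)\in\F\SSS_n$, which centralizes $\SSS_{n-1}$ and hence, by Schur's lemma applied to the irreducible $\Psi$, acts on $\Phi_i$ as a scalar $c_i\,\id_U$. Taking traces and using that $(i,n)$ is $\SSS_n$-conjugate to the transposition $(1,2)\in\SSS_{n-1}$ yields $c_i\dim U=(n-1)\,\trace\Psi((1,2))$, so $c_1=c_2$ is determined by $\Psi$ alone and identifies the residue modulo $p$ of the node added to $\mu$ to form $\lambda_i$; together with the branching rule this forces $\lambda_1=\lambda_2$, after which Lemma~\ref{iso} concludes $\Phi_1=\Phi_2$. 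The main technical obstacle is the degenerate subcase $p\mid\dim U$, where the trace identity becomes vacuous and one must appeal directly to the uniqueness contained in Kleshchev's branching theorem (which holds uniformly in all characteristics).
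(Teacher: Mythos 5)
Your proof takes essentially the same route as the paper. Both reduce, via Lemma~\ref{iso}, to showing that there is at most one $p$-regular $\lambda\vdash n$ with $\Res^{\SSS_n}_{\SSS_{n-1}}D^\lambda$ irreducible and isomorphic to a given $D^\mu$; the paper establishes this by quoting \cite[Corollary~5.3]{K4}, and the assertion you make in your second paragraph (``at most one $p$-regular $\lambda\vdash n$ with $\Res D^\lambda$ simple and isomorphic to a prescribed $D^\mu$ exists'') is precisely that corollary, stated without a reference. So the core of the argument is the same citation to Kleshchev's branching theory.

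The Jucys--Murphy discussion is a pleasant heuristic but, as you yourself note, it is not a self-contained alternative. In the non-degenerate case $p\nmid\dim U$ the trace computation $c_i\dim U=(n-1)\operatorname{trace}\Psi((1,2))$ does pin down the common scalar $c_1=c_2$, and hence the residue of the added node; but you still need the branching theory to convert ``same residue'' into ``same $\lambda$'' (uniqueness of the cogood addable node of each residue). And in the case $p\mid\dim U$ the trace identity is vacuous and you fall back entirely on the branching uniqueness, i.e.\ on \cite[Corollary~5.3]{K4}. So the Jucys--Murphy elaboration neither replaces nor shortens the key citation; if you want to keep it, it would be cleaner to present it as a remark explaining how the residue of the added node is detected, after first giving the short proof via \cite[Corollary~5.3]{K4} as the paper does.
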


\begin{proof}
Consider $U$ as an $\F\SSS_{n-1}$-module, and suppose that there are 
two distinct $p$-regular partitions $\lambda,\mu \vdash n$ such that 
$$\Res^{\SSS_n}_{\SSS_{n-1}}(D^\lambda) \cong  
    \Res^{\SSS_n}_{\SSS_{n-1}}(D^\mu) \cong U.$$ 
Since 
$\soc(\Res^{\SSS_n}_{\SSS_{n-1}}(D^\lambda)) \cong  
  \soc(\Res^{\SSS_n}_{\SSS_{n-1}}(D^\mu))$, 
by \cite[Corollary 5.3]{K4}, precisely one of the modules 
$\Res^{\SSS_n}_{\SSS_{n-1}}(D^\lambda)$, 
$\Res^{\SSS_n}_{\SSS_{n-1}}(D^\mu)$ is reducible, a contradiction. Now apply 
Lemma \ref{iso}.
\end{proof}

\begin{proposition}\label{ext-alt}
Assume $n \geq 5$ and fix a natural embedding of $\AAA_{n-1}$ into $\AAA_n$. 
Then every irreducible $\F\AAA_{n-1}$-representation 
$\AAA_{n-1} \to GL(U)$ has at most three distinct extensions to $\AAA_n$.
\end{proposition}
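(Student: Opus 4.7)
The plan is to reduce the counting to Lemma~\ref{ext-sym} via Clifford theory for $\AAA_n \triangleleft \SSS_n$ together with a Mackey-formula identity that pins down the $\SSS_{n-1}$-restriction of the induced $\SSS_n$-module. By Lemma~\ref{iso}, isomorphic extensions coincide as representations, so it suffices to bound the number of non-isomorphic extensions; by Frobenius reciprocity, each extension $V$ appears with multiplicity one in the head of $\Ind^{\AAA_n}_{\AAA_{n-1}}U$, so distinct extensions yield distinct simple quotients. The first key observation is that the double-coset decomposition $\SSS_n = \AAA_n\cdot\SSS_{n-1}$ with $\AAA_n \cap \SSS_{n-1}=\AAA_{n-1}$ gives, for every extension $V$, the Mackey identity
$$\Res^{\SSS_n}_{\SSS_{n-1}}\Ind^{\SSS_n}_{\AAA_n}V \;\cong\; \Ind^{\SSS_{n-1}}_{\AAA_{n-1}}U \;=:\; M,$$
a fixed $\F\SSS_{n-1}$-module depending only on $U$.

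I would then split according to whether $U$ is $\SSS_{n-1}$-invariant. If $U = E^\mu_\pm$ is \emph{not} invariant, then $M = D^\mu$ is irreducible, so $W := \Ind^{\SSS_n}_{\AAA_n}V$ has $\dim W = \dim M$ and must itself be irreducible; Lemma~\ref{ext-sym} then determines $W = D^\lambda$ uniquely, and precisely one of $E^\lambda_\pm$ restricts to $U$ (the other to the non-isomorphic conjugate $U^t$), giving at most one extension. If $U = E^\mu$ is invariant, then $M$ has composition factors $D^\mu$ and $D^{\mu^\MB}$ each with multiplicity one, and extensions split into two flavours. For $V = E^\lambda$ with irreducible restriction, the equality $V|_{\AAA_{n-1}} = U$ forces $D^\lambda|_{\SSS_{n-1}}$ itself to be irreducible (else its $\AAA_{n-1}$-restriction would be reducible) and equal to $D^\mu$ or $D^{\mu^\MB}$; Lemma~\ref{ext-sym} together with $E^\lambda = E^{\lambda^\MB}$ yields at most one iso class. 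For $V = E^\lambda_\pm$ with split restriction, $\SSS_{n-1}$-invariance of $U$ forces $D^\lambda|_{\AAA_{n-1}} = 2U$, so both $E^\lambda_+$ and $E^\lambda_-$ restrict to $U$, contributing two extensions per such $\lambda$. In characteristic $2$ the splitting criterion of \cite[Theorem~1.1]{B} plays the role of $\lambda\ne \lambda^\MB$ throughout.

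The remaining and main obstacle is to show that at most one split-type $\lambda$ can occur: two of them would give distinct irreducible $\SSS_n$-modules $W_i = D^{\lambda_i}$ with identical restriction $M$, and since $M$ is reducible Lemma~\ref{ext-sym} does not apply directly. The dimension identity $\dim D^{\lambda_i} = 2\dim D^\mu$ combined with the presence of $D^\mu$ and $D^{\mu^\MB}$ in the socle forces $D^{\lambda_i}|_{\SSS_{n-1}}$ to be semisimple, equal to $D^\mu \oplus D^{\mu^\MB}$. Combined with self-Mullineux-duality $\lambda_i = \lambda_i^\MB$ (which one deduces from $D^{\lambda_i}\otimes\sgn_n = \Ind^{\SSS_n}_{\AAA_n}(V_i\otimes\sgn|_{\AAA_n})=\Ind V_i = D^{\lambda_i}$), classical branching in characteristic zero identifies $\lambda_i$ uniquely as the partition whose Young diagram is the set-theoretic union of those of $\mu$ and $\mu^\MB$; in modular characteristic, one completes the argument using Kleshchev's description of $\soc(D^{\lambda_i}|_{\SSS_{n-1}})$ via the crystal operators $\tilde e_i$ and the multiplicity-freeness of the resulting socle. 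Granting this uniqueness, the invariant case contributes at most $1 + 2 = 3$ extensions, which together with the bound $1$ in the non-invariant case gives the desired total bound of~$3$.
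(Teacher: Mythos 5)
Your overall strategy runs parallel to the paper's: reduce to $\SSS_n$-modules via Clifford theory, use Lemma~\ref{iso} to count up to isomorphism, and lean on Lemma~\ref{ext-sym} to compare induced/extended $\SSS_n$-modules through their $\SSS_{n-1}$-restrictions.  The trichotomy you arrive at (one extension when $U$ is not $\SSS_{n-1}$-invariant; at most one $\SSS_n$-invariant extension plus at most one ``split'' $\lambda$ giving two non-invariant extensions when $U$ is $\SSS_{n-1}$-invariant) is exactly the paper's Cases I, IIa, IIb, and your Mackey identity $\Res^{\SSS_n}_{\SSS_{n-1}}\Ind^{\SSS_n}_{\AAA_n}V\cong\Ind^{\SSS_{n-1}}_{\AAA_{n-1}}U$ is a clean way to package what the paper does more by hand.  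Cases I and IIa are essentially correct as you have them.

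However, there is a genuine gap in the crucial step you label ``the remaining and main obstacle'', and you concede it yourself by writing ``Granting this uniqueness''.  You claim that for a split-type $\lambda$ one can pin $\lambda$ down from $\lambda=\lambda^{\MB}$ together with $D^\mu\subseteq\soc(D^\lambda|_{\SSS_{n-1}})$ by appealing to the classical (characteristic-zero) branching rule, and ``in modular characteristic, one completes the argument using Kleshchev's description of the socle via crystal operators and multiplicity-freeness''.  This is not a proof: for a fixed $\mu$ there may a priori be several residues $a$ with $\tilde f_a\mu$ defined and $p$-regular, so multiplicity-freeness of $\soc(D^\lambda|_{\SSS_{n-1}})$ together with $\lambda=\lambda^{\MB}$ does not by itself determine $\lambda$.  (Also, the intermediate dimension/socle reasoning you sketch quietly assumes $p\neq 2$, where $E^\lambda_\pm$ and the semisimplicity argument behave as you describe; in characteristic $2$, where $\sgn=\triv$ and $\mu=\mu^{\MB}$ automatically, the reasoning has to be recast — you gesture at Benson's criterion but do not carry it through.)  The paper closes precisely this gap by invoking \cite[Corollary~5.3]{K4}: if $\lambda_1\neq\lambda_2$ are $p$-regular with $\soc(\Res^{\SSS_n}_{\SSS_{n-1}}D^{\lambda_1})\cong\soc(\Res^{\SSS_n}_{\SSS_{n-1}}D^{\lambda_2})$, then exactly one of the two restrictions is irreducible.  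Since in your split case both $W_i:=\Ind^{\SSS_n}_{\AAA_n}V_i$ restrict to the same reducible module $\Ind^{\SSS_{n-1}}_{\AAA_{n-1}}U$, this forces $W_1\cong W_2$, hence $V_2\in\{V_1,V_1^g\}$.  You need to cite and use this dichotomy (or an equivalent) explicitly rather than leaving the uniqueness as an assumption; without it the bound of three is not established.
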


\begin{proof}
We assume that $U$ extends to an $\F\AAA_n$-module $V_1$ and find all possible 
other extensions $V_2$ of $U$ to $\AAA_n$. Consider $\AAA_{n-1}$ as the 
derived subgroup of a natural subgroup $\SSS_{n-1}$ of $\SSS_n$. 
By Lemma \ref{iso}, it suffices to bound the number of extensions up to isomorphism.
For each $i = 1,2$, we can find an irreducible $\F\SSS_n$-module $W_i$ such that 
$V_i \hookrightarrow \soc(\Res^{\SSS_n}_{\AAA_n}(W_i))$, and an irreducible 
$\F\SSS_{n-1}$-module  $T$ such that 
$U \hookrightarrow \soc(\Res^{\SSS_{n-1}}_{\AAA_{n-1}}(T))$. We also fix a transposition
$g \in \SSS_{n-1} \setminus \AAA_{n-1}$, and distinguish 
the following cases.

\smallskip
{\bf Case I}: $U$ is not $\SSS_{n-1}$-invariant. Then for $i = 1,2$ 
 we have that
$$\Res^{\AAA_{n}}_{\AAA_{n-1}}(V_i^g) = U^g \not\cong U = 
    \Res^{\AAA_{n}}_{\AAA_{n-1}}(V_i),$$
and so $V_i$ is not $\SSS_n$-invariant. Hence  
$\Res^{\SSS_{n}}_{\AAA_{n}}(W_i) \cong V_i \oplus V_i^g$, and similarly
$\Res^{\SSS_{n-1}}_{\AAA_{n-1}}(T) \cong U \oplus U^g$. Now
$\Res^{\SSS_{n}}_{\AAA_{n-1}}(W_i) \cong U \oplus U^g \cong 
  \Res^{\SSS_{n-1}}_{\AAA_{n-1}}(T)$. It follows that
$\Res^{\SSS_{n}}_{\SSS_{n-1}}(W_1) \cong T \cong 
  \Res^{\SSS_{n}}_{\SSS_{n-1}}(W_2)$. By Lemma \ref{ext-sym},
$W_1 \cong W_2$, whence $V_2$ is isomorphic to $V_1$ or $V_1^g$. Since 
$\Res^{\AAA_{n}}_{\AAA_{n-1}}(V_1^g) = U^g \not\cong U$, we must have that 
$V_2 \cong V_1$. 

\smallskip
{\bf Case IIa}: $U$ is  $\SSS_{n-1}$-invariant and both $V_1,V_2$ are $\SSS_n$-invariant.
In particular, we have $\Res^{\SSS_{n}}_{\AAA_{n}}(W_i) = V_i$ for $i = 1,2$ and similarly 
$\Res^{\SSS_{n-1}}_{\AAA_{n-1}}(T) = U$. Now if $p = 2$, then the Brauer
character of any extension of $U$ to $\SSS_{n-1}$ is uniquely determined by its restriction
to $\AAA_{n-1}$ which is the Brauer character of $U$. Thus $U$ has a unique extension
to $\SSS_{n-1}$, and so 
$\Res^{\SSS_{n}}_{\SSS_{n-1}}(W_1) \cong \Res^{\SSS_{n}}_{\SSS_{n-1}}(W_2)$. It follows
by Lemma \ref{ext-sym} that $W_1 \cong W_2$ and so $V_1 \cong V_2$.  

Next suppose that $p \neq 2$. Then $U$ has two distinct extensions $T$ and $T^g$
to $\SSS_{n-1}$ and $V_i$ has two distinct extensions $W_i$ and $W_i^g$
to $\SSS_n$. It follows that $\Res^{\SSS_{n}}_{\SSS_{n-1}}(W_i) \in \{T,T^g\}$ and so 
$\Res^{\SSS_{n}}_{\SSS_{n-1}}(W_2) \in \{\Res^{\SSS_{n}}_{\SSS_{n-1}}(W_1), 
    \Res^{\SSS_{n}}_{\SSS_{n-1}}(W_1^g)\}$. 
By Lemma \ref{ext-sym}, $W_2 \cong W_1$ or $W_1^g$, and so $V_2 \cong V_1$.

Thus Case IIa shows that among the extensions of $U$ to 
$\AAA_n$, at most one of them is $\SSS_n$-invariant.

\smallskip
{\bf Case IIb}: $U$ is  $\SSS_{n-1}$-invariant, but neither $V_1$ nor $V_2$ is 
$\SSS_n$-invariant. In this case for $i = 1,2$ we have that
$W_i = \Ind^{\SSS_n}_{\AAA_n}(V_i)$, and so 
$$\Res^{\SSS_{n}}_{\SSS_{n-1}}(W_i) \cong 
    \Ind^{\SSS_{n-1}}_{\AAA_{n-1}}(\Res^{\AAA_{n}}_{\AAA_{n-1}}(V_i))
    \cong \Ind^{\SSS_{n-1}}_{\AAA_{n-1}}(U)$$
is reducible. Also, 
$\soc(\Res^{\SSS_{n}}_{\SSS_{n-1}}(W_1)) \cong 
  \soc(\Res^{\SSS_{n}}_{\SSS_{n-1}}(W_2))$. 
If moreover $W_1 \not\cong W_2$, then the latter 
implies by \cite[Corollary 5.3]{K4} that (precisely) one of the modules 
$\Res^{\SSS_{n}}_{\SSS_{n-1}}(W_i)$, $i = 1,2$, is irreducible, a contradiction. It follows that
$W_1 \cong W_2$ and so $V_2 \in \{V_1, V_1^g\}$ since 
$\Res^{\SSS_{n}}_{\AAA_{n}}(W_1) = V_1 \oplus V_1^g$.   

Thus Case IIb shows that among the extensions of $U$ to 
$\AAA_n$, at most two of them can be non-$\SSS_n$-invariant. Hence in the case where $U$ is $\SSS_{n-1}$-invariant, there are at most three extensions to $\AAA_n$. 
\end{proof}

\section{Rank 3 permutation groups}
To illustrate applicability of Theorem \ref{reduction} to other primitive subgroups of $\SSS_n$,
in this section we consider finite simple classical groups $X$ acting as rank $3$ permutation groups
on $\Omega$, where $(\Omega,X)$ is one of the following:
\begin{enumerate}
\item[\rm (i)] $\Omega$ is the set of $2$-dimensional subspaces of $W = \F_q^d$
and $X = PSL(W) = PSL_d(q)$ with $d \geq 4$;

\item[\rm(ii)] $\Omega$ is the set of singular $1$-dimensional subspaces
of $W = \F_q^d$ and $X = PSp(W)'= PSp_d(q)'$ with $2|d \geq 4$ (note that
$Sp_d(q)' \neq Sp_d(q)$ only when $(d,q) = (4,2)$);

\item[\rm(iii)] $\Omega$ is the set of singular $1$-dimensional subspaces
of $W = \F_q^d$ and $X = P\Omega(W)= P\Omega^\pm_d(q)$ with $d \geq 5$;

\item[\rm(iv)] $\Omega$ is the set of singular $1$-dimensional subspaces
of $W = \F_{q^2}^d$ and $X = PSU(W)= PSU_d(q)$ with $d \geq 4$.
\end{enumerate}
According to the main result of \cite{KaL}, these families account for all the standard rank $3$ 
permutation representations of finite simple classical groups.

\begin{lemma}\label{r3-orbits}
Let $X < \Sym(\Omega) = \SSS_n$, where $(\Omega,X)$ is as listed above, and $2|n$. 
\begin{enumerate}
\item[{\rm (i)}] Assume furthermore that $d \geq 6$ when 
$X = PSp_d(q)$ or $X = PSU_d(q)$, and $d \geq 7$ when $X = P\Omega^\pm_d(q)$. Then 
$f_3(X) \geq 6$. 

\item[{\rm (ii)}] Also, $f_3(X) \geq 5$ if $X = PSp_4(q)$, $PSU_4(q)$, $PSU_5(q)$,
$\Omega_5(q)$, or $P\Omega^\pm_6(q)$.
\end{enumerate}
\end{lemma}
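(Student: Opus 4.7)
My approach is case analysis on the family of $X$, exhibiting enough explicit $X$-orbits on $\Omega_3$ to meet the stated bounds. The key tool throughout is the Witt extension theorem for the relevant classical group, which implies that two triples $\{\omega_1,\omega_2,\omega_3\}$ and $\{\omega_1',\omega_2',\omega_3'\}$ lie in the same $X$-orbit if and only if there is an isometry of spans $\omega_1+\omega_2+\omega_3 \to \omega_1'+\omega_2'+\omega_3'$ (for the form inherited from $W$, if any) sending one triple onto the other up to permutation. Thus $f_3(X)$ is bounded below by the number of isomorphism types of such configurations realizable in $W$.

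For $X = PSL_d(q)$ with $d \geq 4$, where the $\omega_i$ are $2$-spaces, the relevant invariants are the multiset $\{\dim(\omega_i\cap\omega_j)\}_{i<j}\subseteq\{0,1\}$, the span dimension, and---when all pairwise intersections are lines---whether these three lines coincide. I will list six configurations realizable already when $d=4$: three $2$-spaces through a common line inside a common $3$-space; the same but spanning a $4$-space; three $2$-spaces whose pairwise intersections are three distinct coplanar lines; two intersecting pairs with the third pair disjoint; a single intersecting pair with the third $2$-space disjoint from both; and three pairwise-disjoint $2$-spaces inside a $4$-space. These are mutually distinguished by the stated invariants.

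For the remaining families in (i), the $\omega_i$ are singular $1$-spaces, and the invariants become $\dim U$ (with $U := \omega_1+\omega_2+\omega_3 \in \{2,3\}$), the isometry class of the induced form on $U$, and the placement of the $\omega_i$ within $U$. The bounds $d\geq 6$ in the symplectic/unitary cases and $d \geq 7$ in the orthogonal case ensure that the Witt index of $W$ is large enough to accommodate a totally singular $3$-space together with each expected $3$-dimensional span of the various degeneracy types. I will exhibit six configurations: three collinear singular points on a totally singular line; three collinear singular points on a non-degenerate $2$-space; three non-collinear points inside a totally singular $3$-space; and three non-collinear configurations where $U$ is $3$-dimensional with rank-$2$ restricted form, distinguished by the pattern of edge types among the three pairs $\{\omega_i,\omega_j\}$.

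Finally, for (ii), the five small cases $PSp_4(q)$, $PSU_4(q)$, $PSU_5(q)$, $\Omega_5(q)$, and $P\Omega^\pm_6(q)$ are verified individually. The main losses compared with (i) are the absence of a totally singular $3$-space in $PSp_4$ and $\Omega_5$, and of a few $3$-dimensional spans in the others; since (ii) only requires $5$ orbits, the remaining configurations from the list above suffice in each case. The main obstacle throughout will be the careful case-checking required to distinguish configurations whose $U$ has a degenerate $3$-dimensional restricted form, and to split the orthogonal groups by type ($O^+$, $O^-$, or odd-dimensional) according to the isometry class of $W$.
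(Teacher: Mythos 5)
Your overall strategy matches the paper's: reduce the orbit count to counting isometry types of configurations of the relevant subspaces, and then exhibit explicit representatives. For $X=PSL_d(q)$ your six configurations are exactly the right ones and agree with the paper's list (span $3$ with and without common triple line; span $4$ with intersection patterns $\{0,0,0\}$, $\{1,0,0\}$, $\{1,1,0\}$, and the common-line case). That part is fine.

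The gap is in your explicit list for the remaining families. You give six configurations of singular $1$-spaces: totally singular line; non-degenerate $2$-space; totally singular $3$-space; and three configurations with $U$ of dimension $3$ and radical of dimension $1$, ``distinguished by the pattern of edge types.'' This list works only for $Sp_d(q)$, where every $1$-space is isotropic. In the unitary and orthogonal cases the singular points in a $3$-space $U$ with $\dim\rad(U)=1$ are forced to lie over the (at most two) singular lines of the hyperbolic plane $U/\rad(U)$, and a short computation shows that only two non-collinear edge-type patterns are realizable there (one of the $\omega_i$ equals $\rad(U)$, or none does and exactly one pair is perpendicular); the third pattern you are counting on does not exist. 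Worse, in the orthogonal case there is no non-degenerate $2$-space containing three singular points (a non-degenerate orthogonal $2$-space has at most two), so your second configuration disappears as well. Your list therefore yields only four orbits for $P\Omega^\pm_d(q)$ and five for $PSU_d(q)$, short of the required six. The missing ingredient, which the paper uses, is the case where $U$ is a $3$-dimensional \emph{non-degenerate} subspace: this exists precisely in the unitary and orthogonal cases, and for orthogonal groups it contributes \emph{two} orbits distinguished by the discriminant of the restricted form. That is how the paper reaches six in the $SU$ and $\Omega$ cases (and it is why the symplectic case instead needs the extra rank-$2$ pattern, since a $3$-dimensional symplectic subspace is always degenerate). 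The same omission undermines your sketch of part (ii): for $\Omega_5(q)$ your surviving configurations only give four orbits, whereas the two non-degenerate $3$-space orbits are again essential to reach five.
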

  
\begin{proof}
(a) First we consider the case $X = PSL(W)$ with $W = \langle e_1, \ldots,e_d \rangle_{\F_q}$ 
of dimension $d \geq 4$. Then $X$ has at least six orbits on $\Omega_3$, 
with the following representatives:

$\bullet$ $A = \la e_1,e_2\ra_{\F_q}$, $B= \la e_1,e_3\ra_{\F_q}$, $C = \la e_2,e_3\ra_{\F_q}$
(note that $\dim_{\F_q}(A+B+C) = 3$ and $A \cap B \cap C = 0$ here);

$\bullet$ $A = \la e_1,e_2\ra_{\F_q}$, $B= \la e_1,e_3\ra_{\F_q}$, $C = \la e_1,e_2+e_3\ra_{\F_q}$
(here, $\dim_{\F_q}(A+B+C) = 3$ and $A \cap B \cap C \neq 0$);

$\bullet$ $A = \la e_1,e_2\ra_{\F_q}$, $B= \la e_3,e_4\ra_{\F_q}$, $C = \la e_1+e_3,e_2+e_4\ra_{\F_q}$ (here, $\dim_{\F_q}(A+B+C) = 4$ and 
$\{\dim_{\F_q}(A \cap B),\dim_{\F_q}(A \cap C),\dim_{\F_q}(B \cap C) = \{0,0,0\}$);

$\bullet$ $A = \la e_1,e_2\ra_{\F_q}$, $B= \la e_1,e_3\ra_{\F_q}$, $C = \la e_2+e_3,e_4\ra_{\F_q}$
(here, $\dim_{\F_q}(A+B+C) = 4$ and 
$\{\dim_{\F_q}(A \cap B),\dim_{\F_q}(A \cap C),\dim_{\F_q}(B \cap C) = \{1,0,0\}$);

$\bullet$ $A = \la e_1,e_2\ra_{\F_q}$, $B= \la e_1,e_3\ra_{\F_q}$, $C = \la e_2,e_4\ra_{\F_q}$ 
(here, $\dim_{\F_q}(A+B+C) = 4$ and 
$\{\dim_{\F_q}(A \cap B),\dim_{\F_q}(A \cap C),\dim_{\F_q}(B \cap C) = \{1,1,0\}$);

$\bullet$ $A = \la e_1,e_2\ra_{\F_q}$, $B= \la e_1,e_3\ra_{\F_q}$, $C = \la e_1,e_4\ra_{\F_q}$
(here, $\dim_{\F_q}(A+B+C) = 4$ and $\dim_{\F_q}(A \cap B \cap C) = 1$).

\smallskip
(b) In the remaining cases of (i), the assumption on $d$ implies that $W$ contains a 
non-degenerate $6$-dimensional subspace with hyperbolic (Witt) basis $(e_1,e_2,e_3,f_1,f_2,f_3)$.
Also, since $n = |\Omega|$ is even, $q$ is odd.
We will write any element in $\Omega$ as $\la a \ra$, the $1$-space generated by a vector $a \in W$.
In the $\Omega$-case, for any unordered triple $\pi = \{A,B,C\}$, with 
$A = \la a \ra$, $B = \la b \ra$, $C = \la c \ra \in \Omega$
and $\dim (A+B+C) = 3$, we can associate to it the Gram matrix $\Gamma$ of the bilinear form 
$(\cdot,\cdot)$ written in the basis $(a,b,c)$. Since changing to another basis
of $A+B+C$ changes $\det(\Gamma)$ by a factor which belongs to the 
subgroup $F_0:= \F_q^{\times 2}$ of $F := \F_q^\times$,
we can associate to such $\pi$ a canonical element $\delta := \det(\Gamma)F_0 \in F/F_0$.  
Then the following unordered triples $\{A,B,C\} \in \Omega_3$  belong to disjoint $X$-orbits:

$\bullet$ $A = \la e_1 \ra$, $B= \la e_2 \ra$, $C = \la e_1 + e_2\ra$
(note that $A+B+C$ is a $2$-dimensional totally singular subspace here);

$\bullet$ In the $Sp/SU$-case: 
$A = \la e_1 \ra$, $B= \la f_1 \ra$, $C = \la e_1 + \lambda f_1\ra$,
where $\lambda = 1$ in the $Sp$-case and $\lambda^{q-1} = -1$ in the $SU$-case.
Note that $A+B+C$ is a $2$-dimensional non-degenerate subspace here;

$\bullet$ $A = \la e_1 \ra$, $B= \la e_2 \ra$, $C = \la e_3\ra$
(here, $A+B+C$ is a $3$-dimensional totally singular subspace);

$\bullet$ $A = \la e_1 \ra$, $B= \la f_1 \ra$, $C = \la e_2\ra$.
Note that $U := A+B+C$ is a $3$-dimensional subspace with $C = \rad(U)$;

$\bullet$ $A = \la e_1 \ra$, $B= \la f_1 \ra$, $C = \la e_1+ e_2\ra$.
Note that $U := A+B+C$ is a $3$-dimensional subspace with $\dim\rad(U) = 1$
but $A,B,C \neq \rad(U)$. In fact, in the $Sp$-case, we get one more
triple with the same $A$, $B$, but with $C' = \la e_1+f_1+e_2 \ra$ --
note that $A \perp C$, but no two of $A$, $B$, $C'$ are orthogonal to each other;

$\bullet$ In the $\Omega/SU$-case: 
$A = \la e_1 \ra$, $B= \la f_1 \ra$, $C = \la e_2+ \lambda f_2 + e_1 - \lambda f_1\ra$,
where $\lambda \in F$ in the $\Omega$-case and $\lambda = 1$ in the $SU$-case. 
Here, $W := A+B+C$ is a $3$-dimensional non-degenerate subspace. 
Furthermore, in the $\Omega$-case, we have $\delta = -2\lambda F_0$, whence we can choose 
$\lambda$ so that $\delta = F_0$, respectively $\delta \neq F_0$.

\smallskip
(c) Ignoring the vectors $e_3,f_3$, the arguments in (b) also show that
$f_2(X) \geq 5$ if $X = PSp_4(q)$, $PSU_4(q)$, $PSU_5(q)$, 
$\Omega_5(q)$, or $\Omega^\pm_6(q)$.
\end{proof}  
 
\begin{lemma}\label{r3-h1}
Let $X < \Sym(\Omega) = \SSS_n$, where $(\Omega,X)$ is as listed above, that is, 
$d \geq 4$ when $X = PSL_d(q)$, $PSU_d(q)$, or $PSp_d(q)$, and 
$d \geq 5$ when $X = P\Omega^\pm_d(q)$. Suppose that $p =2 |n$. Then $h(X) \leq 3$ if 
$X = P\Omega^+_{d}(q)$ with $4|d \geq 8$, and $h(X) \leq 2$ otherwise. 
\end{lemma}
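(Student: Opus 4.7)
The plan is to bound $h(X)=\dim H^1(X,Q)$ by analyzing the long exact cohomology sequence associated to (\ref{ses1}) and estimating the two resulting terms.

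Since $X$ is transitive on $\Omega$, we have $M_1^X=\F\cdot T_1\cong\F$, and the canonical map $H^0(X,\F)\to H^0(X,M_1)$ is an isomorphism. As $X$ is a non-abelian finite simple group, $\Hom(X,(\F,+))=0$ and hence $H^1(X,\F)=0$. The long exact sequence associated to $0\to\F\cdot T_1\to M_1\to Q\to 0$ therefore collapses to an exact sequence
$$
0\to H^1(X,M_1)\to H^1(X,Q)\to H^2(X,\F),
$$
whence $h(X)\leq \dim H^1(X,M_1)+\dim H^2(X,\F).$

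By Shapiro's lemma, $H^1(X,M_1)\cong H^1(X_\omega,\F)=\Hom(X_\omega,(\F,+))$ for a point stabilizer $X_\omega$. In each of the four families in the statement, $X_\omega$ is (the image in $X$ of) a maximal parabolic subgroup $P=UL$ of the corresponding universal classical group, with $U$ the unipotent radical and $L$ a Levi complement. Since $[P,P]\supseteq U[L,L]$, the group $\Hom(X_\omega,(\F,+))$ is a quotient of $\Hom(L/[L,L],(\F,+))$. A case-by-case inspection shows that $L/[L,L]$ is cyclic in every case: for $SL_d(q)$ acting on $2$-spaces, the Levi is $(GL_2\times GL_{d-2})\cap SL_d$, whose abelianization is cyclic of order dividing $q-1$ (via the determinant of the $GL_2$-factor, constrained to equal the inverse of the other determinant); for $Sp_d(q)$, $\Omega^\pm_d(q)$, and $SU_d(q)$ at a singular line, $L$ is (up to the center) a product $GL_1(q^\epsilon)\times K$ with $K$ a quasisimple classical group of smaller rank (hence perfect under our rank assumptions), so that $L/[L,L]$ is a cyclic quotient of $GL_1(q^\epsilon)$, where $\epsilon=1$ or $2$. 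As $\Hom(C,(\F,+))$ is at most one-dimensional over $\F$ for any cyclic group $C$ (being $\F$ if $|C|$ is even and $0$ otherwise), we obtain $\dim\Hom(X_\omega,(\F,+))\leq 1$.

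For the second term, $H^2(X,\F)\cong\Hom(M(X)_2,(\F,+))$, where $M(X)_2$ denotes the $2$-part of the Schur multiplier of the finite simple group $X$. The standard tables (see \cite[Table 5.1.D]{KL}) show that $M(X)_2$ is cyclic of order at most $2$ in every case under consideration, with the sole exception $X=P\Omega^+_d(q)$ with $4\mid d\geq 8$ and $q$ odd, in which case $M(X)_2\cong(\Z/2)^2$ (reflecting the existence of two independent half-spin central extensions when $d/2$ is even). Hence $\dim H^2(X,\F)\leq 1$ in general, and $\leq 2$ in the exceptional orthogonal case. Combining the estimates yields $h(X)\leq 2$ in all cases and $h(X)\leq 3$ in the exceptional case. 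The main technical hurdle is the uniform verification that $L/[L,L]$ is cyclic across all four families; once this is in hand, the remainder is a routine application of the cohomology long exact sequence and the known classification of Schur multipliers of finite simple classical groups.
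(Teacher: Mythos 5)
Your proposal follows essentially the same route as the paper's proof: the long exact sequence in cohomology arising from $0 \to \F\cdot T_1 \to M_1 \to Q \to 0$, Shapiro's lemma (equivalently, Frobenius reciprocity) to identify $H^1(X,M_1)$ with $\Hom(X_1,(\F,+))$ for a point stabilizer $X_1$, and the known $2$-parts of the Schur multipliers of the finite simple classical groups to bound $\dim H^2(X,\F)$. One point you leave implicit but should state, as the paper does at the start of its proof: the hypothesis $p=2\mid n$ forces $q$ to be odd, since in each of the four families the degree $n$ is a ratio of products of terms $q^k\pm 1$ that are all odd when $q$ is even. This fact is doing real work. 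It guarantees that the unipotent radical of the parabolic $X_1$ has odd order, hence admits no non-trivial homomorphism to $(\F,+)$, and that the small-rank classical constituents of the Levi (such as $SL_2(q)$, $SU_{d-2}(q)$, $Sp_{d-2}(q)$, $\Omega^\epsilon_{d-2}(q)$) have no index-two subgroups; without $q$ odd, your claim that the Levi abelianization is cyclic can fail, and the containment $U\subseteq[P,P]$ that underlies your reduction to the Levi would require a separate argument. The paper sidesteps all of this by writing $X_1=Q\rtimes Y$ and invoking $O^2(Q)=Q$ and $O^2(SL_e(q))=SL_e(q)$ directly, which is marginally cleaner but amounts to the same computation. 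Finally, a small slip of language: since $P/[P,P]$ is a quotient of $L/[L,L]$, the space $\Hom(X_1,(\F,+))$ is a \emph{subspace}, not a quotient, of $\Hom(L/[L,L],(\F,+))$; the dimension bound is unaffected.
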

  
\begin{proof}
As mentioned in the proof of Lemma \ref{r3-orbits}, $2|n$ implies that $q$ is odd.
We follow the proof of Lemma \ref{h1} and its notation. First, $\dim H^2(X,\F)$, which is the $2$-rank of the Schur multiplier of $X$, is $\leq 2$ if $X = P\Omega^+_d(q)$ with $4|d \geq 8$, and 
$\leq 1$ otherwise by \cite[Theorem 5.1.4]{KL}. Hence it suffices to show that 
\begin{equation}\label{r3-hom}
   \dim \Hom(X_1,(\F,+)) \leq 1,
\end{equation}  
where $X_1$ is the point stabilizer in $X$ of a point in $\Omega$. Without loss we may 
replace $X$ by its central cover $SL_d(q)$, $SU_d(q)$, $Sp_d(q)$, or $\Omega^\pm_d(q)$.
We also fix a basis $(e_1,\ldots,e_d)$ of the natural module $W$ of $X$.

Consider the case $X = SL_d(q)$. Then $X_1 = \Stab_X(\la e_1,e_2 \ra) = Q \rtimes Y$,
where $|Q|$ is a $q$-power and $Y = (SL_2(q) \times SL_{d-2}(q)) \rtimes C_{q-1}$. Since 
$q$ is odd, we have that $O^2(Q) = Q$ and $O^{2}(SL_e(q)) = SL_e(q)$ for any $e \geq 2$. 
Hence (\ref{r3-hom}) follows.

From now on we may assume $X \neq SL_d(q)$. We can then choose $e_1$ to be
singular and let $X_1 = \Stab_X(\la e_1\ra)$. 

Let $X = SU_d(q)$. Then $X_1 = Q \rtimes Y$, where $|Q|$ is a $q$-power and 
$Y = SU_{d-2}(q) \rtimes C_{q^2-1}$. As $d \geq 4$ and $q$ is odd, we see that 
$O^2(SU_{d-2}(q)) = SU_{d-2}(q)$, and so (\ref{r3-hom}) follows.  

Suppose $X = Sp_d(q)$. Then $X_1 = Q \rtimes Y$, where $|Q|$ is a $q$-power and 
$Y = Sp_{d-2}(q) \rtimes C_{q-1}$. As $d \geq 4$ and $q$ is odd, we see that 
$O^2(Sp_{d-2}(q)) = Sp_{d-2}(q)$, yielding (\ref{r3-hom}).

Suppose $X = \Omega^\epsilon_d(q)$. Then $X_1 = Q \rtimes Y$, where $|Q|$ is a $q$-power and 
$Y = \Omega^\epsilon_{d-2}(q) \rtimes C_{q-1}$. As $d \geq 5$ and $q$ is odd, we see that 
$O^2(\Omega^\epsilon_{d-2}(q)) = \Omega^\epsilon_{d-2}(q)$, and so we are done.
\end{proof}   

\begin{proof}[Proof of Theorem \ref{r3-main}]
Assume the contrary: $\Res^{\AAA_n}_X V$ is irreducible; in particular, 
$V$ is irreducible. Note that $X$ is not $2$-transitive on $\Omega$. Hence, by the main results of \cite{KS1, KS2} we must have that $p = 2$ or $3$. 

\smallskip
(i) In the case $X = PSp_4(2)' \cong \AAA_6$, we have $n =  15$, and so 
$\dim V \geq 13$ (see \cite{ModAt}), whereas the largest dimension $\bfr(X)$ of irreducible 
$\F X$-modules is at most $10$, cf. \cite{JLPW}. Thus $V$ is reducible over $X$. 
Next, in the cases $X = SU_4(2) \cong PSp_4(3)$, respectively $SL_4(2)$, $Sp_6(2)$, we have
$n \geq 27$, $35$, $63$, and $\bfr(X) \leq 81$, $70$, $512$, respectively, according to \cite{JLPW}.
Certainly, $\Res^{\AAA_n}_X V$ is reducible if $\dim V > \bfr(X)$. So we must have 
that $\dim V \leq\bfr(X)$. Since $\bfr(X) < (n^2-5n+2)/2$,  by \cite[Lemma 6.1]{GT1} we see
that $V \cong D^{(n-1,1)}$ is isomorphic to the heart of the natural permutation module $\F\Omega$.
As in the proof of Lemma \ref{small}, we conclude that $X$ is $2$-transitive on $\Omega$,
a contradiction.

\smallskip    
(ii) We may now assume that $X$ is not isomorphic to any of the groups considered in (i). 
Direct computation shows that $2^{(n-8)/4} > |X|^{1/2}$.
Since $\Res^{\AAA_n}_X V$ is irreducible, we must have that 
\begin{equation}\label{r3-dim}
  \dim(V) < 2^{(n-8)/4},
\end{equation}  
which implies by Propositions \ref{ext1} and \ref{ext2} that $V$ extends to $\SSS_n$. Applying the Main Theorem and Theorem 3.10 of \cite{KS1}, we again arrive at the contradiction that $X$ is 
$2$-transitive in the case $p = 3$, as well as in the case $p = 2 \nmid n$. 

Thus we have shown that $p = 2|n$. The upper bound (\ref{r3-dim}) implies by
Theorem \ref{hom} that $d_3(V) > d_1(V)$. Also, $f_1(X) = 1$, $f_2(X) = 2$ by Lemma
\ref{e22}, and $e_3(X) \geq h(X)+1$ by Lemmas \ref{r3-orbits} and \ref{r3-h1}. Now
we can apply Theorem \ref{reduction}.
\end{proof}

Non-standard rank $3$ permutation representations of finite classical groups, as well
as other primitive subgroups of $\AAA_n$, will be considered elsewhere.
    

\end{document}